\documentclass[11pt]{article}

\usepackage[T1]{fontenc}
\usepackage[utf8]{inputenc}

\usepackage[a4paper, margin=2cm]{geometry}
\usepackage{amsmath}
\usepackage{amssymb}
\usepackage{amsthm}
\usepackage{bbm}
\usepackage[title]{appendix}
\usepackage{soul,xcolor}
\usepackage{leftindex}

\newcommand{\dd}{\mathrm{d}}
\newcommand{\ddiv}{\mathrm{div\,}}

\newcommand{\D}{\mathbb{D}}
\newcommand{\mS}{\mathbb{S}}

\newtheorem{theorem}{Theorem}

\newtheorem{definition}[theorem]{Definition}
\newtheorem{lemma}[theorem]{Lemma}
\newtheorem{proposition}[theorem]{Proposition}
\newtheorem{remark}[theorem]{Remark}

\newtheorem{tw}[theorem]{Theorem}

\newtheorem*{prop*}{Proposition}

\numberwithin{equation}{section}
\numberwithin{theorem}{section}

\title{Weak solutions to the Navier-Stokes equations for steady compressible non-Newtonian fluids}
\author{Cosmin Burtea\footnote{
Universit\'e de Paris and Sorbonne Universit\'e , CNRS, IMJ-PRG, F-75006 Paris, France; {\tt cosmin.burtea@imj-prg.}}, Maja Szlenk\footnote{University of Warsaw, Faculty of Mathematics, Informatics and Mechanics, Banacha 2, 02-097 Warsaw, Poland, e-mail address: {\tt m.szlenk@uw.edu.pl}}}
\date{\today}

\begin{document}
\setstcolor{red}

\maketitle

\begin{abstract}
    We prove the existence of weak solutions for the steady Navier-Stokes system for compressible non-Newtonian fluids on a bounded, two- or three-dimensional domain. Assuming the viscous stress tensor is monotone satisfying a power-law growth with power $r$ and the pressure is given by $\varrho^\gamma$, we construct a solution provided that $r>\frac{3d}{d+2}$ and $\gamma$ is sufficiently large, depending on the values of $r$. Additionally, we also show the existence for time-discretized model for Herschel-Bulkley fluids, where the viscosity has a singular part.
\end{abstract}

\textbf{Keywords:} compressible Navier--Stokes equations, non-Newtonian fluids, weak solutions, power-law fluids

\section{Introduction}

We
investigate the existence of weak solutions for models for steady compressible
barotropic non-Newtonian fluids defined on a bounded $C^{2}$-domain $\Omega$.
The Navier-Stokes system of equations that govern the motion of such fluids
read
\begin{equation}
\left\{
\begin{aligned}
&\ddiv(\varrho u)  =0 \quad &\text{in } \Omega, \\
&\ddiv(\varrho u\otimes u)  -\ddiv\mathbb{S}(  \mathbb{D}u)  +\nabla p(\varrho)  =\varrho f+g \quad &\text{in } \Omega, \\
&u =0 \quad &\text{on }\partial\Omega,
\end{aligned}
\right.  \label{main}%
\end{equation}
where $\varrho=\varrho\left(  x\right)  \geq0$ represents the density of
fluid, $u=u\left(  x\right)  =\left(  u^{1}\left(  x\right)  ,...,u^{d}\left(
x\right)  \right)  \in\mathbb{R}^{d}$ represents the velocity of the fluid and the pressure is given by
\begin{equation}
p\left(  \varrho\right)  =a\varrho^{\gamma}\label{pressure}%
\end{equation}
for some $\gamma>1$, $a>0$ (without loss of generality we will assume $a=1$). The non-Newtonian aspect is encoded by the nonlinear form of the viscous stress tensor $\mS=\mS(\D u)$, where
\[ (\D u)_{i,j}=\frac{1}{2}(\partial_{x_j}u_i+\partial_{x_i}u_j) \]
is the symmetric gradient. We assume that $\mathbb{S}:\mathbb{R}_{sym}^{d\times d}\rightarrow\mathbb{R}_{sym}^{d\times
d}$ 
is a continuous function, which satisfies the following growth conditions 
\begin{equation}
\left\{
\begin{array}
[c]{r}%
\left\vert \mathbb{S}(A)\right\vert \leq C_{1}\left\vert A\right\vert
^{r-1},\\
C_{2}\left\vert A\right\vert ^{r}\leq S\left(  A\right)  :A
\end{array}
\right.  \label{growth}%
\end{equation}
for some $C_1,C_2>0$, and a monotonicity condition
\begin{equation}\label{monotonicity}
    C_3|A-B|^\ell \leq (\mS(A)-\mS(B)):(A-B), \quad C_3>0,
\end{equation}
for all symmetric matrices $A,B\in\mathbb{R}_{sym}^{d\times d}$ and some
$r\geq\ell>1$. We recall that the contraction between two
second order tensors is given by $A:B=a_{ij}b_{ij},$ where we use Einstein's
summation convention over repeated indices.

Typical examples that enter this framework include generic viscous stress
tensors of the form
\[
\mathbb{S}\left(  A\right)  =\mu\left(  \left\vert A\right\vert \right)
A+\lambda\left(  \left\vert \operatorname*{trace}A\right\vert \right)
\operatorname*{trace}A\mathbb{I\,}_{d}%
\]
where $\mathbb{I\,}_{d}$ is the identity tensor while $\mu$ and $\lambda$
which represent shear and bulk viscosities satisfying the growth conditions
\[
c_{1}|z|^{r-2}\leq\mu(|z|),\lambda(|z|)\leq c_{2}|z|^{r-2}.
\]
In particular, such choice of the viscosity parameters includes the power-law
fluids where the shear viscosity is given by
\[
\mu(|\mathbb{D}u|)=\mu_{0}|\mathbb{D}u|^{r-2},\lambda\left(  \left\vert
\operatorname{div}u\right\vert \right)  =\lambda_{0}\left\vert
\operatorname{div}u\right\vert ^{r-2},
\]
with $\mu_{0},\lambda_{0}>0$. We refer the readers to the paper
\cite{MalekRajagopal2010} for an extensive discussion on constitutive theory
for compressible fluids.

\subsection{A brief overview on the mathematical study of non-Newtonian
fluids}

\paragraph{Incompressible case.} The mathematical study of models for non-Newtonian fluids were thoroughly studied in the incompressible setting.
It was initiated by
O.A. Ladyzhenskaya \cite{Ladyzhenskaya1967,ladyzhenskaya1969mathematical}, who
studied the following system governing the evolution of the velocity field
$u=u\left(  t,x\right)  \in\mathbb{R}^{d}$ of an incompressible fluid filling
a bounded domain $\Omega\subset\mathbb{R}^{d}$:
\begin{equation}
\left\{
\begin{array}
[c]{r}%
\partial_{t}u+\operatorname{div}u\otimes u-\operatorname{div}\mathbb{S}+\nabla
p=f,\\
\operatorname{div}u=0,\\
u=0.
\end{array}%
\begin{array}
[c]{r}%
\text{in }\left(  0,\infty\right)  \times\Omega,\\
\text{in }\left(  0,\infty\right)  \times\Omega,\\
\text{on }\left(  0,\infty\right)  \times\partial\Omega.
\end{array}
\right.  \label{system}%
\end{equation}
In the above, the stress tensor satisfies the growth conditions (\ref{growth}) and for the monotonicity it was assumed that
\begin{equation}
c_{3}|A-B|^{r}+c_{4}|A-B|^{2}\leq(\mathbb{S}(A)-\mathbb{S}(B)):(A-B)
\label{mono_inc}%
\end{equation}
for
$A,B\in\mathbb{R}^{d\times d}$. In
\cite{Ladyzhenskaya1967,ladyzhenskaya1969mathematical} it was proven that there exist
weak solutions to $\left(  \text{\ref{system}}\right)  $ for the Dirichlet
boundary-value problem when $r\geq\frac{3d+2}{d+2}$. Moreover, the solutions are unique
if $r\geq\frac{d+2}{2}$ and $c_{4}>0$. Notice that the linear case of Newtonian
fluids, for which the growth conditions (\ref{growth}) are verified with $r=2$, is still an outstanding open problem
today. To cite P. Galdi \cite{galdi2011introduction} : \textquotedblleft%
\textit{what we do not know whether it is true for water, becomes certainly
true if we add to water a pinch of corn starch!}\textquotedblright.

Up to now, the early results of Ladyzhenskaya were improved in a series of papers in the following directions:

\underline{Existence :} the range of $r$ for which one can assure
existence of weak solutions has been extended, see for instance
\cite{Malek_et_al1993,wolf2007existence,Diening_et_al_2010}. As far as we are
aware, the lowest exponent for which existence of weak-solutions can be
ensured is
\[
r\geq\frac{2d}{d+2}%
\]
and was obtained in \cite{Diening_et_al_2010}, see also
\cite{frehse2003analysis} for the steady case.

\underline{Uniqueness :} the optimal requirement was obtained recently by
Bul{\'{\i}}{\v{c}}ek \textit{et al} \cite{bulivcek2019uniqueness} for
$r>\frac{3d+2}{d+2}$ and $c_{3},c_{4}>0$ in $\left(  \text{\ref{monotonicity}%
}\right)  $. For ill-posedness results, we refer to the recent paper
\cite{Burczak_et_al2021} where non-uniqueness results were established : for
$r<\frac{2d}{d+2}$ there exists infinitely many Leray--Hopf solutions while
for the case $r\in\left(  \frac{2d}{d+2},\frac{3d+2}{d+2}\right)  $
non-uniqueness of distributional solutions is showed.

\underline{Different forms of the stress tensor :} 
the particular example of other forms of considered stress tensors is the following:
\begin{equation}
\left\{
\begin{array}
[c]{r}%
\mathbb{S}=\left(  \tau_{\ast}+\nu\left\vert \mathbb{D}u\right\vert
^{r-1}\right)  \dfrac{\mathbb{D}u}{\left\vert \mathbb{D}u\right\vert }\text{
if }\left\vert \mathbb{D}u\right\vert >0,\\
\left\vert \mathbb{S}\right\vert \leq\tau_{\ast}\text{ if }\left\vert
\mathbb{D}u\right\vert =0,
\end{array}
\right.  \label{Herschel-Bulkley}%
\end{equation}
where $\tau_{\ast}\nu>0$ are given constants. The additional term $\frac{\D u}{|\D u|}$ corresponds to the viscoplastic effects. Fluids, for which internal
efforts are modeled by $\left(  \text{\ref{Herschel-Bulkley}}\right) $, are
referred to as Herschel--Bulkley fluids, see for instance
\cite{duvant2012inequalities,MalekRajagopal2010,blechta2020classification}. We refer to \cite{malek2005herschel} for an
early result regarding the steady case of the NS equations $\left(
\text{\ref{system}}\right)  $. The viscous stress tensor $\mathbb{S}$ defined
by $\left(  \text{\ref{Herschel-Bulkley}}\right)  $ can be expressed in an implicit way by the following relation:
\[
\nu\left\vert \mathbb{D}u\right\vert ^{r-1}\left(  \mathbb{\tau}^{\ast
}+\left(  \left\vert \mathbb{S}\right\vert -\tau^{\ast}\right)  ^{+}\right)
\frac{\mathbb{D}u}{\left\vert \mathbb{D}u\right\vert }-\left(  \left\vert
\mathbb{S}\right\vert -\tau^{\ast}\right)  ^{+}\mathbb{S}=0,
\]
see \cite{bulicek2012unsteady}. By now, there is an impressive body of work
devoted to implicitly defined stress tensors, see for instance
\cite{eberlein2012existence}, \cite{bulicek2012unsteady},
\cite{bulivcek2023unsteady} and the references cited therein.

Another recent results concern the situation where the stress tensor depends explicitly on time or space, i.e. $\mathbb{S=S}\left(  t,x,\mathbb{D}%
u\right) $. Such models were considered for example in \cite{Gwiazda2010} and \cite{bulivcek2023non}. For other models, involving for example heat-conductivity or inhomogenity, we refer to
\cite{Frehse2010,wroblewska2012unsteady,FeireislLuMalek2015,AbbatielloBulicekKaplicky2022}, to cite just a few.

\paragraph{Compressible case.}
In the case of compressible
fluids much less is known. Recall that the Navier-Stokes system of equations
that govern the motion of compressible barotropic non-Newtonian fluids reads
\begin{equation}
\left\{
\begin{array}
[c]{l}%
\partial_{t}\varrho+\operatorname{div}\left(  \varrho u\right)  =0,\text{ }\\
\partial_{t}\left(  \varrho u\right)  +\operatorname{div}\left(  \varrho
u\otimes u\right)  -\operatorname{div}\mathbb{S}+\nabla p\left(
\varrho\right)  =\rho f+g,
\end{array}%
\begin{array}
[c]{c}%
\text{in }\left(  0,\infty\right)  \times\Omega,\\
\text{in }\left(  0,\infty\right)  \times\Omega,
\end{array}
\right.  \label{NavierStokesCompressible}%
\end{equation}
where $\varrho=\varrho\left(  t,x\right)  \geq0$ represents the density of
fluid, $u=u\left(  t,x\right)  \in\mathbb{R}^{d}$ represents the velocity of
the fluid while $p=p(\varrho)$ is the pressure. The strategy to obtain weak solutions is to perform a limit passage of a regularized equation, but the proof that a suitable limit of approximate solutions is indeed a weak solution for $\left(
\text{\ref{NavierStokesCompressible}}\right)  $ is much more involved than in
the incompressible case. This is due to the fact that the uniform bounds, provided on the sequence of approximate densities by the energy associated with the system, are only in some Lebesgue space.
\textit{Of course, one of the most subtle questions to be answered in the
proof of existence of weak-solutions for }$\left(
\text{\ref{NavierStokesCompressible}}\right)  $ \textit{is how to provide
control for the oscillations of sequence approximating the density}.

Regarding the case of Newtonian isotropic tensors i.e.
\[
\mathbb{S}\left(  \mathbb{D}u\right)  =2\mu\mathbb{D}u+\lambda
\operatorname{div}u\mathbb{I}_{d}%
\]
with $\mu,\lambda>0$ constant, is intimately linked to the mathematical
properties of the effective viscous flux which were exploited by P.L. Lions
\cite{Lions1998} and later by E. Feireisl \cite{Feireisl2001a}. The algebraic
structure of the system drastically changes in the non-Newtonian case or even
in the case of Newtonian anisotropic viscous stress tensors. For the former
case, we refer the reader to the recent results \cite{BreschJabin2018,BreschBurtea2023,BreschBurtea2021} or for simplified models for
compressible fluids \cite{BreschBurtea2020}.

All results that we are aware of regarding compressible non-Newtonian flows
are obtained under some structural assumptions on the viscous
stress tensor that ensure that the divergence of the velocity field belongs or
"almost" belongs to $L^{\infty}$ with respect to the space variable.

The first results were obtained by A.E. Mamontov
\cite{Mamontov1999,Mamontov1999_2}. In his work, the viscous stress tensor is such that
ensures exponentially integrability of the gradient of the vector field. This
is enough to control the propagation of oscillation, using the results for the transport equation obtained by A.V.\ Kazhikhov
and A.E. Mamontov in \cite{Kazhikhov_Mamontov_1998}. Regarding viscoplastic
behavior, we refer to the papers \cite{BasovShelukhin1999}, \cite{Mamontov2007}.

Another interesting result is the one obtained by E. Feireisl, X. Liao and J.
Mal\'{e}k \cite{feireisl2015global} for viscous stress tensors of the form
\[
\mathbb{S}=2\mu_{0}\left(  1+\left\vert \mathbb{D}^{d}u\right\vert
^{2}\right)  ^{\frac{r-2}{2}}\mathbb{D}^{d}u+\frac{b\operatorname{div}%
u}{\left(  1-b^{a}\left\vert \operatorname{div}u\right\vert ^{a}\right)
^{\frac{1}{a}}}\mathbb{I}_{3}%
\]
where $\mu_{0},b,a>0$ and $r\geq\frac{11}{5}$ in three dimensions. The singular
character of $\mathbb{S}$ ensures that the energy naturally associated with
$\left(  \text{\ref{NavierStokesCompressible}}\right)  $ provides the bound
$\operatorname{div}u\in L_{t}^{\infty}L_{x}^{\infty}$. 

Let us also mention the recent paper
\cite{pokorny2022weak}, dealing with
\[
\mathbb{S}=2\left(  \mu_{0}+\mu\left(  \left\vert \mathbb{D}u\right\vert
\right)  \right)  \mathbb{D}u+\lambda\left(  \left\vert \operatorname{div}%
u\right\vert \right)  \operatorname{div}u\mathbb{I}_{d}%
\]
with%
\[
0\leq\mu\left(  z\right)  ,\lambda\left(  z\right)  \leq\frac{C}{z}%
\]
for all $z>0$. The specific form of the viscous stress tensor ensures that one
can recover that the effective viscous flux is bounded in $L_{t}^{\infty}BMO.$ This again is enough to identify the weak and strong limits of the pressure.

We point out that an interesting strategy for providing control for the
oscillations of sequence approximating the density was proposed in the paper
by Zhikov and Pastukhova \cite{ZhikovPastukhova2009}. However, there seems to be a gap
in the authors' argument, already signaled in
\cite{feireisl2015global}, that to our knowledge was not filled. Therefore,
the question of existence for $\left(  \text{\ref{NavierStokesCompressible}%
}\right)  $ for non-Newtonian power-law fluids with $\gamma-$law pressure
i.e.
\[
\mathbb{S}=\mu\left\vert \mathbb{D}u\right\vert ^{r-2}\mathbb{D}%
u+\lambda\left\vert \operatorname{div}u\right\vert ^{r-2}\operatorname{div}%
u\mathbb{I\,}_{d},\text{ }\mu,\lambda>0,\text{ }p\left(  \varrho\right)
=a\varrho^{\gamma}%
\]
is still open.

Results, concerning in particular weak-strong uniqueness, were also obtained in the theory of dissipative solutions
\cite{Abbatiello_et_al2020,Basaric2022} which are solutions verifying $\left(
\text{\ref{NavierStokesCompressible}}\right)  $ up to the divergence of a
positive tensor.

\subsection{Main results}

First, let us specify
what we mean by weak solution for $\left(  \text{\ref{main}}\right)  $.

\begin{definition}
\label{Definition}Consider $d\in\left\{  2,3\right\}  ,$ $r,\gamma>1$ such
that
\begin{equation}
d>r>\frac{3d}{d+2}\text{ and }\gamma>\frac{d\left(  r-1\right)  }%
{(d+2)r-3d}\label{restriction_r_gamma_1}%
\end{equation}
or%
\begin{equation}
d\leq r\text{ and }\gamma>1.\label{restriction_r_gamma_2}%
\end{equation}
A pair $\left(  \varrho,u\right)  \in L^{\frac{r\gamma}{r-1}}\left(
\Omega\right)  \times(W_{0}^{1,r}\left(  \Omega\right)  )^{d}$ is called a 
weak solution to $\left(  \text{\ref{main}}\right)  $, if
\[
\int_{\Omega}
\varrho u\cdot\nabla\varphi\;\mathrm{d}x=0\]
and
\[ \int_{\Omega}
\left( -\varrho u\otimes u+\mathbb{S}(  \mathbb{D}u)\right):\nabla\psi
-p(\varrho)\ddiv\psi \;\mathrm{d}x= \displaystyle\int_{\Omega}
(\varrho f+g)\cdot\psi\;\mathrm{d}x,
\]
for all $\varphi\in\mathcal{C}^{1}\left(  \overline{\Omega}\right)  $ and
$\psi=\left(  \psi_{1},...,\psi_{d}\right)  \in(W_{0}^{1,r}\left(
\Omega\right)  )^{d}.$
\end{definition}

 The
restriction of $\mathcal{C}^{1}(\mathbb{R}^{d})$ functions (respectively of $\mathcal{C}^{\infty}\left(  \mathbb{R}^d\right)  $ functions) is denoted by $\mathcal{C}^{1}\left(  \overline{\Omega}\right)  $ (respectively by $\mathcal{C}^{\infty}\left(  \overline{\Omega}\right)  $).

\begin{remark}
In view of Proposition \ref{prolonged_continuity} from the Appendix B, a proof
of which can be found in \cite{NovotnyStraskraba2004}, see Lemma $3.2.$ page
$158$, distributional solutions for the steady continuity equation
\[
\operatorname{div}\left(  \varrho u\right)  =0\text{ in }\mathcal{D}^{\prime
}\left(  \Omega\right)  ,
\]
for, say $\left(  \varrho,u\right)  \in L^{\frac{r\gamma}{r-1}}\left(
\Omega\right)  \times(W_{0}^{1,r}\left(  \Omega\right)  )^{d}$ have the
property that the functions $\left(  \tilde{\varrho},\tilde{u}\right)  \in
L^{\frac{r}{r-1}}(\mathbb{R}^{d})\times(W_{0}^{1,r}(\mathbb{R}^{d}))^{d}$
obtained by prolonging $\left(  \varrho,u\right)  $ by $0$ in the exterior of
$\Omega$ verify%
\[
\operatorname{div}\left(  \tilde{\varrho}\tilde{u}\right)  =0\text{ in
}\mathcal{D}^{\prime}(\mathbb{R}^{d}).
\]
This is of course equivalent to asking that
\begin{equation}%
%TCIMACRO{\dint _{\Omega}}%
%BeginExpansion
{\displaystyle\int_{\Omega}}
%EndExpansion
\varrho u\cdot\nabla\varphi \;\mathrm{d}x=0, \quad
\text{for all }\varphi\in\mathcal{C}%
^{1}\left(  \overline{\Omega}\right)  .\label{renorm_up_to_boundary}%
\end{equation}

\end{remark}

Our main result concerns the existence of weak solutions to system
$\left(  \text{\ref{main}}\right)  $.

\begin{theorem}
\label{th_st}Consider $d\in\left\{  2,3\right\}$ and $r,\gamma>1$ satisfying (\ref{restriction_r_gamma_1}) or (\ref{restriction_r_gamma_2}).
Let $\mathbb{S}:\mathbb{R}_{\mathrm{sym}}^{d\times d}\rightarrow\mathbb{R}_{\mathrm{sym}}^{d\times d}$ be a
continuous function satisfying the conditions (\ref{growth})-(\ref{monotonicity}) and $p$ given by (\ref{pressure}). Fix $M>0$ and $f,g\in(L^{\infty
}\left(  \Omega\right)  )^{d}$. Then, there exists a weak solution $\left(
\varrho,u\right)  \in L^{\frac{r\gamma}{r-1}}\left(  \Omega\right)
\times(W_{0}^{1,r}\left(  \Omega\right)  )^{d}$ to (\ref{main}), verifying
the following estimates:%
\begin{equation}
\begin{aligned}
\int_\Omega \varrho\;\dd x &= M, \\
\int_{\Omega}\mathbb{S(D}u):\D u \;\mathrm{d}x &\leq C\left(  M,\Omega
,\left\Vert f,g\right\Vert _{L^{\infty}}\right)  ,\\
\left\Vert \varrho\right\Vert _{L^{\frac{r\gamma}{r-1}}} &\leq C\left(
M,\Omega,\left\Vert f,g\right\Vert _{L^{\infty}}\right)  .
\end{aligned}
\label{estimates}%
\end{equation}

\end{theorem}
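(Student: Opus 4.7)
\textbf{Proof plan for Theorem \ref{th_st}.} The plan is to build a weak solution by a multi-level approximation scheme, derive uniform estimates, and then pass to the limit by exploiting the monotonicity of $\mathbb{S}$ and a carefully designed argument for the density.

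First, I would construct approximate solutions $(\varrho_{n},u_{n})$ by regularizing the system: add an artificial viscosity term $\varepsilon\Delta\varrho - \varepsilon\varrho$ (with a no-flux boundary condition) to the continuity equation to turn it into a strictly elliptic problem for $\varrho$, and add an artificial pressure contribution $\delta\varrho^{\beta}$ with $\beta\gg\gamma$ to the momentum equation. At fixed $\varepsilon,\delta>0$, one obtains approximate solutions through a Galerkin procedure on $u$ combined with a fixed-point argument on $\varrho$ (solving the regularized continuity equation for given $u$). Mass conservation $\int_{\Omega}\varrho\,\dd x=M$ is built in by the construction.

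Next, testing the momentum equation with $u_{n}$ itself and using the lower growth bound in (\ref{growth}) produces the basic energy inequality and, hence, a uniform bound $\|u_{n}\|_{W_{0}^{1,r}}\leq C(M,\Omega,\|f,g\|_{L^{\infty}})$ once pressure contributions are shown to be controlled by the right-hand side. To upgrade the density integrability from the natural $L^{\gamma}$-bound coming from the pressure term, I would test the momentum equation with a Bogovski\u{\i}-type corrector $\psi=\mathcal{B}\bigl(\varrho_{n}^{\theta}-\tfrac{1}{|\Omega|}\int\varrho_{n}^{\theta}\bigr)$ with $\theta=\tfrac{\gamma}{r-1}$; the Sobolev embedding for $u_{n}\in W^{1,r}$ and H\"older bookkeeping then yield, precisely under the restrictions (\ref{restriction_r_gamma_1})--(\ref{restriction_r_gamma_2}), the uniform bound $\|\varrho_{n}\|_{L^{r\gamma/(r-1)}}\leq C$ together with the estimate on $\int\mathbb{S}(\mathbb{D}u_{n}):\mathbb{D}u_{n}$ claimed in (\ref{estimates}).

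I would then pass to the limits $\varepsilon\to 0$ and $\delta\to 0$ successively. Up to a subsequence, $u_{n}\rightharpoonup u$ weakly in $W_{0}^{1,r}$, hence strongly in $L^{q}$ for $q<\tfrac{dr}{d-r}$ by Rellich--Kondrachov, while $\varrho_{n}\rightharpoonup\varrho$ weakly in $L^{r\gamma/(r-1)}$. The convective term $\varrho_{n}u_{n}\otimes u_{n}$ passes to the limit in $L^{1}$ by a product-of-weak-times-strong argument, which is exactly where the exponents in (\ref{restriction_r_gamma_1}) are sharp. The nonlinear viscous stress $\mathbb{S}(\mathbb{D}u_{n})$ converges weakly to some $\overline{\mathbb{S}}$ in $L^{r/(r-1)}$; its identification with $\mathbb{S}(\mathbb{D}u)$ I would obtain via Minty's monotonicity trick based on (\ref{monotonicity}): by testing the approximate momentum equation with $u_{n}$ and the limit equation with $u$, and combining with the inequality $(\mathbb{S}(\mathbb{D}u_{n})-\mathbb{S}(B))\!:\!(\mathbb{D}u_{n}-B)\geq 0$ for arbitrary $B\in L^{r}$, one extracts $\overline{\mathbb{S}}=\mathbb{S}(\mathbb{D}u)$.

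The genuine obstacle, as emphasized in the introduction, is the identification of the weak limit of the pressure $\varrho_{n}^{\gamma}$, equivalently, ruling out density oscillations. Since in the present non-Newtonian setting $\ddiv u$ is not known to be essentially bounded, the classical Lions--Feireisl effective viscous flux identity cannot be applied verbatim. The plan is to exploit the monotonicity assumption (\ref{monotonicity}) jointly with a renormalized-continuity argument for $(\tilde\varrho,\tilde u)$ extended by zero to $\mathbb{R}^{d}$ (justified by the remark after Definition \ref{Definition}), combined with a Bresch--Jabin style weighted compactness argument as in \cite{BreschJabin2018,BreschBurtea2023,BreschBurtea2021}: one controls a suitable nonlocal modulus of continuity for $\varrho_{n}$ using the structural properties of $\mathbb{S}$ and the $W^{1,r}$ bound on $u_{n}$, and deduces a.e.\ convergence of $\varrho_{n}$ to $\varrho$. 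Once $\varrho_{n}\to\varrho$ almost everywhere, the uniform bound $\|\varrho_{n}\|_{L^{r\gamma/(r-1)}}\leq C$ and Vitali's theorem give $\varrho_{n}^{\gamma}\to\varrho^{\gamma}$ in $L^{1}$, closing the limit passage and yielding a weak solution satisfying (\ref{estimates}). I expect this last step, controlling the oscillations of $\varrho_{n}$ without a bound on $\ddiv u$, to be by far the most delicate part of the argument.
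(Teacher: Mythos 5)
Your construction and uniform-estimate steps are essentially the paper's: regularized continuity equation, fixed point at the approximate level, energy inequality, and the Bogovski\u{\i} test function $\mathcal{B}\bigl(\varrho^{\gamma/(r-1)}-\text{mean}\bigr)$, which is exactly how (\ref{estimates}) is obtained under (\ref{restriction_r_gamma_1})--(\ref{restriction_r_gamma_2}). The gap is in the limit passage, and it is twofold. First, the Minty argument you propose for identifying $\overline{\mathbb{S}}=\mathbb{S}(\mathbb{D}u)$ cannot be closed on its own: testing the approximate momentum equation with $u_{\varepsilon}$ produces the term $\int_{\Omega}\varrho_{\varepsilon}^{\gamma}\operatorname{div}u_{\varepsilon}\,\mathrm{d}x$, while testing the limit equation with $u$ produces $\int_{\Omega}\overline{\varrho^{\gamma}}\operatorname{div}u\,\mathrm{d}x$, and comparing these two quantities is precisely the density-oscillation problem you postpone to the next step. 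In the Newtonian case this comparison is done through the effective viscous flux; here no such identity is available, so the identification of $\overline{\mathbb{S}}$ and of $\overline{\varrho^{\gamma}}$ are coupled and cannot be decoupled in the order you propose. Second, the step you rely on to break this coupling --- ``a Bresch--Jabin style weighted compactness argument'' giving a.e.\ convergence of $\varrho_{\varepsilon}$ --- is not known to work for a general monotone power-law stress: those techniques exploit the linear (Newtonian, possibly anisotropic) structure of the viscous term and quantitative information on the velocity or on the effective flux that is unavailable when $\mathbb{S}$ only satisfies (\ref{growth})--(\ref{monotonicity}). Indeed, the paper stresses that all previous compressible non-Newtonian results require structural assumptions forcing $\operatorname{div}u$ to be (almost) bounded, and that the higher-integrability route of the Newtonian anisotropic case is out of reach here; invoking these references at this point is a placeholder for the genuinely open difficulty, not a proof.

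What the paper actually does at this point is different in both order and mechanism, and this is the missing idea. One first derives a local energy identity for the limit objects (Proposition \ref{Prop_identity}): subtracting the energy of the limit system from the limit of the approximate energies yields
\begin{equation*}
\frac{-1}{\gamma-1}\int_{\Omega}\overline{\varrho^{\gamma}}\,u\cdot\nabla\varphi\,\mathrm{d}x
+\int_{\Omega}\overline{\varrho^{\gamma}}\,\operatorname{div}u\,\varphi\,\mathrm{d}x
+\int_{\Omega}\varphi\,\mathrm{d}\bigl[\overline{\mathbb{S}:\mathbb{D}u}-\overline{\mathbb{S}}:\mathbb{D}u\bigr]
+\frac{4}{\gamma}\int_{\Omega}\varphi\,\mathrm{d}\Theta=0 ,
\end{equation*}
where the bracket is a nonnegative defect \emph{measure}. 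Then a measure-theoretic lemma (Proposition \ref{propo_limit}), based on mollification, Friedrichs' commutator lemma, multiplication by $(\overline{\varrho^{\gamma}}+h)^{1/\gamma-1}$ and Egorov's theorem on a compact set $\Omega_{\eta}$ with $|\Omega\setminus\Omega_{\eta}|\le\eta$, shows that this defect measure vanishes on $\Omega_{\eta}$. Combined with (\ref{monotonicity}) this gives strong convergence of $\mathbb{D}u_{\varepsilon}$ in $L^{p}$, $p<r$, \emph{first}; only afterwards is the density recovered, by writing $\varphi\varrho_{\varepsilon}^{\gamma}=\Delta^{-1}\operatorname{div}(\dots)$ from the momentum equation and using the now strong convergence of $\mathbb{S}(\mathbb{D}u_{\varepsilon})$ to get $\varrho_{\varepsilon}^{\gamma}\to\overline{\varrho^{\gamma}}$ in $L^{1}$ and hence $\overline{\varrho^{\gamma}}=\varrho^{\gamma}$. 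Without this (or an equivalent replacement for your Minty-plus-compactness scheme), the proposal does not close.
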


Moreover, weak solutions constructed above are stable in the following sense:

\begin{tw}\label{Stability}
    Fix $M>0$ and for any $\varepsilon>0$ consider $f_\varepsilon,g_\varepsilon,f,g\in (L^\infty(\Omega))^d$ such that
    \[ Q=\sup_{\varepsilon>0}\|f_\varepsilon,g_\varepsilon\|_{L^\infty}<\infty, \quad (f_\varepsilon,g_\varepsilon) \rightharpoonup (f,g) \quad \text{weakly-$\ast$ in} \quad (L^\infty(\Omega))^d. \]
    Let $(\varrho_\varepsilon,u_\varepsilon)$ be a sequence of weak solutions to (\ref{main}) in the sense of Definition \ref{Definition} with source terms $f_\varepsilon$ and $g_\varepsilon$, satisfying 
    \[ \int_\Omega \varrho_\varepsilon \;\dd x = M \]
    and the uniform bounds
    \begin{equation}\label{uniform} \|\varrho_\varepsilon\|_{L^{\frac{r}{r-1}\gamma}} + \|u_\varepsilon\|_{W^{1,r}} \leq C(M,Q,\Omega), \end{equation}
    Then there exists $(\varrho,u)\in L^{\frac{r\gamma}{r-1}}(\Omega)\times(W_0^{1,r}(\Omega))^d$ such that up to a subsequence (indexed again by $\varepsilon$) we have the strong convergence
    \[\begin{aligned}
        \varrho_\varepsilon \to \varrho \quad &\text{strongly in} \quad L^{\frac{r\gamma}{r-1}}(\Omega), \\
        \D u_\varepsilon \to \D u \quad &\text{strongly in} \quad L^p(\Omega) \quad \text{for any} \quad p<r.
    \end{aligned}\]
    In particular, the limit $(\varrho,u)$ is the weak solution to (\ref{main}) with source terms $f,g$.
\end{tw}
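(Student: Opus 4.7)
The plan is to follow the compactness scheme of Lions--Feireisl, adapted to a general non-Newtonian stress. \emph{Step 1 (Weak limits and easy terms).} By \eqref{uniform} and reflexivity, extract a subsequence with $\varrho_\varepsilon\rightharpoonup\varrho$ in $L^{r\gamma/(r-1)}$, $u_\varepsilon\rightharpoonup u$ in $W_0^{1,r}$, and, by the Rellich--Kondrachov embedding, $u_\varepsilon\to u$ in every subcritical $L^q$. The growth bound \eqref{growth} yields $\mS(\D u_\varepsilon)\rightharpoonup \overline{\mS}$ in $L^{r/(r-1)}$, and $\varrho_\varepsilon^\gamma\rightharpoonup \overline{\varrho^\gamma}$ weakly. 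The restrictions \eqref{restriction_r_gamma_1}--\eqref{restriction_r_gamma_2} are tailored so that $\varrho_\varepsilon u_\varepsilon\otimes u_\varepsilon$ is equi-integrable in some $L^s$, $s>1$; combining weak convergence of $\varrho_\varepsilon u_\varepsilon$ with strong convergence of $u_\varepsilon$ gives $\varrho_\varepsilon u_\varepsilon\otimes u_\varepsilon\rightharpoonup \varrho u\otimes u$. Together with the weak-$\ast$ convergence of $f_\varepsilon,g_\varepsilon$ tested against the strongly-convergent $\varrho_\varepsilon$, passing to the limit in the weak formulations produces $\ddiv(\varrho u)=0$ and a momentum equation for the limit with $\overline{\mS}$, $\overline{\varrho^\gamma}$ replacing $\mS(\D u)$, $\varrho^\gamma$.

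\emph{Step 2 (Strong convergence of the density).} The heart of the argument is an effective viscous flux identity. Test the momentum equations, for $(\varrho_\varepsilon,u_\varepsilon)$ and for the limit, against the Bogovski\u{\i}-type liftings $\mathcal{B}[\varrho_\varepsilon^\theta - \langle\varrho_\varepsilon^\theta\rangle]$ and $\mathcal{B}[\varrho^\theta - \langle\varrho^\theta\rangle]$, for a small $\theta>0$ chosen so that every integrand is summable under \eqref{restriction_r_gamma_1}--\eqref{restriction_r_gamma_2}. Subtracting, using the Div--Curl lemma, commutator estimates for the Riesz kernels $\mathcal{R}=\nabla^2\Delta^{-1}$, and the renormalization property of the steady continuity equation (applicable since the bound on $\gamma$ ensures sufficient integrability of $\varrho_\varepsilon$), one arrives at an effective viscous flux identity of the schematic form
\[
\overline{\varrho^\gamma\,\varrho^\theta} - \overline{\mS(\D u):\mathcal{R}[\varrho^\theta]} = \overline{\varrho^\gamma}\,\varrho^\theta - \overline{\mS}:\mathcal{R}[\varrho^\theta].
\]
Combined with monotonicity of $\mS$ and strict convexity of $t\mapsto t^\gamma$, this forces $\overline{\varrho^\gamma}=\varrho^\gamma$ a.e., whence $\varrho_\varepsilon\to\varrho$ strongly in $L^{r\gamma/(r-1)}$ by the Brezis--Lieb lemma (or equivalently by the classical $\varrho\log\varrho$ defect argument).

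\emph{Step 3 (Strong convergence of $\D u_\varepsilon$).} Once the density is strongly compact, $p(\varrho_\varepsilon)\to p(\varrho)$ in a Lebesgue norm and the source terms converge. Testing the $\varepsilon$-momentum equation with $u_\varepsilon$ and the limit equation with $u$, and comparing, gives
\[
\lim_{\varepsilon\to 0}\int_\Omega \mS(\D u_\varepsilon):\D u_\varepsilon\,\dd x \;=\; \int_\Omega \overline{\mS}:\D u\,\dd x,
\]
whence, by weak convergence of the cross-terms,
\[
\int_\Omega \bigl(\mS(\D u_\varepsilon)-\mS(\D u)\bigr):\bigl(\D u_\varepsilon-\D u\bigr)\,\dd x \;\longrightarrow\; 0.
\]
The monotonicity \eqref{monotonicity} then gives $\D u_\varepsilon\to \D u$ strongly in $L^\ell$, hence a.e.\ along a subsequence; Vitali together with the uniform $L^r$ bound upgrade this to strong convergence in every $L^p$ with $p<r$, and identify $\overline{\mS}=\mS(\D u)$ a.e. The limit pair $(\varrho,u)$ is therefore a weak solution of \eqref{main} with source terms $(f,g)$.

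The main obstacle is Step~2. Unlike the Newtonian situation, there is no structural commutator identity for a general non-Newtonian $\mS$, so one has to close the effective viscous flux relation via the more delicate commutator estimates of Bresch--Jabin / Bresch--Burtea type; the admissible range \eqref{restriction_r_gamma_1}--\eqref{restriction_r_gamma_2} is exactly what these estimates (together with the integrability needed for renormalization) require.
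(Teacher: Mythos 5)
Your Steps 1 and 3 are essentially sound (Step 3 is the standard Minty/energy argument, and it matches the spirit of the paper's final step for the velocity), but Step 2 — the heart of your proof — has a genuine gap, and it is precisely the obstruction that this paper is designed to circumvent. The effective viscous flux identity you write down cannot be closed for a general monotone non-Newtonian $\mathbb{S}$. In the Newtonian case the stress term produced by testing with $\mathcal{B}[\varrho_\varepsilon^\theta-\langle\varrho_\varepsilon^\theta\rangle]$ reduces, thanks to the \emph{linear} algebraic identity $\Delta^{-1}\operatorname{div}\operatorname{div}\mathbb{S}(\mathbb{D}u)=(2\mu+\lambda)\operatorname{div}u$, to a local product $\operatorname{div}u_\varepsilon\,\varrho_\varepsilon^\theta$, to which compensated compactness applies. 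For a nonlinear $\mathbb{S}$ the corresponding term is the weak limit of $\mathbb{S}(\mathbb{D}u_\varepsilon):\nabla^2\Delta^{-1}[\varrho_\varepsilon^\theta]$, a product of two sequences that both converge only weakly and carry no div--curl structure; its limit is simply not determined, and monotonicity gives no sign for the resulting defect. Invoking ``commutator estimates of Bresch--Jabin / Bresch--Burtea type'' does not repair this: those results concern linear (anisotropic or ``almost isotropic'') Newtonian stresses, or require higher integrability of $\nabla u$, which the authors explicitly state is out of reach in the present power-law framework — indeed the introduction stresses that the open character of the problem stems exactly from the loss of this algebraic structure. So your density-first strategy stalls before $\overline{\varrho^\gamma}=\varrho^\gamma$ can be concluded, and Step 3, which presupposes density compactness, cannot start.

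The paper's route is the reverse of yours and avoids the effective viscous flux altogether. One first derives a local energy-type identity for the defect (Proposition \ref{Prop_identity}): subtracting the energy equality of the limit system from the limit of the local energy equalities yields, in weak form,
\[
\frac{-1}{\gamma-1}\int_{\Omega}\overline{\varrho^{\gamma}}\,u\cdot\nabla\varphi\,\mathrm{d}x
+\int_{\Omega}\overline{\varrho^{\gamma}}\,\mathrm{div}\,u\,\varphi\,\mathrm{d}x
+\int_{\Omega}\varphi\,\mathrm{d}\bigl[\overline{\mathbb{S}:\mathbb{D}u}-\overline{\mathbb{S}}:\mathbb{D}u\bigr]=0 .
\]
Then a measure-theoretic argument (Proposition \ref{propo_limit}): mollify, multiply by $\tfrac{1}{\gamma}(\overline{\varrho^\gamma}_\alpha+h)^{1/\gamma-1}$, use Friedrichs' commutator lemma and Egorov's theorem to find, for every $\eta>0$, a compact $\Omega_\eta$ with $|\Omega\setminus\Omega_\eta|\le\eta$ on which the positive defect measure $\overline{\mathbb{S}:\mathbb{D}u}-\overline{\mathbb{S}}:\mathbb{D}u$ vanishes. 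Monotonicity then gives $\mathbb{D}u_\varepsilon\to\mathbb{D}u$ strongly in $L^p$, $p<r$, \emph{first}; only afterwards is the density recovered, not by Riesz-commutator machinery but by reading the momentum equation as a Poisson equation for $\varphi\varrho_\varepsilon^\gamma$ whose right-hand side now converges strongly (because $\mathbb{S}(\mathbb{D}u_\varepsilon)$ does), which yields $\varrho_\varepsilon^\gamma\to\overline{\varrho^\gamma}$ strongly and hence $\overline{\varrho^\gamma}=\varrho^\gamma$. If you want to salvage your write-up, you must replace Step 2 by an argument of this type (or supply a genuinely new way to pass to the limit in the stress--Riesz coupling), since the Lions--Feireisl flux identity is not available here.
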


Our last main result deals with existence of weak solutions for
Herschel--Bulkley fluids. More precisely, with a slight abuse of notations
regarding $\mathbb{S}$, we obtain that

\begin{theorem}
\label{Viscoplastic_is_fantastic}Consider $d\in\left\{  2,3\right\}  ,$
$r,\gamma>1$ verifying $\left(  \text{\ref{restriction_r_gamma_1}}\right)  $
or $\left(  \text{\ref{restriction_r_gamma_2}}\right)  $. In addition, assume that $\gamma\in (1,2]$. Fix $\alpha>0$, $\beta\geq 0$, 
$f,g\in(L^{\infty}\left(  \Omega\right)  )^{d}$ and a nonnegative function $\check{\varrho}\in L^\infty(\Omega)$. Then, there exists a weak solution
$\left(  \varrho,u,\mathbb{S}\right)  \in L^{\frac{r\gamma}{r-1}}\left(
\Omega\right)  \times(W_{0}^{1,r}\left(  \Omega\right)  )^{d}\times
(L^{\frac{r}{r-1}}\left(  \Omega\right)  )^{d\times d}$ to the system%
\begin{equation}
\left\{
\begin{array}
[c]{ll}%
\alpha\varrho+\operatorname{div}\left(  \varrho u\right)  =\alpha\check\varrho \text{ } & \text{in }\Omega,\\
\beta\varrho u+\operatorname{div}\left(  \varrho u\otimes u\right)  -\operatorname{div}%
\mathbb{S}+\nabla\varrho^{\gamma}=\rho f+g & \text{in }\Omega,\\
\mathbb{S}=\left(  \tau_{\ast}+\nu\left\vert \mathbb{D}u\right\vert
^{r-1}\right)  \dfrac{\mathbb{D}u}{\left\vert \mathbb{D}u\right\vert } &
\text{in }\left\{  \left\vert \mathbb{D}u\right\vert >0\right\}  ,\\
\left\vert \mathbb{S}\right\vert \leq\tau_{\ast} & \text{in }\left\{
\left\vert \mathbb{D}u\right\vert =0\right\}  ,
\end{array}
\right.  \label{visco-plastic}%
\end{equation}
verifying the following estimates:%
\begin{equation}
\left\Vert \varrho\right\Vert _{L^{\frac{r\gamma}{r-1}}}+\left\Vert \nabla
u\right\Vert _{L^{r}}+\left\Vert \mathbb{S}\right\Vert _{L^{\frac{r}{r-1}}%
}\leq C\left(  M,\Omega,\left\Vert f,g\right\Vert _{L^{\infty}}\right)  .
\label{estimate_viscoplastic}%
\end{equation}

\end{theorem}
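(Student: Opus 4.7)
The plan is to regularize the viscoplastic stress tensor to remove its singularity at $\mathbb{D}u=0$, thereby reducing the problem to a version of Theorem~\ref{th_st} with additional lower-order terms $\alpha\varrho$ and $\beta\varrho u$, and then send the regularization parameter to zero relying on the compactness provided by Theorem~\ref{Stability}. The main novelty compared to Theorem~\ref{th_st} is the identification of the limit stress, which requires care on the plug region $\{|\mathbb{D}u|=0\}$.

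\textbf{Regularized problem.} For $\delta\in(0,1)$, define
\[
\mathbb{S}_\delta(A)=\tau_\ast\frac{A}{\sqrt{\delta^2+|A|^2}}+\nu|A|^{r-2}A.
\]
The first summand is the gradient of the smooth convex potential $A\mapsto\tau_\ast\sqrt{\delta^2+|A|^2}$, hence monotone; the second is strictly $r$-monotone. Together they satisfy (\ref{growth}) in the relaxed form $|\mathbb{S}_\delta(A)|\leq C(1+|A|^{r-1})$, which is harmless on a bounded domain, and (\ref{monotonicity}) with $\ell=r$ and constants uniform in $\delta$. The extra zeroth-order terms favor the a priori estimates: testing the continuity equation by a renormalization of $\varrho^{\gamma-1}$ yields $\int_\Omega\varrho^\gamma\mathrm{div}\,u\,\mathrm{d}x=\frac{\alpha\gamma}{\gamma-1}\int_\Omega(\check\varrho\varrho^{\gamma-1}-\varrho^\gamma)\,\mathrm{d}x$, which combined with the momentum test by $u$ (noting that $\int_\Omega\mathrm{div}(\varrho u\otimes u)\cdot u=\tfrac{\alpha}{2}\int_\Omega(\check\varrho-\varrho)|u|^2\,\mathrm{d}x$ after using the continuity equation) and a standard Bogovskii-type improvement, as in the proof of Theorem~\ref{th_st}, closes the estimate; the condition $\gamma\in(1,2]$ enters to balance the convective and pressure contributions. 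A verbatim adaptation of the construction behind Theorem~\ref{th_st} then produces a weak solution $(\varrho_\delta,u_\delta)$ to the system with $\mathbb{S}$ replaced by $\mathbb{S}_\delta$, satisfying
\[
\|\varrho_\delta\|_{L^{r\gamma/(r-1)}}+\|u_\delta\|_{W^{1,r}}+\|\mathbb{S}_\delta(\mathbb{D}u_\delta)\|_{L^{r/(r-1)}}\leq C
\]
with a constant independent of $\delta$.

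\textbf{Passage to the limit.} The compactness scheme of Theorem~\ref{Stability}, which remains valid with the extra lower-order terms (they are weakly continuous once strong convergence of $\varrho_\delta$ and $u_\delta$ is at hand), yields, along a subsequence, $\varrho_\delta\to\varrho$ strongly in $L^{r\gamma/(r-1)}$ and $\mathbb{D}u_\delta\to\mathbb{D}u$ strongly in $L^p$ for every $p<r$. Let $\mathbb{S}$ be the weak $L^{r/(r-1)}$-limit of $\mathbb{S}_\delta(\mathbb{D}u_\delta)$. Extracting a further subsequence with $\mathbb{D}u_\delta\to\mathbb{D}u$ almost everywhere, at any point where $\mathbb{D}u\neq 0$ the regularization is asymptotically inactive, so
\[
\mathbb{S}_\delta(\mathbb{D}u_\delta)\to\bigl(\tau_\ast+\nu|\mathbb{D}u|^{r-1}\bigr)\frac{\mathbb{D}u}{|\mathbb{D}u|}
\]
pointwise, and Vitali's theorem promotes this to convergence in $L^p$ for $p<r/(r-1)$, identifying $\mathbb{S}$ on $\{|\mathbb{D}u|>0\}$.

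\textbf{Main obstacle.} The genuinely new difficulty is the identification of $\mathbb{S}$ on the plug region $\{|\mathbb{D}u|=0\}$, where $\mathbb{S}_\delta(\mathbb{D}u_\delta)$ admits no unambiguous pointwise limit. The uniform bound $|\mathbb{S}_\delta(\mathbb{D}u_\delta)|\leq\tau_\ast+\nu|\mathbb{D}u_\delta|^{r-1}$ together with the strong $L^{r/(r-1)}$-convergence of $|\mathbb{D}u_\delta|^{r-1}$, which vanishes on $\{|\mathbb{D}u|=0\}$, combined with lower semicontinuity under weak convergence, yields $|\mathbb{S}|\leq\tau_\ast$ almost everywhere on $\{|\mathbb{D}u|=0\}$, i.e.\ the fourth line of (\ref{visco-plastic}). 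To confirm the full constitutive relation one argues via Minty's trick: the uniform monotonicity of $\mathbb{S}_\delta$ gives
\[
\int_\Omega\bigl(\mathbb{S}_\delta(\mathbb{D}u_\delta)-\mathbb{S}_\delta(B)\bigr):(\mathbb{D}u_\delta-B)\,\mathrm{d}x\geq 0
\]
for every symmetric test field $B\in L^r(\Omega)$, and the strong convergence of $\mathbb{D}u_\delta$ together with the pointwise limit of $\mathbb{S}_\delta(B)$ allows one to pass to the limit and characterize $\mathbb{S}\in\partial\Phi(\mathbb{D}u)$ almost everywhere, with convex potential $\Phi(A)=\tau_\ast|A|+\tfrac{\nu}{r}|A|^r$, which is equivalent to the piecewise prescription in (\ref{visco-plastic}). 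The bound (\ref{estimate_viscoplastic}) is then inherited by lower semicontinuity from the uniform estimates on $(\varrho_\delta,u_\delta,\mathbb{S}_\delta(\mathbb{D}u_\delta))$.
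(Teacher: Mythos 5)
Your regularization of the singular part and your identification of the limit stress on $\{|\mathbb{D}u|>0\}$, as well as the bound $|\mathbb{S}|\leq\tau_*$ on the plug set, are in the spirit of the paper's proof (which regularizes via $\mathbb{P}_{\varepsilon}=\tau^{*}g_{\varepsilon}(|\mathbb{D}u|)\mathbb{D}u$ following \cite{malek2005herschel}). The genuine gap is the step where you assert that ``the compactness scheme of Theorem \ref{Stability} remains valid with the extra lower-order terms'' and thereby obtain strong convergence of $\varrho_\delta$ and $\mathbb{D}u_\delta$. Theorem \ref{Stability} is proved for the system (\ref{main}) with one fixed continuous tensor satisfying (\ref{growth})--(\ref{monotonicity}), and neither of its two key ingredients transfers to your setting as stated. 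First, Proposition \ref{Prop_identity} is derived from the renormalized steady continuity equation $\mathrm{div\,}(\varrho u)=0$; with $\alpha\varrho+\mathrm{div\,}(\varrho u)=\alpha\check\varrho$ the identity acquires the extra terms $\alpha(\overline{\varrho^{\gamma}}-\varrho^{\gamma})$ and $-\alpha(\overline{\varrho^{\gamma-1}}-\varrho^{\gamma-1})\check\varrho$ (Proposition \ref{Prop_identity_viscoplastic}), and the positivity of the resulting measure, which is what Proposition \ref{propo_limit} needs, is exactly where $\gamma\in(1,2]$ (concavity of $t\mapsto t^{\gamma-1}$) and $\check\varrho\geq0$ enter; your explanation that $\gamma\leq 2$ ``balances the convective and pressure contributions'' misattributes this, and treating $\alpha\varrho$, $\beta\varrho u$ as harmless perturbations misses that $\alpha>0$ is structurally essential (the paper explicitly notes $\alpha=0$ is out of reach). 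Second, in the proof of Theorem \ref{th_st}/\ref{Stability} the strong convergence of the density is recovered from the Newtonian-potential representation (\ref{p}), which requires strong $L^{p}$ convergence of the viscous stress, obtained there from a.e.\ convergence of $\mathbb{D}u$ and continuity of the \emph{fixed} function $\mathbb{S}$. In your scheme the constitutive function changes with $\delta$ and has no pointwise limit on $\{\mathbb{D}u=0\}$, so $\mathbb{S}_\delta(\mathbb{D}u_\delta)$ converges only weakly-$*$ there and the Calder\'on--Zygmund step collapses; hence the strong convergence of $\varrho_\delta$ you rely on (and with it the weak continuity of the lower-order terms) cannot be obtained by citing Theorem \ref{Stability}. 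The paper's way out is precisely the $\alpha$-term: the positive measure contains $\alpha(\overline{\varrho^{\gamma}}-\varrho^{\gamma})\,\mathrm{d}x$, so Proposition \ref{propo_limit} annihilates simultaneously the stress defect and the pressure defect on large compact sets, yielding strong convergence of both $\mathbb{D}u_\delta$ and $\varrho_\delta$ with no elliptic-regularity argument for the pressure.

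Two smaller points. Your Minty-trick step needs the convergence of $\int_\Omega\mathbb{S}_\delta(\mathbb{D}u_\delta):\mathbb{D}u_\delta\,\mathrm{d}x$, but weak $L^{\frac{r}{r-1}}$ convergence of the stress paired with strong $L^{p}$ convergence of $\mathbb{D}u_\delta$ only for $p<r$ does not allow passage to the limit in this product; this is again the energy-defect issue that the measure argument resolves, and once a.e.\ convergence of $\mathbb{D}u_\delta$ is available the Minty argument is not needed (as in the paper, pointwise identification on $\{|\mathbb{D}u|>0\}$ plus the $L^\infty$ weak-$*$ bound on the plug set suffices). Also, the claim that $\mathbb{S}_\delta$ satisfies (\ref{monotonicity}) with $\ell=r$ uniformly in $\delta$ is not correct for $r<2$, which is allowed under (\ref{restriction_r_gamma_1}); the monotonicity actually used must be formulated accordingly.
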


System $\left(  \text{\ref{visco-plastic}}\right)  $ can be viewed as an
implicit time-discretization of the non-steady case and, besides being
interesting in itself, it can represent a first step towards the construction
of solutions in the non-steady case. Unfortunately, the
case $\alpha=0$ in $\left(  \text{\ref{visco-plastic}}\right)  $ seems to be out of reach within our method.

\subsection{Main ideas of the proof}

Weak solutions to (\ref{main}) are obtained as the limit of solutions $\left(  \varrho
_{\varepsilon},u_{\varepsilon}\right)  $ to a regularized variant of $\left(
\text{\ref{main}}\right)  $, verifying
\[
\left\Vert \varrho_{\varepsilon}\right\Vert _{L^{\frac{r\gamma}{r-1}}%
}+\left\Vert \nabla u_{\varepsilon}\right\Vert _{L^{r}}\leq C
\]
uniformly with respect to $\varepsilon$.
As it is customary, we will denote by $\overline{\left(  \cdot\right)  }$ weak
limits of functions of $\varrho_{\varepsilon}$ or $\nabla u_{\varepsilon}$. By
straightforward compactness techniques one can establish that%
\[
\left\{
\begin{array}
[c]{l}%
\operatorname{div}\left(  \varrho u\right)  =0,\text{ }\\
\operatorname{div}\left(  \varrho u\otimes u\right)  -\operatorname{div}%
\overline{\mathbb{S}\left(  \mathbb{D}u\right)  }+\nabla\overline
{\varrho^{\gamma}}=\rho f+g,
\end{array}%
\begin{array}
[c]{r}%
\text{in }\Omega,\\
\text{in }\Omega,
\end{array}
\right.
\]
\textit{Of course, one of the most subtle points is to justify that }%
\[
\overline{\mathbb{S}\left(  \mathbb{D}u\right)  }=\mathbb{S}\left(
\mathbb{D}u\right)  \text{ and }\overline{\varrho^{\gamma}}=\varrho^{\gamma
}\text{ a.e. in }\Omega.
\]
\textit{This is achieved by combining the monotonicity technique with the
theory of renormalized solutions for the steady continuity equation.}

The key ingredient is to obtain a certain identity, which will allow comparison between weak and strong limits. First,
one writes the equation for the local energy associated with the system (it is done by testing the momentum equation by $\varphi
u_{\varepsilon}$ with $\varphi\in\mathcal{C}^{\infty}\left(  \mathbb{R}%
^{d}\right)  $) and passes to the limit. Then, we subtract the
local energy associated to the limit system. From this we obtain%
\[
\frac{1}{\gamma-1}\operatorname{div}\left(  \overline{\varrho^{\gamma}%
}u\right)  +\overline{\varrho^{\gamma}}\operatorname{div}u+\overline
{\mathbb{S}\left(  \mathbb{D}u\right)  :\mathbb{D}u}-\overline{\mathbb{S}%
\left(  \mathbb{D}u\right)  }:\mathbb{D}u=0,
\]
see Proposition \ref{Prop_identity} for a more precise statement.
"Multiplying" the above equation by $\frac{1}{\gamma}\overline{\varrho
^{\gamma}}^{\frac{1}{\gamma}-1}$ we end up with%
\begin{equation}
\frac{1}{\gamma-1}\operatorname{div}(\overline{\varrho^{\gamma}}^{\frac
{1}{\gamma}}u)+\left(  \overline{\mathbb{S}\left(  \mathbb{D}u\right)
:\mathbb{D}u}-\overline{\mathbb{S}\left(  \mathbb{D}u\right)  }:\mathbb{D}%
u\right)  \frac{1}{\gamma}\overline{\varrho^{\gamma}}^{\frac{1}{\gamma}%
-1}=0.\label{identity_control_defect_mesure}%
\end{equation}
By integration, one obtains%
\[
\int_{\Omega}\left(  \overline{\mathbb{S}\left(  \mathbb{D}u\right)
:\mathbb{D}u}-\overline{\mathbb{S}\left(  \mathbb{D}u\right)  }:\mathbb{D}%
u\right)  \overline{\varrho^{\gamma}}^{\frac{1}{\gamma}-1}\mathrm{d}x=0,
\]
and owing to the monotonicity we end up with
\[
\overline{\mathbb{S}\left(  \mathbb{D}u\right)  :\mathbb{D}u}=\overline
{\mathbb{S}\left(  \mathbb{D}u\right)  }:\mathbb{D}u,
\]
which is sufficient in order to obtain strong convergence (up to a
subsequence) for $\nabla u_{\varepsilon}$ towards $\nabla u$. Then, one hopes
to find some ad-hoc argument in order to recover the strong convergence of the
density. This strategy was used in \cite{BreschBurtea2021} for anisotropic
Newtonian viscous stress tensors (linear dependence w.r.t. $\mathbb{D}u$)
although a special form of the latter was required in order to carry out the proof.

Of course, the reader may have observed that there are a few informal
manipulations in order to arrive at $\left(
\text{\ref{identity_control_defect_mesure}}\right)  $. First of all, since
$\gamma>1$, the term $\frac{1}{\gamma}\left(  \overline{\varrho^{\gamma}%
}\right)  ^{\frac{1}{\gamma}-1}$ is singular on the set
$\left\{  \overline{\varrho^{\gamma}}=0\right\}.$ However,
one can bypass this problem by multiplying with $\frac
{1}{\gamma}\left(  \overline{\varrho^{\gamma}}+h\right)
^{\frac{1}{\gamma}-1}$ for positive $h>0$ and afterwards take the limit when
$h\to 0.$ Perhaps an even more serious obstacle towards the rigorous
justification of $\left(  \text{\ref{identity_control_defect_mesure}}\right)
$ seems to be that $\overline{\mathbb{S}\left(  \mathbb{D}u\right)  :\D
u}-\mathbb{S}\left(  \mathbb{D}u\right)  :\D u$ is a measure and the
product with $\left(  \overline{\varrho^{\gamma}}\right)
^{\frac{1}{\gamma}-1}$ does not make sense, the latter being only in some Lebesgue space. 

In \cite{BreschBurtea2021} the authors dealt with this obstacle by proving higher integrability estimates for $\nabla u$. This was possible, assuming the fluid was "almost isotropic". This unfortunately seems to be out of reach in the framework of this paper. \textit{Instead, we apply a certain measure-theoretical argument, based on Egorov's theorem. It allows to perform all necessary estimates on a slightly smaller set $\Omega_\eta$, on which $(\overline{\varrho^\gamma}+h)^{1/\gamma-1}$ is already continuous.} The core of the argument is the following (see Proposition \ref{propo_limit}):
\begin{prop*}
    Consider $\gamma>1,$ $u\in(W_{0}^{1,r}\left(
\Omega\right)  )^{d}$, $b\in L^{\frac{r}{r-1}}\left(  \Omega\right)$ nonnegative and a positive measure $\mathcal{R}\in\mathcal{M}\left(  \Omega\right)  $ such that the identity
\[
\frac{-1}{\gamma-1}\int_{\Omega}b u\nabla\varphi\text{ }\mathrm{d}%
x+\int_{\Omega}b\mathrm{div\,}u\cdot\varphi\text{ }\mathrm{d}%
x+\int_{\Omega}\varphi\text{ }\mathrm{d}\mathcal{R}=0\text{,}
\]
holds for all $\varphi\in\mathcal{C}^{1}(\overline{\Omega})$. Then, for all
$\eta>0$ there exists a compact set $\Omega_{\eta}\subset\Omega$ such that
$\left\vert \Omega\backslash\Omega_{\eta}\right\vert \leq\eta$ and $\mathcal{R}\left(
\Omega_{\eta}\right)  =0$.
\end{prop*}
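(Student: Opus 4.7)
The plan is to mimic, rigorously on a small subset, the formal renormalisation obtained by multiplying the identity by $(b+h)^{1/\gamma-1}$ with $h>0$. Using the chain rule on $F(b)=(b+h)^{1/\gamma}$ together with $u\cdot\nabla b=\mathrm{div}(bu)-b\,\mathrm{div}\,u$ and the proposition's equation, one formally derives
\[
\mathrm{div}\bigl((b+h)^{1/\gamma}u\bigr)
=h(b+h)^{1/\gamma-1}\mathrm{div}\,u-\frac{\gamma-1}{\gamma}(b+h)^{1/\gamma-1}\mathcal{R}.
\]
Since $u|_{\partial\Omega}=0$, integrating over $\Omega$ kills the left-hand side; the bound $(b+h)^{1/\gamma-1}\leq h^{1/\gamma-1}$ then gives
\[
\int_{\Omega}(b+h)^{1/\gamma-1}\,d\mathcal{R}\leq Ch^{1/\gamma}\|\mathrm{div}\,u\|_{L^{1}},
\]
and monotone convergence as $h\downarrow 0$, with the convention $b^{1/\gamma-1}=+\infty$ on $\{b=0\}$, would force $\mathcal{R}\equiv 0$. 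The computation is purely formal: since $b\in L^{r/(r-1)}$ has no Sobolev regularity, neither the chain rule nor the pointwise product of $(b+h)^{1/\gamma-1}$ with $\mathcal{R}$ is a priori meaningful.

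To legitimise the formal step on a large subset of $\Omega$, I would combine three ingredients. First, I would mollify the proposition's equation by $\omega_{\delta}$ to obtain $\frac{1}{\gamma-1}\mathrm{div}(b_{\delta}u)+b_{\delta}\mathrm{div}\,u+\mathcal{R}*\omega_{\delta}=r_{\delta}$, where $r_{\delta}$ is a DiPerna--Lions commutator which, under the integrability pair $b\in L^{r'}$ and $\nabla u\in L^{r}$, vanishes in $L^{1}_{\mathrm{loc}}$ as $\delta\to 0$. Second, I would invoke a Lusin/Egorov-type argument: for any $\eta>0$ there exists a compact $\Omega_{\eta}\subset\Omega$ with $|\Omega\setminus\Omega_{\eta}|\leq\eta$ on which $b$ is continuous, so that after a standard inner-neighbourhood correction the mollifications satisfy $b_{\delta}\to b$ uniformly on $\Omega_{\eta}$. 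Third, I would localise by multiplying by a smooth cut-off $\chi\in C_c^{\infty}(\Omega)$ with $\chi\equiv 1$ on $\Omega_{\eta}$ and supported in a slightly enlarged neighbourhood. At the smooth level the chain rule on $F(b_{\delta})$ is legitimate and reproduces the formal identity up to the error $F'(b_{\delta})r_{\delta}$; multiplying by $\chi$ and integrating gives a bound for $\int\chi(b_{\delta}+h)^{1/\gamma-1}(\mathcal{R}*\omega_{\delta})\,dx$ in terms of $h^{1/\gamma}$, the commutator, and a boundary-type term $\int(b_{\delta}+h)^{1/\gamma}u\cdot\nabla\chi$.

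The hard part is the limit $\delta\to 0$ in the measure-valued term $\int\chi(b_{\delta}+h)^{1/\gamma-1}(\mathcal{R}*\omega_{\delta})\,dx$: for a generic Radon measure $\mathcal{R}$ one cannot pass to the limit from mere pointwise a.e.\ convergence, but the uniform convergence of $(b_{\delta}+h)^{1/\gamma-1}$ on $\mathrm{supp}\,\chi\subset\Omega_{\eta}$ secured by the Lusin step is precisely what is needed. Once this convergence is justified, taking first $\delta\to 0$ and then $h\downarrow 0$ in the localised identity yields $\int_{\Omega_{\eta}}(b+h)^{1/\gamma-1}\,d\mathcal{R}\to 0$, and monotone convergence together with $(b+h)^{1/\gamma-1}\uparrow b^{1/\gamma-1}$ (with $0^{1/\gamma-1}=+\infty$) forces $\mathcal{R}(\Omega_{\eta})=0$. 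The localisation to $\Omega_{\eta}$ rather than to all of $\Omega$ is the direct manifestation of the Lusin step and explains why the proposition's conclusion is the slightly weaker singularity statement, rather than the formally expected $\mathcal{R}\equiv 0$ everywhere.
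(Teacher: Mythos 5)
Your overall plan (mollify, renormalize with $(b_\delta+h)^{1/\gamma-1}$, kill the commutator by Friedrichs' lemma, restrict to a good set, then let $h\downarrow 0$) is the paper's strategy, but two of your steps fail as written. The first is the cut-off: after integrating by parts, your localized identity contains the term $\int_\Omega (b_\delta+h)^{1/\gamma}\,u\cdot\nabla\chi\,\mathrm{d}x$, which you list but never control. Since $\nabla\chi$ is supported in an interior annulus around $\Omega_\eta$, this term converges (first $\delta\to0$, then $h\to0$) to $\int_\Omega b^{1/\gamma}\,u\cdot\nabla\chi\,\mathrm{d}x$, an $O(1)$ quantity with no sign and no smallness in any of your parameters; hence your claimed conclusion $\int_{\Omega_\eta}(b+h)^{1/\gamma-1}\,\mathrm{d}\mathcal{R}\le C h^{1/\gamma}+o(1)$ does not follow. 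No cut-off is needed at all: because the hypothesis holds for every $\varphi\in\mathcal{C}^1(\overline\Omega)$ and $u\in (W_0^{1,r}(\Omega))^d$, one extends $b,u,\mathcal{R}$ by zero to $\mathbb{R}^d$, so the equation holds in $\mathcal{E}'(\mathbb{R}^d)$ and the mollified, renormalized identity can be integrated over all of $\Omega$, making the divergence term vanish exactly; this is what the paper does, and it is also why the restriction to $\Omega_\eta$ in the conclusion has nothing to do with localizing the identity.

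The second gap is the measure-theoretic core, which your sketch does not actually resolve. (i) Lusin continuity of $b$ on $\Omega_\eta$ does not give $b_\delta\to b$ uniformly on $\Omega_\eta$: the mollification averages values of $b$ taken outside $\Omega_\eta$, where $b$ is uncontrolled; the correct tool is Egorov applied to an a.e.\ convergent sequence, and the paper applies it to $\omega_\alpha\ast(b_\alpha+h)^{1/\gamma-1}\to(b+h)^{1/\gamma-1}$. (ii) Even granting uniform convergence on $\Omega_\eta$, your limit passage in $\int\chi(b_\delta+h)^{1/\gamma-1}(\mathcal{R}\ast\omega_\delta)\,\mathrm{d}x$ is not justified: $\mathrm{supp}\,\chi$ is a neighbourhood of $\Omega_\eta$, not a subset of it, and the measures $(\mathcal{R}\ast\omega_\delta)\,\mathrm{d}x$ converge only weakly-$\ast$; for the compact set $\Omega_\eta$ the portmanteau inequality goes the wrong way (one only gets $\limsup_\delta\mu_\delta(\Omega_\eta)\le\mathcal{R}(\Omega_\eta)$), so the lower bound $\liminf_\delta\int_{\Omega_\eta}(b_\delta+h)^{1/\gamma-1}\,\mathrm{d}\mu_\delta\ge\int_{\Omega_\eta}(b+h)^{1/\gamma-1}\,\mathrm{d}\mathcal{R}$ that you need can fail (mollified mass sitting on $\Omega_\eta$ may leak out). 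The paper never moves the measure: by symmetry of $\omega$ and Fubini, $\int(b_\alpha+h)^{1/\gamma-1}(\omega_\alpha\ast\mathcal{R})\,\mathrm{d}x=\int\omega_\alpha\ast\bigl[(b_\alpha+h)^{1/\gamma-1}\bigr]\,\mathrm{d}\mathcal{R}$, and the Egorov-uniform pointwise bound $(n+1)^{1/\gamma-1}\mathbbm{1}_{\{b<n\}\cap\Omega_\eta}\le\omega_\alpha\ast(b_\alpha+h)^{1/\gamma-1}+\eta_1$ is integrated against the \emph{fixed} positive measure $\mathcal{R}$, yielding $\mathcal{R}(\{b<n\}\cap\Omega_\eta)\le C h^{1/\gamma}$ directly, whence the conclusion by $h\to0$ and $n\to\infty$. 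This sublevel-set formulation also sidesteps your final monotone-convergence step, where $\int_{\Omega_\eta}(b+h)^{1/\gamma-1}\,\mathrm{d}\mathcal{R}$ is not even well defined a priori, since $b$ is only an equivalence class in $L^{\frac{r}{r-1}}$ while $\mathcal{R}$ may be singular with respect to Lebesgue measure.
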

By doing so we are able to recover that $\nabla u_\varepsilon\to\nabla u$ strongly in $L^q(\Omega)$ for all $q<r$. Then, to recover convergence of the density we
use the momentum equation to see that the pressure verifies the Poisson
equation with a strongly convergent term on the right hand side. In particular,
Theorem \ref{th_st} generalizes the main result in \cite{BreschBurtea2021} in
the sense that we do not require anymore the "almost isotropic" condition.

The (informal) identity $\left(  \text{\ref{identity_control_defect_mesure}%
}\right)  $ seems to have been first noticed by Zhikov and Pastukhova in
\cite{ZhikovPastukhova2009} for the non-steady case where one has instead%
\[
\frac{\gamma}{\gamma-1}\left(  \partial_{t}\left(  \overline{\varrho^{\gamma}%
}-\varrho^{\gamma}\right)  ^{\frac{1}{\gamma}}+\operatorname{div}\left(
\left(  \overline{\varrho^{\gamma}}-\varrho^{\gamma}\right)  ^{\frac{1}%
{\gamma}}u\right)  \right)  +\left(  \overline{\mathbb{S}\left(
\mathbb{D}u\right)  :\nabla u}-\mathbb{S}\left(  \mathbb{D}u\right)  :\nabla
u\right)\left(  \overline{\varrho^{\gamma}}-\varrho^{\gamma
}\right)  ^{\frac{1}{\gamma}-1}=0.
\]
However, as it was mentioned in the introduction, there seems to be a gap in their proof. This problem was also signaled in \cite{feireisl2015global} and up to our knowledge
has not been filled. This comes from the fact that in the evolution case one
does not have any information on the time derivative for the velocity $u$ and
therefore in the energy equation the term in the energy equation which comes
from multiplying the viscous stress tensor with the velocity
$\operatorname{div}\left(  \mathbb{S}\left(  \mathbb{D}u\right)  u\right)  $
raises serious problems. Independently, one of the
authors of this paper and D. Bresch rediscovered this formula later in
\cite{BreschBurtea2020} and used it to prove existence for the anisotropic
quasi-stationary case (transport+Stokes). We also cite the recent result
\cite{BreschBurtea2023} where the authors prove existence of
intermediate-regularity weak solutions.

The rest of this paper unfolds as follows. Section
\ref{Proof of 1st main Theorem} is devoted to the proof of Theorem
\ref{th_st}. The proof is divided into three main parts. In a first step,
which makes the object of Section \ref{Approximate problems}, we introduce the
approximate system. It can be solved by standard nonlinear analysis
techniques, which are presented in the Appendix. In Section \ref{Uniform_Estimates}, we provide uniform estimates
for solutions of the (reduced) approximate systems. The more subtle stability
arguments are presented in Section \ref{Limit passage with epsilon}. This
section ends with the arguments necessary in order to conclude the proof of
Theorem \ref{Stability}. In Section \ref{Proof of Theorem Viscoplastic} we prove Theorem
\ref{Viscoplastic_is_fantastic}. Appendix A contains the proof of existence
for solutions for the approximate system. In Appendix B, we gather non-linear
analysis tools that we use all along the paper. All but one of the results
gathered in this appendix are borrowed from \cite{NovotnyStraskraba2004}.

\section{Proofs of Theorem \ref{th_st} and Theorem \ref{Stability}%
\label{Proof of 1st main Theorem}}

\subsection{Approximate problems\label{Approximate problems}}

A weak solution for system $\left(  \text{\ref{main}}\right)  $ is obtained as
the limit of solutions of the following system when the regularization
parameters $\alpha,\delta,\varepsilon,\eta\in\left(  0,1\right)  $ tend to
$0$:%
\begin{equation}
\left\{
\begin{array}
[c]{l}%
-\varepsilon\Delta\varrho+\eta\left(  \varrho-\dfrac{M}{\left\vert
\Omega\right\vert }\right)  +\operatorname{div}\left(  \varrho\omega_{\delta
}\ast T_{\delta}\left(  u\right)  \right)  =0,\\
\operatorname{div}\left(  \varrho\left(  \omega_{\delta}\ast T_{\delta
}(u)\right)  \otimes u\right)  -\operatorname{div}\mathbb{S}(\D u)-\alpha
\operatorname{div}\left(  \left\vert \nabla u\right\vert ^{q-2}\nabla
u\right)  +T_{\delta}(\omega_{\delta}\ast\widetilde{\nabla\varrho^{\gamma}})\\
=\frac{\eta}{2}\varrho u-\varepsilon\nabla u\nabla\varrho+\varrho f+g,\\
\varrho\geq0,\text{ }%
%TCIMACRO{\dint _{\Omega}}%
%BeginExpansion
{\displaystyle\int_{\Omega}}
%EndExpansion
\varrho\mathrm{d}x=M,\\
\frac{\partial\varrho}{\partial\overrightarrow{n}}=0\text{ },u=0\text{ on
}\partial\Omega
\end{array}
\right.  \label{Approximare_eps_delta_eta_alpha_main}%
\end{equation}
Above,
\[
\omega_{\delta}\left(  \cdot\right)  =\frac{1}{\delta^{3}}\omega\left(
\frac{1}{\delta}\cdot\right)  ,\text{ }T_{\delta}(v)\left(  x\right)
=\boldsymbol{1}_{\Omega_{2\delta}}\left(  x\right)  v\left(  x\right)  ,
\]
where $\omega\in\mathcal{D(}\mathbb{R}^{d})$ is a smooth, non-negative, radial
function which is compactly supported in the unit ball centered at the origin
and with integral $1$, while%
\[
\Omega_{2\delta}=\left\{  x\in\Omega:d\left(  x,\partial\Omega\right)
\geq2\delta\right\}  .
\]
By the $\widetilde{\cdot}$ we denote the trivial extension on $\mathbb{R}^{d}$
by $0$. Of course, we have that for any $u\in\left(  W^{1,q}\left(
\Omega\right)  \right)  ^{d},$ $\omega_{\delta}\ast T_{\delta}\left(
u\right)  \in W_{0}^{1,q}\left(  \Omega\right)  $.

The existence of solutions to the above system is a consequence of the
Leray-Schauder fixed point and the estimates in Sobolev spaces. More
precisely, we have that

\begin{theorem}
\label{Existence_4_levels_main}Consider $\alpha,\delta,\varepsilon,\eta
\in\left(  0,1\right)  $, $d\in\left\{  2,3\right\}  ,$ $\gamma>1$,
$q>d$ and $r>\frac{d}{2}$. Let $\mathbb{S}:\mathbb{R}%
_{sym}^{d\times d}\rightarrow\mathbb{R}_{sym}^{d\times d}$ be a continuous
function verifying the conditions (\ref{growth})-(\ref{monotonicity}). For all $M>0$ and $f,g\in(L^{\infty}\left(  \Omega\right)  )^{d}%
$, there exists a solution $\left(  \varrho,u\right)  \in W^{2,2}\left(
\Omega\right)  \times(W_{0}^{1,q}\left(  \Omega\right)  )^{d}$ to $\left(
\text{\ref{Approximare_eps_delta_eta_alpha_main}}\right)  $ verifying the
identities
\begin{equation}\label{mass_conservation}
    \int_\Omega\varrho\;\dd x = M,
\end{equation}
and
\begin{align}
\int_{\Omega}\mathbb{S}(\D u) &  :\D u\text{ }\mathrm{d}x+\frac{4\varepsilon
}{\gamma}%
%TCIMACRO{\dint _{\Omega}}%
%BeginExpansion
{\displaystyle\int_{\Omega}}
%EndExpansion
\left\vert \nabla\varrho^{\frac{\gamma}{2}}\right\vert ^{2}\mathrm{d}%
x+\alpha\int_{\Omega}\left\vert \nabla u\right\vert ^{q}\mathrm{d}%
x+\frac{\gamma\eta}{2\left(  \gamma-1\right)  }%
%TCIMACRO{\dint _{\Omega}}%
%BeginExpansion
{\displaystyle\int_{\Omega}}
%EndExpansion
\varrho^{\gamma}\mathrm{d}x+\frac{\eta M}{\left\vert \Omega\right\vert }%
\int_{\Omega}\frac{\left\vert u\right\vert ^{2}}{2}\mathrm{d}x\nonumber\\
&  =\frac{\gamma\eta}{\left(  \gamma-1\right)  }\frac{M}{\left\vert
\Omega\right\vert }%
%TCIMACRO{\dint _{\Omega}}%
%BeginExpansion
{\displaystyle\int_{\Omega}}
%EndExpansion
\varrho^{\gamma-1}\mathrm{d}x+\int_{\Omega}\left(  \varrho f+g\right)  u\text{
}\mathrm{d}x,\label{identity_1}%
\end{align}
along with the following estimates:%
\begin{equation}
\begin{aligned}
\int_{\Omega}\mathbb{S}(\D u):\D u\;\mathrm{d}x+ & \frac{4\varepsilon
}{\gamma}\int_{\Omega}
\left\vert \nabla\varrho^{\frac{\gamma}{2}}\right\vert ^{2}\mathrm{d}x 
+ \alpha\int_{\Omega}\left\vert \nabla u\right\vert ^{q}\mathrm{d}%
x\\
&+\frac{\gamma\eta}{2\left(  \gamma-1\right)  }\int_{\Omega}
\varrho^{\gamma}\;\mathrm{d}x+\frac{\eta M}{\left\vert \Omega\right\vert }%
\int_{\Omega}\frac{\left\vert u\right\vert ^{2}}{2}\;\mathrm{d}x\leq C\left(
M,\Omega,\varepsilon,\left\Vert f,g\right\Vert _{L^{\infty}}\right)
\end{aligned}
  \label{Estim_exist_4_levels}%
\end{equation}
and
\begin{equation}\label{est_eps1}
    \varepsilon\|\nabla^2\varrho\|_{L^r}\leq C(M,\Omega,\varepsilon,\|f,g\|_{L^\infty}).
\end{equation}

\end{theorem}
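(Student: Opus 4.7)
The plan is to solve \eqref{Approximare_eps_delta_eta_alpha_main} via the Leray--Schauder fixed point theorem, applied to a map $\mathcal{T}:v\mapsto u$ built in two steps. Given $v\in (W_0^{1,q}(\Omega))^d$, the mollified drift $\omega_\delta\ast T_\delta v$ is smooth and compactly supported strictly inside $\Omega$, so the linear elliptic problem
\[
-\varepsilon\Delta\varrho+\eta\varrho+\operatorname{div}(\varrho\,\omega_\delta\ast T_\delta v)=\eta M/|\Omega|,\qquad \partial_n\varrho|_{\partial\Omega}=0,
\]
admits a unique solution $\varrho=\varrho(v)\in W^{2,r}(\Omega)$ by standard $L^p$-theory for linear elliptic equations with smooth coefficients. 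Integrating over $\Omega$ using the Neumann condition yields $\int_\Omega\varrho=M$, while testing by $(\varrho)_-$ gives $\varrho\ge 0$. With $\varrho$ now fixed, the momentum equation is a nonlinear elliptic problem whose principal part $-\operatorname{div}\mathbb{S}(\mathbb{D}u)-\alpha\operatorname{div}(|\nabla u|^{q-2}\nabla u)$ is strictly monotone and coercive on $(W_0^{1,q}(\Omega))^d$ -- the $\alpha$-regularization secures $W^{1,q}$-coercivity regardless of $r$ -- and the remaining $v$- and $\varrho$-dependent terms form a pseudo-monotone perturbation, so Browder's theorem (or a Galerkin scheme) produces a unique $u\in (W_0^{1,q}(\Omega))^d$, and I set $\mathcal{T}(v)=u$. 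Viewed as an operator from $L^q$ to $L^q$, $\mathcal{T}$ factors through the compact embedding $W^{1,q}\hookrightarrow L^q$ and depends continuously on $v$, hence is compact.

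The Leray--Schauder argument then requires a uniform a priori bound on fixed points of $\sigma\mathcal{T}$ for $\sigma\in[0,1]$, delivered by the energy identity \eqref{identity_1}. This identity is obtained by combining two tests. Testing the continuity equation by $\frac{\gamma}{\gamma-1}\varrho^{\gamma-1}$ produces the dissipation $\frac{4\varepsilon}{\gamma}\int|\nabla\varrho^{\gamma/2}|^2$, the $\varrho^\gamma$ and $\varrho^{\gamma-1}$ contributions appearing in \eqref{identity_1}, and a drift term $-\int(\omega_\delta\ast T_\delta u)\cdot\nabla\varrho^\gamma$. Testing the momentum equation by $u$ and rewriting the convective term via
\[
\int_\Omega\operatorname{div}(\varrho(\omega_\delta\ast T_\delta u)\otimes u)\cdot u\,\mathrm{d}x=\tfrac{1}{2}\int_\Omega\operatorname{div}(\varrho\,\omega_\delta\ast T_\delta u)\,|u|^2\,\mathrm{d}x
\]
(valid because $\omega_\delta\ast T_\delta u$ is compactly supported inside $\Omega$), then invoking the continuity equation for $\operatorname{div}(\varrho\,\omega_\delta\ast T_\delta u)$, produces a piece that cancels the ad hoc $-\varepsilon\nabla u\nabla\varrho$ contribution together with the $\frac{\eta}{2}\varrho u$ source, leaving precisely $\frac{\eta M}{2|\Omega|}\int|u|^2$ on the left. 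The pressure work $\int T_\delta(\omega_\delta\ast\widetilde{\nabla\varrho^\gamma})\cdot u\,\mathrm{d}x$ equals $\int\nabla\varrho^\gamma\cdot(\omega_\delta\ast T_\delta u)\,\mathrm{d}x$ by symmetry of the mollifier and the construction of $T_\delta$, and cancels against the drift term from the continuity test.

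The remaining source on the right-hand side of \eqref{identity_1} is absorbed into the coercive left-hand side by Young's inequality, yielding \eqref{Estim_exist_4_levels} uniformly in $\sigma$ and closing the Leray--Schauder argument. The bound \eqref{est_eps1} then follows by reading the continuity equation as $-\varepsilon\Delta\varrho=\eta M/|\Omega|-\eta\varrho-\operatorname{div}(\varrho\,\omega_\delta\ast T_\delta u)$, noting that $u\in W^{1,q}\hookrightarrow L^\infty$ (since $q>d$) and $\varrho\in W^{2,2}\hookrightarrow L^\infty$, so the right-hand side lies in $L^r$ and elliptic $L^r$-regularity yields the required bound. I expect the main difficulty to be the rigorous bookkeeping behind the energy identity: the mollifications $\omega_\delta\ast T_\delta$ disable the literal renormalization calculus, and the triple cancellation between convection, the $-\varepsilon\nabla u\nabla\varrho$ term, and the $\frac{\eta}{2}\varrho u$ source -- all three introduced into \eqref{Approximare_eps_delta_eta_alpha_main} precisely to enable it -- must be traced carefully at the weak-formulation level. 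The other ingredients (monotone-operator existence for the momentum equation, compactness of $\mathcal{T}$, and the Leray--Schauder closure) are essentially routine, consistent with the authors relegating the full execution to Appendix A.
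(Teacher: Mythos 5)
Your construction follows the paper's Appendix A essentially step for step: the same two-stage map (solve the regularized continuity equation for $\varrho=\varrho(v)$ via the Novotn\'y--Stra\v{s}kraba result, then solve the monotone momentum problem by Browder's theorem, the $\alpha$-term giving $W^{1,q}$-coercivity), the same Leray--Schauder closure, and the same derivation of (\ref{identity_1}) by testing the continuity equation with $\tfrac{\gamma}{\gamma-1}\varrho^{\gamma-1}$ and the momentum equation with $u$, with exactly the cancellations you describe. The genuine gap is your compactness step. Leray--Schauder requires one Banach space on which the map is both continuous and compact. As you define it, the map must take $v\in (W_0^{1,q}(\Omega))^d$, because the source $F(\varrho,v)$ contains $\varepsilon\nabla v\nabla\varrho$; it is therefore not defined on $L^q$, so the phrase ``viewed as an operator from $L^q$ to $L^q$, $\mathcal{T}$ factors through the compact embedding $W^{1,q}\hookrightarrow L^q$'' does not apply, while on $W_0^{1,q}$ that embedding only gives precompactness of the image in $L^q$, not in $W^{1,q}$, which is what the fixed-point theorem needs. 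This is precisely the nontrivial part of the paper's proof: for a bounded sequence $v_n$ one passes to the limit in the equation for $\mathcal{E}(v_n)$, shows convergence of the energies $\int_\Omega\mathbb{S}(\mathbb{D}\mathcal{E}(v_n)):\mathbb{D}\mathcal{E}(v_n)+\alpha|\nabla\mathcal{E}(v_n)|^q\,\mathrm{d}x$, and uses strict monotonicity of $\mathbb{S}(\cdot)+\alpha|\nabla\cdot|^{q-2}\nabla\cdot$ to upgrade weak to strong $W^{1,q}$ convergence. (Alternatively you could really work in $L^q$, but then you must first rewrite $\varepsilon\nabla v\nabla\varrho$ in divergence form, e.g.\ componentwise as $\partial_j(v_i\partial_j\varrho)-v_i\Delta\varrho$, so that $F(\varrho,v)\in W^{-1,q'}$ for $v\in L^q$, and reprove continuity in that topology; none of this is in your sketch.)

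A second, smaller flaw: nonnegativity of $\varrho$ does not follow from testing with $\varrho_-$. That test produces the unsigned term $\tfrac12\int_\Omega\operatorname{div}(\omega_\delta\ast T_\delta v)\,\varrho_-^2\,\mathrm{d}x$, which cannot be absorbed by $\eta\int_\Omega\varrho_-^2\,\mathrm{d}x$ unless $\|\operatorname{div}(\omega_\delta\ast T_\delta v)\|_{L^\infty}\le 2\eta$, which is false in general. The paper sidesteps this by quoting Proposition \ref{transport_stationar} (Proposition 4.29 of Novotn\'y--Stra\v{s}kraba), which supplies existence, uniqueness, nonnegativity, $\int_\Omega\varrho\,\mathrm{d}x=M$ and the $W^{2,2}$ bound in one stroke; you should either cite it likewise or give the more delicate argument. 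The remaining ingredients of your sketch --- the energy identity and its uniformity in the homotopy parameter (the paper multiplies the identity for $u=\lambda\mathcal{E}(u)$ by $\lambda^q$), and (\ref{est_eps1}) via elliptic regularity for the Neumann problem, with a short bootstrap if $r$ is large --- do match the paper's route.
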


\begin{remark}
\bigskip The restriction $r>\frac{d}{2}$ is needed in order to justify
$\left(  \text{\ref{Estim_exist_4_levels}}\right)  $. In two or three dimensions, it follows however from the assumptions (\ref{restriction_r_gamma_1})-(\ref{restriction_r_gamma_2}).
\end{remark}

Let us point out that the approximate system with the mollified truncated
velocity $\omega_{\delta}\ast T_{\delta}\left(  u\right)  $ that we consider
here is somehow non-standard and that a similar system was proposed in
\cite{BreschBurtea2020}. The proof of Theorem \ref{Existence_4_levels_main} is
by now standard in the literature and follows the main ideas that can be found
in chapter $4$ of the classical reference \cite{NovotnyStraskraba2004}. For
the sake of completeness, we included a proof of Theorem
\ref{Existence_4_levels_main} in Appendix A.

\subsection{Uniform estimates\label{Uniform_Estimates}}

Before announcing the main result of this section, let us remind the readers that our results
will be obtained in $d\in\left\{  2,3\right\}  $ space dimensions. We consider
$r,\gamma>1$ such that%
\begin{equation}
d>r>\frac{3d}{d+2}\text{ and }\gamma>\frac{d\left(  r-1\right)  }%
{(d+2)r-3d}\label{cond_1}%
\end{equation}
or%
\begin{equation}
d\leq r\text{ and }\gamma>1.\label{cond_2}%
\end{equation}
Furthermore, we introduce $q_{1}^{\ast},q_{2}^{\ast}\in(1,\frac{r\gamma}%
{r-1}]$ given by%
\begin{equation}
\frac{1}{q_{1}^{\ast}}=\left\{
\begin{array}
[c]{r}%
\frac{1}{r}-\frac{1}{d}+\frac{r-1}{r\gamma}\text{ when }r<d,\\
\text{any }q\text{ : }\frac{1}{q}>\frac{r-1}{r\gamma}\text{when }r=d,\\
\frac{r-1}{r\gamma}\text{ when }r>d.
\end{array}
\right.  \label{definitie_q1_ast}%
\end{equation}
respectively%
\begin{equation}
\frac{1}{q_{2}^{\ast}}=\left\{
\begin{array}
[c]{r}%
\frac{2}{r}-\frac{2}{d}+\frac{r-1}{r\gamma}\text{ when }r<d,\\
\text{any }q\text{ : }\frac{1}{q}>\frac{r-1}{r\gamma}\text{when }r=d,\\
\frac{r-1}{r\gamma}\text{ when }r>d.
\end{array}
\right.  \label{definitie_q2_ast}%
\end{equation}

As a consequence of Theorem \ref{Existence_4_levels_main} we obtain the following

\begin{theorem}
\label{Existence_1_level}Consider $\varepsilon\in\left(  0,1\right)  $,
$d\in\left\{  2,3\right\}  $ and $r,\gamma>1$ verifying $\left(
\text{\ref{cond_1}}\right)  $ or $\left(  \text{\ref{cond_2}}\right)  $. Let
$\mathbb{S}:\mathbb{R}_{sym}^{d\times d}\rightarrow\mathbb{R}_{sym}^{d\times
d}$ be a continuous function verifying the conditions (\ref{growth})-(\ref{monotonicity}). For all $M>0$ and $f,g\in(L^{\infty
}\left(  \Omega\right)  )^{d}$, there exists a solution $\left(
\varrho,u\right)  \in W^{2,r}\left(  \Omega\right)  \times(W_{0}^{1,r}\left(
\Omega\right)  )^{d}$ for the system\footnote{$(\nabla u\nabla\varrho)_i = \sum_{j=1}^d \partial_{x_j}u_i\partial_{x_j}\varrho$}
\begin{equation}
\left\{
\begin{array}
[c]{l}%
\operatorname{div}\left(  \varrho u\right)  =\varepsilon\Delta\varrho,\\
\operatorname{div}\left(  \varrho u\otimes u\right)  -\operatorname{div}%
\mathbb{S}\left(  \mathbb{D}u\right)  +\nabla\varrho^{\gamma}=-\varepsilon
\nabla u\nabla\varrho+\varrho f+g,\\
\varrho\geq0,\text{ }%
%TCIMACRO{\dint _{\Omega}}%
%BeginExpansion
{\displaystyle\int_{\Omega}}
%EndExpansion
\varrho\text{ }\mathrm{d}x=M,\text{ }\dfrac{\partial\rho}{\partial
\overrightarrow{n}}=0\text{ on }\partial\Omega\\
u=0\text{ on }\partial\Omega
\end{array}
\right.  \label{system_eps}%
\end{equation}
verifying the following estimates:%
\begin{equation}\label{est_eps}
    \varepsilon\|\nabla^2\varrho_\varepsilon\|_{L^r}\leq C(M,\Omega,\varepsilon,\|f,g\|_{L^\infty})
\end{equation}
and
\begin{equation}
\left\{
\begin{array}
[c]{l}%
{\displaystyle\int_{\Omega}}
%EndExpansion
\mathbb{S}\left(  \mathbb{D}u\right)  :\mathbb{D}u\text{ }\mathrm{d}%
x+\dfrac{4\varepsilon}{\gamma}%
%TCIMACRO{\dint _{\Omega}}%
%BeginExpansion
{\displaystyle\int_{\Omega}}
%EndExpansion
\left\vert \nabla\varrho^{\frac{\gamma}{2}}\right\vert ^{2}\mathrm{d}x\leq
C\left(  M,\Omega,\left\Vert f,g\right\Vert _{L^{\infty}}\right)  ,\\
\left\Vert \varrho\right\Vert _{L^{\frac{r\gamma}{r-1}}}\leq C\left(
M,\Omega,\left\Vert f,g\right\Vert _{L^{\infty}}\right)  .
\end{array}
\right.  \label{estimations_niveau_eps}%
\end{equation}
Moreover, for any $\eta_{1},\eta_{2},\eta_{3}>0$ small enough there exist
$\alpha_i(\eta_i)\in (0,1)$, $i=1,2,3$, such that%
\begin{equation}
\varepsilon\left\Vert \nabla\varrho\right\Vert _{L^{q_{1}^{\ast}-\eta_1}%
}+\varepsilon\left\Vert \nabla\varrho\nabla u\right\Vert _{L^{\frac
{rq_{1}^{\ast}}{r+q_{1}^{\ast}}-\eta_{2}}}+\varepsilon\left\Vert \nabla
\varrho\otimes u\right\Vert _{L^{q_{2}^{\ast}-\eta_{3}}}\leq C\left(
M,\Omega,\left\Vert f,g\right\Vert _{L^{\infty}}\right)  \left(
\varepsilon^{\alpha\left(  \eta_{1}\right)  }+\varepsilon^{\alpha_{2}\left(
\eta_{2}\right)  }+\varepsilon^{\alpha_{3}\left(  \eta_{3}\right)  }\right)
.\label{estimation_passage_limite_eps}%
\end{equation}

\end{theorem}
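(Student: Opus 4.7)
Fix $\varepsilon\in(0,1)$ and select sequences $\alpha_n,\delta_n,\eta_n\to 0$; for each $n$, Theorem \ref{Existence_4_levels_main} furnishes a solution $(\varrho_n,u_n)\in W^{2,2}(\Omega)\times(W_0^{1,q}(\Omega))^d$ to the fully regularized system (\ref{Approximare_eps_delta_eta_alpha_main}). The plan is to establish bounds uniform in $n$ (but possibly depending on $\varepsilon$), pass to the limit in $n$, and finally derive the interpolated estimates (\ref{estimation_passage_limite_eps}).

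The first step is uniform bounds. From the energy identity (\ref{identity_1}) together with the lower bound in (\ref{growth}), Young's inequality absorbs the $\eta_n$-term $\tfrac{\gamma\eta_n}{\gamma-1}\tfrac{M}{|\Omega|}\int\varrho_n^{\gamma-1}\,\mathrm{d}x$ on the right into $\tfrac{\gamma\eta_n}{2(\gamma-1)}\int\varrho_n^\gamma\,\mathrm{d}x$ on the left, and the forcing term $\int(\varrho_n f+g)\cdot u_n\,\mathrm{d}x$ is controlled via Hölder once the density is known to lie in $L^s$ for some $s>1$. To close the loop, I upgrade the density integrability by the classical Bogovskii test function technique: testing the momentum equation of (\ref{Approximare_eps_delta_eta_alpha_main}) with $\psi_n=\mathcal{B}\bigl(\varrho_n^{\gamma/(r-1)}-(\varrho_n^{\gamma/(r-1)})_\Omega\bigr)$, where $\mathcal{B}$ is the Bogovskii operator on $\Omega$, produces $\int\varrho_n^{r\gamma/(r-1)}\,\mathrm{d}x$ as the leading term, and every other contribution is bounded by the basic energy thanks precisely to the restrictions (\ref{cond_1})--(\ref{cond_2}). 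Together with the bound (\ref{est_eps1}) from Theorem \ref{Existence_4_levels_main}, which passes through unchanged, this yields (\ref{estimations_niveau_eps}) and (\ref{est_eps}) uniformly in $(\alpha_n,\delta_n,\eta_n)$.

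With these bounds in hand, subsequence extraction yields $u_n\rightharpoonup u$ in $W_0^{1,r}$ and, thanks to (\ref{est_eps}) together with the embedding $W^{2,r}(\Omega)\hookrightarrow\hookrightarrow C(\overline\Omega)$ (valid for $r>d/2$, implied by (\ref{cond_1})--(\ref{cond_2})), strong convergence $\varrho_n\to\varrho$ in $C(\overline\Omega)$ and $\nabla\varrho_n\rightharpoonup\nabla\varrho$ in $L^r$. Consequently all density-dependent terms pass to their natural limits, the mollifier--truncation $\omega_{\delta_n}\ast T_{\delta_n}$ converges to the identity on $W_0^{1,r}$, and the $\alpha_n$- and $\eta_n$-contributions vanish by the uniform energy bound. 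Strong $L^2$ convergence of $u_n$ (Rellich--Kondrachov) handles the convective term. The only nontrivial limit is the viscous one, for which I apply Minty's monotonicity trick: testing the momentum equation by $u_n$ and observing that the $\varepsilon\nabla u_n\nabla\varrho_n$ contributions cancel between left and right, one obtains the clean identity
\[
\int_\Omega\mathbb{S}(\mathbb{D}u_n):\mathbb{D}u_n\,\mathrm{d}x=\int_\Omega\varrho_n^\gamma\,\mathrm{div}\,u_n\,\mathrm{d}x+\int_\Omega(\varrho_n f+g)\cdot u_n\,\mathrm{d}x+o_n(1),
\]
whose right-hand side passes to the limit by the strong convergences collected above and equals $\int_\Omega\overline{\mathbb{S}(\mathbb{D}u)}:\mathbb{D}u\,\mathrm{d}x$, as seen by testing the limit equation with $u$. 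Combined with the monotonicity (\ref{monotonicity}), this identifies $\overline{\mathbb{S}(\mathbb{D}u)}=\mathbb{S}(\mathbb{D}u)$ a.e., so $(\varrho,u)$ solves (\ref{system_eps}).

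The estimates (\ref{estimation_passage_limite_eps}) then follow from the factorization $\nabla\varrho=\tfrac{2}{\gamma}\varrho^{1-\gamma/2}\nabla\varrho^{\gamma/2}$ combined with $\varepsilon^{1/2}\|\nabla\varrho^{\gamma/2}\|_{L^2}\leq C$, $\|\varrho\|_{L^{r\gamma/(r-1)}}\leq C$, and (where needed) the Sobolev embedding of $\varrho^{\gamma/2}\in H^1$ to chase extra integrability; the explicit shapes of $q_1^*$ and $q_2^*$ in (\ref{definitie_q1_ast})--(\ref{definitie_q2_ast}) emerge from the resulting Hölder exponents, the small losses $\eta_i$ appear at the borderline cases ($r=d$ or endpoint Sobolev), and the positive powers $\alpha_i(\eta_i)$ of $\varepsilon$ come from interpolating between the $\varepsilon$-free density bound and the $\varepsilon^{1/2}$-weighted bound on $\nabla\varrho^{\gamma/2}$. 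The main technical obstacle throughout is the Bogovskii step: verifying the admissibility of the test function and absorbing each of the many terms in (\ref{Approximare_eps_delta_eta_alpha_main}) into the good quantity $\|\varrho_n\|_{L^{r\gamma/(r-1)}}$ requires the careful case analysis encoded precisely in (\ref{cond_1})--(\ref{cond_2}).
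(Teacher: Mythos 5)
Your overall architecture (limit from the four--parameter system, Bogovskii estimate for the density, Minty's trick for the viscous term, interpolation for the $\varepsilon$--weighted bounds) is the paper's, but two steps as written do not go through. First, you run the Bogovskii argument on the fully regularized system (\ref{Approximare_eps_delta_eta_alpha_main}), where the pressure enters as $T_{\delta}(\omega_{\delta}\ast\widetilde{\nabla\varrho^{\gamma}})$. Pairing this with $\Psi=\mathcal{B}\bigl(\varrho^{\gamma/(r-1)}-(\varrho^{\gamma/(r-1)})_{\Omega}\bigr)$ does \emph{not} produce $\int_{\Omega}\varrho^{r\gamma/(r-1)}\,\mathrm{d}x$ as the leading term: after moving the mollifier and the cut--off $\mathbf{1}_{\Omega_{2\delta}}$ onto $\Psi$ you get a pairing of $\varrho^{\gamma}$ with a mollified, truncated version of $\operatorname{div}\Psi$ (plus commutator and boundary--layer terms), and since the two factors are \emph{different} powers of $\varrho$ there is no positivity argument giving coercivity uniformly in $\delta$; in addition the $\alpha$--term requires $\nabla\Psi\in L^{q}$ with $q>d$, which is not available from the target bound. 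The paper avoids all of this by a different ordering: it first passes $\alpha,\delta,\eta\to0$ using only the $\varepsilon$--dependent bounds (\ref{Estim_exist_4_levels})--(\ref{est_eps1}), and only then tests the clean momentum equation of (\ref{system_eps}) with $\Psi$, where $\nabla\varrho^{\gamma}$ pairs exactly with $\operatorname{div}\Psi$ and the case analysis $r<d$, $r=d$, $r>d$ closes the estimate under (\ref{cond_1})--(\ref{cond_2}). Your claim of (\ref{estimations_niveau_eps}) ``uniformly in $(\alpha_n,\delta_n,\eta_n)$'' is therefore unjustified as stated, although the repair is simply to adopt the paper's ordering.

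Second, and more seriously, your derivation of (\ref{estimation_passage_limite_eps}) is missing the ingredient that actually produces the exponent $q_{1}^{\ast}$: the elliptic Neumann estimate of Proposition \ref{caldura} applied to $\varepsilon\Delta\varrho=\operatorname{div}(\varrho u)$, which gives the $\varepsilon$--uniform bound $\varepsilon\|\nabla\varrho\|_{L^{q_{1}^{\ast}}}\leq C\|\varrho u\|_{L^{q_{1}^{\ast}}}\leq C$, with $\tfrac{1}{q_{1}^{\ast}}=\tfrac{r-1}{r\gamma}+\tfrac1r-\tfrac1d$ coming from H\"older on $\varrho u$. The ingredients you list --- the factorization $\nabla\varrho=\tfrac{2}{\gamma}\varrho^{1-\gamma/2}\nabla\varrho^{\gamma/2}$, the bound $\sqrt{\varepsilon}\|\nabla\varrho^{\gamma/2}\|_{L^{2}}\leq C$, the $L^{r\gamma/(r-1)}$ bound on $\varrho$ and Sobolev embedding of $\varrho^{\gamma/2}$ --- can only control $\nabla\varrho$ in $L^{s}$ with $s\leq 2$ (the gradient factor lives in $L^{2}$, period), whereas in general $q_{1}^{\ast}>2$; so no interpolation among your listed bounds reaches $L^{q_{1}^{\ast}-\eta_{1}}$ for small $\eta_{1}$, with or without a power of $\varepsilon$. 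Moreover the factorization is useless for $\gamma>2$ (negative power of $\varrho$); the paper covers $\gamma\geq2$ by testing the continuity equation with $\varrho$ to get $\varepsilon\|\nabla\varrho\|_{L^{2}}\leq C\sqrt{\varepsilon}$. The correct scheme is: elliptic estimate for the $\varepsilon$--free $L^{q_{1}^{\ast}}$ bound, a $\sqrt{\varepsilon}$--weighted bound in a lower Lebesgue exponent (factorization if $\gamma<2$, testing with $\varrho$ if $\gamma\geq2$), and interpolation between the two, after which the bounds on $\varepsilon\nabla\varrho\nabla u$ and $\varepsilon\nabla\varrho\otimes u$ follow by H\"older and Sobolev.
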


\begin{proof}
    The existence follows by
considering a sequence of solutions of $\left(
\text{\ref{Approximare_eps_delta_eta_alpha_main}}\right)  $ and passing to the
limit with $\alpha,\delta,\eta\rightarrow0$. Owing to the fact that the
estimates $\left(  \text{\ref{Estim_exist_4_levels}}\right)  $ are verified
uniformly w.r.t. $\alpha,\delta,\eta$ we end up with a limiting couple
$\left(  \varrho,u\right)  \in W^{2,r}\left(  \Omega\right)  \cap(W_{0}%
^{1,r}(\Omega))^{d}$. The fact that $\left(  \varrho,u\right)  $ is a solution
for system $\left(  \text{\ref{system_eps}}\right)  $ is proved by using
classical monotonicity compactness arguments and the Rellich--Kondrachov
theorem we therefore leave it as an exercise for the reader. The estimate (\ref{est_eps}) is
inherited from the lower semicontinuity and the fact that solutions of
$\left(  \text{\ref{Approximare_eps_delta_eta_alpha_main}}\right)  $ verify estimates (\ref{Estim_exist_4_levels})-
(\ref{est_eps1}). Note that at this point the constant in (\ref{Estim_exist_4_levels}) depends also on $\varepsilon$. The bounds (\ref{estimations_niveau_eps}) are shown in the following Proposition:
\begin{proposition}
    Under the assumptions of Theorem \ref{Existence_1_level}, $(\varrho_\varepsilon,u_\varepsilon)$ satisfy the estimates (\ref{estimations_niveau_eps}).
\end{proposition}
\begin{proof} Observe
that from $\left(  \text{\ref{identity_1}}\right)  $ we have that%
\begin{equation}
\int_{\Omega}\mathbb{S}(\mathbb{D}u):\mathbb{D}u\text{ }\mathrm{d}x+\frac{4\varepsilon
}{\gamma}%
%TCIMACRO{\dint _{\Omega}}%
%BeginExpansion
{\displaystyle\int_{\Omega}}
%EndExpansion
\left\vert \nabla\varrho^{\frac{\gamma}{2}}\right\vert ^{2}\mathrm{d}x\leq
\int_{\Omega}(\varrho f+g)u\text{ }\mathrm{d}x.\label{estimation_energie_eps}%
\end{equation}

\underline{The case $r<d.$} From $\left(  \text{\ref{estimation_energie_eps}%
}\right)  $, $\left(  \text{\ref{growth}}\right)  $ and Korn's
inequality, it follows that%
\[
\left\Vert \nabla u\right\Vert _{L^{r}}^{r}\leq C\left\Vert \varrho\right\Vert
_{L^{\frac{1}{1+\frac{1}{d}-\frac{1}{r}}}}\left\Vert f\right\Vert _{L^{\infty
}}\left\Vert u\right\Vert _{L^{\frac{dr}{d-r}}}+C\left(  \Omega\right)
\left\Vert g\right\Vert _{L^{\infty}}\left\Vert u\right\Vert _{L^{\frac
{dr}{d-r}}}.
\]
Consequently,%
\begin{equation}
\left\Vert \nabla u\right\Vert _{L^{r}}\leq C\left(  \Omega,\left\Vert
f,g\right\Vert _{L^{\infty}}\right)  (\left\Vert \varrho\right\Vert
_{L^{\frac{1}{1+\frac{1}{d}-\frac{1}{r}}}}^{\frac{1}{r-1}}+1).\label{estim_nabla_u}%
\end{equation}
We precise that the condition $\left(  \text{\ref{cond_1}}\right)  $ imposed
on $r$ and $\gamma$ ensures that that the integrability of $\rho\left\vert
u\right\vert ^{2}$ is higher than that of $\mathbb{S}$:%
\begin{align*}
\frac{r-1}{r\gamma}+\frac{2\left(  d-r\right)  }{dr}%
<\frac{r-1}{r} & \Leftrightarrow\frac{2\left(  d-r\right)  }{dr}<\frac
{r-1}{r}\frac{\gamma-1}{\gamma},\\
&  \Leftrightarrow\frac{2\left(  d-r\right)  }{d\left(  r-1\right)
}<\frac{\gamma-1}{\gamma},\\
&  \Leftrightarrow\frac{1}{\gamma}<1-\frac{2\left(  d-r\right)  }{d\left(
r-1\right)  }=\frac{(d+2)r-3d}{d\left(  r-1\right)  },\\
&  \Leftrightarrow r>\frac{3d}{d+2}\text{ and }\gamma>\frac{d\left(
r-1\right)  }{(d+2)r-3d}.
\end{align*}

From $\left(  \text{\ref{estim_nabla_u}}\right)  $ we infer that%
\begin{align}
\left\Vert \varrho u\otimes u\right\Vert _{L^{\frac{r}{r-1}}}  &  \leq
C\left(  \Omega\right)  \left\Vert \varrho\right\Vert _{L^{\frac{1}{1+\frac
{2}{d}-\frac{3}{r}}}}\left\Vert \nabla u\right\Vert _{L^{r}}^{2}\nonumber\\
&  \leq C\left(  \Omega,\left\Vert f,g\right\Vert _{L^{\infty}}\right)
\left\Vert \varrho\right\Vert _{L^{\frac{1}{1+\frac{2}{d}-\frac{3}{r}}}%
}(\left\Vert \varrho\right\Vert _{L^{\frac{1}{1+\frac{1}{d}-\frac{1}{r}}}%
}^{\frac{2}{r-1}}+1). \label{estimare_rho_u^2_3}%
\end{align}

Next, using Proposition \ref{caldura} we obtain that
\begin{align}
\varepsilon\left\Vert \nabla u\nabla\varrho\right\Vert _{L^{\frac{1}%
{1+\frac{1}{d}-\frac{1}{r}}}}  &  \leq\varepsilon\left\Vert \nabla
\varrho\right\Vert _{L^{\frac{1}{1+\frac{1}{d}-\frac{2}{r}}}}\left\Vert \nabla
u\right\Vert _{L^{r}}\leq C\left(  \Omega\right)  \left\Vert \rho u\right\Vert
_{L^{\frac{1}{1+\frac{1}{d}-\frac{2}{r}}}}\left\Vert \nabla u\right\Vert
_{L^{r}}\nonumber\\
&  \leq C\left(  \Omega\right)  \left\Vert \varrho\right\Vert _{L^{\frac
{1}{1+\frac{2}{d}-\frac{3}{r}}}}\left\Vert u\right\Vert _{L^{\frac{dr}{d-r}}%
}\left\Vert \nabla u\right\Vert _{L^{r}}\nonumber\\
&  \leq C\left(  \Omega,\left\Vert f,g\right\Vert _{L^{\infty}}\right)
\left\Vert \varrho\right\Vert _{L^{\frac{1}{1+\frac{2}{d}-\frac{3}{r}}}%
}(\left\Vert \varrho\right\Vert _{L^{\frac{1}{1+\frac{1}{d}-\frac{1}{r}}}%
}^{\frac{2}{r-1}}+1). \label{eps_nabla_u_nabla_rho}%
\end{align}

Let us remark that using a density argument, we can test the momentum equation
with test functions belonging to $W_{0}^{1,r}\left(  \Omega\right)  .$ Let
$\Psi=\mathcal{B}\left(  \varrho^{\frac{\gamma}{r-1}}-\frac{1}{|\Omega|}%
\int_{\Omega}\varrho^{\frac{\gamma}{r-1}}\mathrm{d}x\right)  $, where
$\mathcal{B}$ is the Bogovskii operator see Proposition \ref{Bogo} from
Appendix B. We have that%
\begin{equation}
\left\Vert \Psi\right\Vert _{L^{\frac{dr}{d-r}}}\leq C\left(  \Omega\right)
\left\Vert \nabla\Psi\right\Vert _{L^{r}}\leq C\left(  M,\Omega\right)
\left\Vert \varrho\right\Vert _{L^{\frac{r\gamma}{r-1}}}^{\frac{\gamma}{r-1}%
}.\label{estim_psi}%
\end{equation}
Since $r>\frac{3d}{d+2}\geq\frac{d}{2}$ we have $\varrho^{\gamma}\in
L^{\frac{d}{d-2}}\subseteq L^{\frac{r}{r-1}}$. We use $\Psi$ as a test
function in the momentum equation and we get that%
\begin{align}
\Vert\varrho\Vert_{L^{\frac{r\gamma}{r-1}}}^{\frac{r\gamma}{r-1}} &  \leq
\frac{1}{|\Omega|}\int_{\Omega}\varrho^{\frac{\gamma}{r-1}}\mathrm{d}%
x\cdot\Vert\varrho\Vert_{L^{\gamma}}^{\gamma}\newline+\Vert\varrho u\otimes
u\Vert_{L^{\frac{r}{r-1}}}\Vert\nabla\Psi\Vert_{L^{r}}+\Vert\mathbb{S}%
\Vert_{L^{\frac{r}{r-1}}}\Vert\nabla\Psi\Vert_{L^{r}}\nonumber\\
&  +\varepsilon\left\Vert \nabla u\nabla\varrho\right\Vert _{L^{\frac
{1}{1+\frac{1}{d}-\frac{1}{r}}}}\left\Vert \Psi\right\Vert _{L^{\frac{dr}%
{d-r}}}+\left\Vert f\right\Vert _{L^{\infty}}\left\Vert \varrho\right\Vert
_{L^{\frac{1}{1+\frac{1}{d}-\frac{1}{r}}}}\left\Vert \Psi\right\Vert
_{L^{\frac{dr}{d-r}}}+C\left(  \Omega\right)  \left\Vert g\right\Vert
_{L^{\infty}}\left\Vert \Psi\right\Vert _{L^{\frac{dr}{d-r}}}%
.\label{estimare_rho_cu_Bogo}%
\end{align}

Now using $\left(  \text{\ref{estim_psi}}\right)  $ along with $\left(
\text{\ref{estimare_rho_u^2_3}}\right)  $ and $\left(
\text{\ref{eps_nabla_u_nabla_rho}}\right)  $ we get that%
\begin{align}
&  \Vert\varrho u\otimes u\Vert_{L^{\frac{r}{r-1}}}\Vert\nabla\Psi\Vert
_{L^{r}}+\varepsilon\left\Vert \nabla u\nabla\varrho\right\Vert _{L^{\frac
{1}{1+\frac{1}{d}-\frac{1}{r}}}}\left\Vert \Psi\right\Vert _{L^{\frac{dr}%
{d-r}}}\nonumber\\
&  \leq C\left(  \Omega,\left\Vert f,g\right\Vert _{L^{\infty}}\right)
\left\Vert \varrho\right\Vert _{L^{\frac{1}{1+\frac{2}{d}-\frac{3}{r}}}%
}(\left\Vert \varrho\right\Vert _{L^{\frac{1}{1+\frac{1}{d}-\frac{1}{r}}}%
}^{\frac{2}{r-1}}+1)\left\Vert \varrho\right\Vert _{L^{\frac{r\gamma}{r-1}}%
}^{\frac{\gamma}{r-1}}. \label{rhu_u^2_4}%
\end{align}

Since $r<d$ this implies that $1+\frac{1}{d}-\frac{1}{r}>1+\frac{2}{d}%
-\frac{3}{r}$ and by interpolation we obtain that%
\begin{equation}
\left\Vert \varrho\right\Vert _{L^{\frac{1}{1+\frac{1}{d}-\frac{1}{r}}}}%
\leq\left\Vert \varrho\right\Vert _{L^{1}}^{1-q\left(  r,d\right)  }\left\Vert
\varrho\right\Vert _{L^{\frac{1}{1+\frac{2}{d}-\frac{3}{r}}}}^{q\left(
r,d\right)  },\label{estimate_interpolare_lebesque_urat}%
\end{equation}
with%
\[
q\left(  r,d\right)  =\frac{\frac{1}{r}-\frac{1}{d}}{\frac{3}{r}-\frac{2}{d}}.
\]
We obtain that%
\begin{align}
&  \Vert\varrho u\otimes u\Vert_{L^{\frac{r}{r-1}}}\Vert\nabla\Psi\Vert
_{L^{r}}+\varepsilon\left\Vert \nabla u\nabla\varrho\right\Vert _{L^{\frac
{1}{1+\frac{1}{d}-\frac{1}{r}}}}\left\Vert \Psi\right\Vert _{L^{\frac{dr}%
{d-r}}}\nonumber\\
&  \leq C\left(  \Omega,\left\Vert f,g\right\Vert _{L^{\infty}}\right)
(\left\Vert \varrho\right\Vert _{\frac{1}{1+\frac{2}{d}-\frac{3}{r}}}%
^{\frac{2}{r-1}q\left(  r,d\right)  +1}+1)\left\Vert \varrho\right\Vert
_{L^{\frac{r\gamma}{r-1}}}^{\frac{\gamma}{r-1}}.\label{rhu_u^2_44}%
\end{align}
Next, we observe that for all $\frac{3d}{d+2}\leq r\leq d$%
\[
\gamma>\frac{d\left(  r-1\right)  }{dr+2r-3d}\geq\frac{2}{r-1}q\left(
r,d\right)  +1.
\]
Moreover, using that
\begin{align*}
\frac{1}{1+\frac{2}{d}-\frac{3}{r}}  <\frac{r\gamma}{r-1} &\Leftrightarrow
1+\frac{2}{d}-\frac{3}{r}>\frac{r-1}{r\gamma},\\
& \Leftrightarrow1+\frac{2}{d}-\frac{3}{r}>\frac{r-1}{r\gamma},\\
&  \Leftrightarrow\frac{\left(  d+2\right)  r-3d}{d}>\frac{r-1}{\gamma},\\
&  \Leftrightarrow r>\frac{3d}{d+2}\text{ and }\gamma>\frac{d(r-1)}%
{\left(  d+2\right)  r-3d}%
\end{align*}
and combining this with estimates $\left(  \text{\ref{rhu_u^2_44}}\right)  $
and $\left(  \text{\ref{estimate_interpolare_lebesque_urat}}\right)  $ we
obtain the existence of some $\alpha_{1}\in\left(  0,1\right)  $ such that%
\begin{equation}
\Vert\varrho u\otimes u\Vert_{L^{\frac{r}{r-1}}}\Vert\nabla\Psi\Vert_{L^{r}%
}+\varepsilon\left\Vert \nabla u\nabla\varrho\right\Vert _{L^{\frac{1}%
{1+\frac{1}{d}-\frac{1}{r}}}}\left\Vert \Psi\right\Vert _{L^{\frac{dr}{d-r}}%
}\leq C\left(  \Omega,\left\Vert f,g\right\Vert _{L^{\infty}}\right)
(\left\Vert \varrho\right\Vert _{L^{\frac{r\gamma}{r-1}}}^{\frac{r\gamma}%
{r-1}-\alpha_{1}}+1).\label{rho_u^2_5}%
\end{equation}

We observe that%
\begin{equation}
\Vert\mathbb{S}\Vert_{L^{\frac{r}{r-1}}}\Vert\nabla\Psi\Vert_{L^{r}}\leq
C\left(  \Omega\right)  \left\Vert \nabla u\right\Vert _{L^{r}}^{r-1}%
\left\Vert \varrho\right\Vert _{L^{\frac{r\gamma}{r-1}}}^{\frac{\gamma}{r-1}%
}\leq C\left(  \Omega,\left\Vert f,g\right\Vert _{L^{\infty}}\right)
(\left\Vert \varrho\right\Vert _{L^{\frac{1}{1+\frac{1}{d}-\frac{1}{r}}}%
}+1)\left\Vert \varrho\right\Vert _{L^{\frac{r\gamma}{r-1}}}^{\frac{\gamma
}{r-1}}. \label{S_nabla_psi}%
\end{equation}
Using again $\left(  \text{\ref{estimate_interpolare_lebesque_urat}}\right)  $
we obtain the existence of some $\alpha_{2}\in\left(  0,1\right)  $ such that%
\begin{equation}
\Vert\mathbb{S}\Vert_{L^{\frac{r}{r-1}}}\Vert\nabla\Psi\Vert_{L^{r}}\leq
C\left(  \Omega,\left\Vert f,g\right\Vert _{L^{\infty}}\right)  (\left\Vert
\varrho\right\Vert _{L^{\frac{r\gamma}{r-1}}}^{\frac{r\gamma}{r-1}-\alpha_{2}%
}+1). \label{S_nabla_psi_2}%
\end{equation}

All the remaining terms from $\left(  \text{\ref{estimare_rho_cu_Bogo}%
}\right)  $ verify estimates of the type $\left(  \text{\ref{rho_u^2_5}%
}\right)  $-$\left(  \text{\ref{S_nabla_psi_2}}\right)  $ thus using Young's
inequality we conclude that
\[
\Vert\varrho\Vert_{L^{\frac{r\gamma}{r-1}}}^{\frac{r\gamma}{r-1}}\leq C\left(
M,\Omega,\left\Vert f,g\right\Vert _{L^{\infty}}\right)  .
\]
Thus, the last estimate of $\left(  \text{\ref{estimations_niveau_eps}%
}\right)  $ is proved. Returning back to $\left(
\text{\ref{estimation_energie_eps}}\right)  $ and using that $\frac{1}%
{1+\frac{1}{d}-\frac{1}{r}}<\frac{r\gamma}{r-1}$ we conclude, using
interpolation, the validity of the first estimate of $\left(
\text{\ref{estimations_niveau_eps}}\right)  $.

\underline{The case $r=d.$} From $\left(  \text{\ref{estimation_energie_eps}%
}\right)  $ it follows that for any $b>1$ that%
\begin{equation}
\left\Vert \nabla u\right\Vert _{L^{d}}\leq C\left(  \Omega,\left\Vert
f,g\right\Vert _{L^{\infty}},b\right)  (\left\Vert \varrho\right\Vert _{L^{b}%
}^{\frac{1}{d-1}}+1)\leq C\left(  \Omega,\left\Vert f,g\right\Vert
_{L^{\infty}},b\right)  (\left\Vert \varrho\right\Vert _{L^{b}}^{\frac{1}%
{d-1}}+1).\label{estimate_r=d}%
\end{equation}
Let us remark that using a density argument, we can test the momentum equation
with test functions belonging to $W_{0}^{1,d}\left(  \Omega\right)  $. Using
the Sobolev embedding and $\left(  \text{\ref{estimate_r=d}}\right)  $ we have
that for any $c>\frac{d}{d-1}.$%
\begin{equation}
\left\Vert \varrho u\otimes u\right\Vert _{L^{\frac{d}{d-1}}}\leq C\left(
\Omega,\left\Vert f,g\right\Vert _{L^{\infty}},b\right)  \left\Vert
\varrho\right\Vert _{L^{c}}(\left\Vert \varrho\right\Vert _{L^{b}}^{\frac
{2}{d-1}}+1).\label{estimate_r=d_1}%
\end{equation}
We consider arbitrary $a>1$ such that
\begin{equation}
\frac{1}{d}+\frac{1}{c}+\frac{1}{a}<1\label{restriction}%
\end{equation}
Again, using Proposition \ref{caldura} we have that
\begin{align}
\varepsilon\left\Vert \nabla u\nabla\varrho\right\Vert _{L^{\frac{1}{\frac
{1}{d}+\frac{1}{c}+\frac{1}{a}}}} &  \leq\varepsilon\left\Vert \nabla
\varrho\right\Vert _{L^{\frac{1}{\frac{1}{c}+\frac{1}{a}}}}\left\Vert \nabla
u\right\Vert _{L^{d}}\leq C\left(  \Omega\right)  \left\Vert \rho u\right\Vert
_{L^{\frac{1}{\frac{1}{c}+\frac{1}{a}}}}\left\Vert \nabla u\right\Vert
_{L^{d}}\nonumber\\
&  \leq C\left(  \Omega,a\right)  \left\Vert \varrho\right\Vert _{L^{c}%
}\left\Vert u\right\Vert _{L^{a}}\left\Vert \nabla u\right\Vert _{L^{d}%
}\nonumber\\
&  \leq C\left(  \Omega,\left\Vert f,g\right\Vert _{L^{\infty}},b,a\right)
C\left(  \Omega\right)  \left\Vert \varrho\right\Vert _{L^{c}}(\left\Vert
\varrho\right\Vert _{L^{b}}^{\frac{2}{d-1}}+1).\label{estimate_r=d_2}%
\end{align}
As before, we test the momentum equation with $\Psi=\mathcal{B}\left(
\varrho^{\frac{\gamma}{d-1}}-\frac{1}{|\Omega|}\int_{\Omega}\varrho
^{\frac{\gamma}{d-1}}\mathrm{d}x\right)  $, where $\mathcal{B}$ is the
Bogovskii operator see Proposition \ref{Bogo} from Appendix B. We have that
for any $q\in\lbrack1,\infty)$ :%
\begin{equation}
\left\Vert \Psi\right\Vert _{L^{q}}\leq C\left(  \Omega,q\right)  \left\Vert
\nabla\Psi\right\Vert _{L^{d}}\text{ and }\left\Vert \nabla\Psi\right\Vert
_{L^{d}}\leq C\left(  M,\Omega\right)  \left\Vert \varrho\right\Vert
_{L^{\frac{d\gamma}{d-1}}}^{\frac{\gamma}{d-1}}.\label{Bogo_r=d}%
\end{equation}
We obtain that for arbitrarily $c>\frac{d}{d-1}>b>1$ and $a>1$ such that
$\left(  \text{\ref{restriction}}\right)  $ holds%
\begin{align}
&  \Vert\varrho u\otimes u\Vert_{L^{\frac{r}{r-1}}}\Vert\nabla\Psi\Vert
_{L^{r}}+\varepsilon\left\Vert \nabla u\nabla\varrho\right\Vert _{L^{\frac
{1}{1+\frac{1}{d}-\frac{1}{r}}}}\left\Vert \Psi\right\Vert _{L^{\frac{dr}%
{d-r}}}\nonumber\\
&  \leq C\left(  \Omega,\left\Vert f,g\right\Vert _{L^{\infty}},b,a\right)
C\left(  \Omega\right)  \left\Vert \varrho\right\Vert _{L^{c}}(\left\Vert
\varrho\right\Vert _{L^{b}}^{\frac{2}{d-1}}+1)\left\Vert \varrho\right\Vert
_{L^{\frac{d\gamma}{d-1}}}\nonumber\\
&  \leq C\left(  \Omega,\left\Vert f,g\right\Vert _{L^{\infty}},b,a\right)
C\left(  \Omega\right)  \left\Vert \varrho\right\Vert _{L^{c}}(\left\Vert
\varrho\right\Vert _{L^{c}}^{\frac{2}{d-1}\frac{1-\frac{1}{b}}{1-\frac{1}{c}}%
}+1)\left\Vert \varrho\right\Vert _{L^{\frac{d\gamma}{d-1}}}%
.\label{estimate_r=d_4}%
\end{align}
since
\[
1+\lim_{b\rightarrow1}\frac{2}{d-1}\frac{1-\frac{1}{b}}{1-\frac{1}{c}%
}=1\text{, }\gamma>1
\]
we infer that we may chose $a,b,c$ in such a way that the last term from
$\left(  \text{\ref{estimate_r=d_4}}\right)  $ would be bounded by a multiple
of $\left\Vert \varrho\right\Vert _{L^{\frac{d\gamma}{d-1}}}^{\frac{d\gamma
}{\gamma-1}-\alpha}+1$ for some $\alpha>0$. The rest of the proof proceeds as
in the previous case.

\underline{The case $r>d.$} This case is much more simpler since we can obtain
a bound for the energy by just using the embedding of $W_{0}^{1,r}\left(
\Omega\right)  $ into $L^{\infty}\left(  \Omega\right)  $ in $\left(
\text{\ref{estimation_energie_eps}}\right)  $. We have directly that%
\[
\int_{\Omega}\mathbb{S(}u):\nabla u\mathrm{d}x+\frac{4\varepsilon}{\gamma}%
%TCIMACRO{\dint _{\Omega}}%
%BeginExpansion
{\displaystyle\int_{\Omega}}
%EndExpansion
\left\vert \nabla\varrho^{\frac{\gamma}{2}}\right\vert ^{2}\mathrm{d}x\leq
C\left(  M,\Omega,\left\Vert f,g\right\Vert _{L^{\infty}}\right)  .
\]
As before, we obtain the existence of some $\alpha_{1}\in\left(  0,1\right)  $
such that
\[
\left\Vert \varrho u\otimes u\right\Vert _{L^{\frac{r}{r-1}}}\leq C\left(
M,\Omega\right)  \left\Vert \varrho\right\Vert _{L^{\frac{r\gamma}{r-1}%
-\alpha_{1}}}\left\Vert \nabla u\right\Vert _{L^{r}}^{2}\leq C\left(
M,\Omega,\left\Vert f,g\right\Vert _{L^{\infty}}\right)  \left\Vert
\varrho\right\Vert _{L^{\frac{r\gamma}{r-1}-\alpha_{1}}}.
\]
Next, for some $\alpha_{2}\in\left(  0,1\right)  $ we have that Proposition
\ref{caldura}
\[
\varepsilon\left\Vert \nabla u\nabla\varrho\right\Vert _{L^{1}}\leq C\left(
\Omega\right)  \left\Vert \nabla u\right\Vert _{L^{r}}\left\Vert \varrho
u\right\Vert _{L^{\frac{r}{r-1}}}\leq C\left(  M,\Omega,\left\Vert
f,g\right\Vert _{L^{\infty}}\right)  \left\Vert \varrho\right\Vert
_{L^{\frac{r\gamma}{r-1}-\alpha_{2}}}.
\]
The rest of the proof proceed as in the case $r<d$.
\end{proof}

In order to prove $\left(  \text{\ref{estimation_passage_limite_eps}}\right)
$ we first use Proposition \ref{caldura} to obtain that
\begin{equation}
\varepsilon\left\Vert \nabla\varrho\right\Vert _{L^{q_{1}^{\ast}}}\leq
C\left(  M,\Omega,\left\Vert f,g\right\Vert _{L^{\infty}}\right)
,\label{eps_0}%
\end{equation}
see $\left(  \text{\ref{definitie_q1_ast}}\right)  $ for the definition of
$q_{1}^{\ast}.$ When $\gamma\geq2$ then testing the continuity equation with
$\varrho$ we obtain that
\begin{align}
\varepsilon\int_{\Omega}|\nabla\varrho|^{2}\mathrm{d}x &  =-\frac{1}{2}%
\int_{\Omega}\varrho^{2}\operatorname{div}u\mathrm{d}x\leq C\left(
M,\Omega\right)  \Vert\varrho\Vert_{L^{\frac{2r}{r-1}}}^{2}\Vert
\operatorname{div}u\Vert_{L^{r}}\label{something_eps_1}\\
&  \leq C\left(  M,\Omega\right)  \Vert\varrho\Vert_{L^{\frac{r\gamma}{r-1}}%
}^{2}\Vert\operatorname{div}u\Vert_{L^{r}}\leq C\left(  M,\Omega,\left\Vert
f,g\right\Vert _{L^{\infty}}\right)  .\nonumber
\end{align}
We deduce that
\[
\varepsilon\left\Vert \nabla\varrho\right\Vert _{L^{2}}\leq C\left(
M,\Omega,\left\Vert f,g\right\Vert _{L^{\infty}}\right)  \sqrt{\varepsilon}.
\]
When $\gamma<2$ we have that
\[
\nabla\varrho=\frac{2}{\gamma}\varrho^{1-\frac{\gamma}{2}}\nabla\varrho
^{\frac{\gamma}{2}}%
\]
such that we end up with%
\begin{equation}
\varepsilon\left\Vert \nabla\varrho\right\Vert _{L^{p^{\ast}}}\leq
\sqrt{\varepsilon}\frac{2}{\gamma}\sqrt{\varepsilon}\left\Vert \nabla
\varrho^{\frac{\gamma}{2}}\right\Vert _{L^{2}}\left\Vert \varrho\right\Vert
_{L^{\frac{r\gamma}{r-1}}}^{1-\frac{\gamma}{2}}\leq C\left(  M,\Omega
,\left\Vert f,g\right\Vert _{L^{\infty}}\right)  \varepsilon^{\frac{1}{2}%
}\label{something_eps_2}%
\end{equation}
where%
\[
\frac{1}{p^{\ast}}=\frac{1}{2}+\frac{r-1}{r\gamma}\left(  1-\frac{\gamma}%
{2}\right)  <1\Leftrightarrow1-\frac{\gamma}{2}<\frac{1}{2}\frac{r\gamma
}{r-1}\Leftrightarrow1<\frac{\gamma}{2}\left(  1+\frac{r}{r-1}\right)  ,
\]
and the last inequality is always true.

It is easy to check that always
\[
p^{\ast}<2<q^{\ast}%
\]
and thus, interpolating between, on the one hand $\left(  \text{\ref{eps_0}%
}\right)  $ and on the other hand between $\left(  \text{\ref{something_eps_1}%
}\right)  $ for $\gamma\geq2$ or $\left(  \text{\ref{something_eps_2}}\right)
$ for $\gamma\in(1,2)$ we obtain that for any $\eta_{1}>0$ there exists
$\alpha_{1}\left(  \eta_{1}\right)  \in\left(  0,\frac{1}{2}\right)  $ such
that we have%
\begin{equation}
\lim_{\varepsilon\rightarrow0}\varepsilon\left\Vert \nabla\varrho\right\Vert
_{L^{q^{\ast}-\eta}}\leq C\left(  M,\Omega,\left\Vert f,g\right\Vert
_{L^{\infty}}\right)  \varepsilon^{\alpha_{1}\left(  \eta_{1}\right)
}.\label{inegalitate}%
\end{equation}
The estimates for $\varepsilon\nabla\varrho\nabla u$ and $\varepsilon
\nabla\varrho\otimes u$ are obtained by using $\left(  \text{\ref{inegalitate}%
}\right)  $ along with H\"{o}lder's inequality and the Sobolev embedding. We
leave the details as an exercise for the reader. This concludes the proof of
Theorem \ref{Existence_1_level}.
\end{proof}

\subsection{Limit passage with $\varepsilon\rightarrow0$%
\label{Limit passage with epsilon}}

In this section we prove that weak solutions are obtained as limits of
solutions for the reduced regularized systems $\left(  \text{\ref{system_eps}%
}\right)  $. We formalize the result in the following:

\begin{lemma}
\label{Lemma_conv}Consider $\varepsilon\in\left(  0,1\right)  $,$d\in\left\{
2,3\right\}  $ and $r,\gamma>1$ verifying $\left(  \text{\ref{cond_1}}\right)
$ or $\left(  \text{\ref{cond_2}}\right)  $. Let $\mathbb{S}:\mathbb{R}_{\mathrm{sym}}
^{d\times d}\rightarrow\mathbb{R}_{\mathrm{sym}}^{d\times d}$ be a continuous function
verifying the conditions (\ref{growth})-(\ref{monotonicity}). Fix
$M>0$ and $f,g\in(L^{\infty}\left(  \Omega\right)  )^{d}$. For all
$\varepsilon>0$ consider $(\varrho_{\varepsilon},u_{\varepsilon}%
)_{\varepsilon>0}\in W^{2,r}\left(  \Omega\right)  \times(W_{0}^{1,r}\left(
\Omega\right)  )^{d}$ a weak solutions of $\left(  \text{\ref{system_eps}%
}\right)  $ verifying the estimates (\ref{est_eps})-(\ref{estimation_passage_limite_eps}). Then up to a subsequence
we have
\[
\left\{
\begin{aligned}
\varrho_{\varepsilon}\rightarrow\varrho \quad &\text{strongly in} \quad L^{q}(\Omega)\text{
for any }q<\frac{r\gamma}{r-1},\\[0.5em]
u_{\varepsilon}\rightarrow u \quad &\text{strongly in} \quad (L^{r_{\ast}}(\Omega))^{d},\\[0.5em]
\mathbb{D}u_{\varepsilon}\rightarrow\mathbb{D}u \quad &\text{strongly in} \quad
(L^{p}(\Omega))^{d\times d}\text{ for any }p<r.
\end{aligned}
\right.
\]
In particular, the limit $(\varrho,u)$ satisfies the weak formulation of
$\left(  \text{\ref{main}}\right)  $.
\end{lemma}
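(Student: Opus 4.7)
The strategy is to extract weak limits from the uniform bounds (\ref{estimations_niveau_eps})--(\ref{estimation_passage_limite_eps}), pass to the limit in the linear and compact nonlinear terms of (\ref{system_eps}), and then identify the weak limits of $\mS(\D u_\varepsilon)$ and $\varrho_\varepsilon^\gamma$ with $\mS(\D u)$ and $\varrho^\gamma$ via the local renormalized-energy identity sketched in the introduction, combined with the monotonicity hypothesis (\ref{monotonicity}).

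First, using (\ref{estimations_niveau_eps}) and the Rellich--Kondrachov theorem, extract a (non-relabeled) subsequence such that $\varrho_\varepsilon \rightharpoonup \varrho$ weakly in $L^{r\gamma/(r-1)}(\Omega)$, $u_\varepsilon \rightharpoonup u$ weakly in $(W_0^{1,r}(\Omega))^d$ and strongly in $(L^{r_\ast}(\Omega))^d$, $\mS(\D u_\varepsilon)\rightharpoonup \overline{\mS(\D u)}$ weakly in $L^{r/(r-1)}$ by the growth bound (\ref{growth}), and $\varrho_\varepsilon^\gamma \rightharpoonup \overline{\varrho^\gamma}$ in some Lebesgue space. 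The artificial-viscosity contributions $\varepsilon\Delta\varrho_\varepsilon$, $\varepsilon\nabla u_\varepsilon\nabla\varrho_\varepsilon$ and $\varepsilon\nabla\varrho_\varepsilon\otimes u_\varepsilon$ vanish thanks to (\ref{estimation_passage_limite_eps}), while the conditions (\ref{cond_1})--(\ref{cond_2}) are precisely tailored to produce strong convergence $\varrho_\varepsilon u_\varepsilon\otimes u_\varepsilon \to \varrho u\otimes u$ in some $L^q$. Passing to the limit in the weak formulation of (\ref{system_eps}) then yields
\[
\ddiv(\varrho u)=0,\qquad \ddiv(\varrho u\otimes u)-\ddiv\overline{\mS(\D u)}+\nabla\overline{\varrho^\gamma}=\varrho f+g \quad \text{in }\mathcal{D}'(\Omega).
\]

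The heart of the proof is the identification of the weak limits. Testing the momentum equation of (\ref{system_eps}) by $\varphi u_\varepsilon$ with $\varphi\in \mathcal{C}^\infty(\overline\Omega)$, controlling the $\varepsilon$-dependent terms by (\ref{estimation_passage_limite_eps}), and subtracting the analogous local energy obtained from testing the limit momentum equation against $\varphi u$, I obtain the distributional identity
\[
\frac{1}{\gamma-1}\ddiv(\overline{\varrho^\gamma}u)+\overline{\varrho^\gamma}\,\ddiv u+\mathcal{R}=0 \quad \text{in }\mathcal{D}'(\Omega),
\]
where $\mathcal{R}:=\overline{\mS(\D u):\D u}-\overline{\mS(\D u)}:\D u$ is a nonnegative Radon measure by (\ref{monotonicity}). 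Next I renormalize this identity with $\phi(z)=\frac{\gamma}{\gamma-1}(z+h)^{1/\gamma}$ for $h>0$, which, after a standard convolution/chain-rule justification for the steady continuity-type equation satisfied by $\overline{\varrho^\gamma}$, places me in the setting of the Egorov-based Proposition cited in the introduction, with $b=(\overline{\varrho^\gamma}+h)^{1/\gamma}$ and a reweighted measure proportional to $(\overline{\varrho^\gamma}+h)^{1/\gamma-1}\mathcal{R}$. The Proposition then produces, for every $\eta>0$, a compact $\Omega_\eta\subset\Omega$ with $|\Omega\setminus\Omega_\eta|\leq\eta$ on which the reweighted measure vanishes; sending $h\to 0$ followed by $\eta\to 0$ forces $\mathcal{R}\equiv 0$, hence $\overline{\mS(\D u):\D u}=\overline{\mS(\D u)}:\D u$ a.e. Combining with (\ref{monotonicity}) and Vitali's convergence theorem gives $\int_\Omega|\D u_\varepsilon-\D u|^\ell\,\dd x\to 0$, and interpolation against the uniform $L^r$-bound on $\D u_\varepsilon$ yields the strong convergence $\D u_\varepsilon\to \D u$ in every $L^p$ with $p<r$ as well as the identification $\overline{\mS(\D u)}=\mS(\D u)$.

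It remains to upgrade the convergence of the density. Taking the divergence of the momentum equation in (\ref{system_eps}) gives the Poisson-type relation
\[
-\Delta \varrho_\varepsilon^\gamma=\ddiv\ddiv\!\bigl(\mS(\D u_\varepsilon)-\varrho_\varepsilon u_\varepsilon\otimes u_\varepsilon\bigr)+\ddiv\!\bigl(\varrho_\varepsilon f+g-\varepsilon\nabla u_\varepsilon\nabla\varrho_\varepsilon\bigr),
\]
whose right-hand side is now strongly convergent in a suitable negative Sobolev space thanks to the strong convergence of $\D u_\varepsilon$ just obtained together with (\ref{estimation_passage_limite_eps}). Elliptic regularity then promotes $\varrho_\varepsilon^\gamma$ to strong convergence in some $L^q_{\mathrm{loc}}$, and monotonicity of $z\mapsto z^\gamma$ combined with interpolation against the uniform bound in $L^{r\gamma/(r-1)}$ gives $\varrho_\varepsilon\to\varrho$ strongly in $L^q(\Omega)$ for every $q<r\gamma/(r-1)$; passing to the limit in the weak formulation is then immediate. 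The main obstacle of the whole argument is the middle step: making rigorous sense of the formal ``multiplication'' of the measure-valued identity by the Lebesgue-integrable singular weight $(\overline{\varrho^\gamma}+h)^{1/\gamma-1}$, which is exactly the role played by the Egorov-type Proposition.
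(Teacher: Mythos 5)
Your plan follows the paper's architecture (weak limits, local energy identity minus limit energy, positive defect measure, Egorov-based Proposition \ref{propo_limit}, monotonicity, then the inverse-Laplacian representation of the pressure), but two steps as written do not hold. First, a smaller point: the limit identity cannot be stated with $\mathcal{R}=\overline{\mS(\D u):\D u}-\overline{\mS(\D u)}:\D u$ alone. The estimate (\ref{estimation_passage_limite_eps}) kills $\varepsilon\nabla\varrho_\varepsilon\nabla u_\varepsilon$ and $\varepsilon\nabla\varrho_\varepsilon\otimes u_\varepsilon$, but the renormalization of the continuity equation with $\varrho^\gamma$ produces the term $\frac{4\varepsilon}{\gamma}|\nabla\varrho_\varepsilon^{\gamma/2}|^2$, which is only \emph{bounded} in $L^1$ by (\ref{estimations_niveau_eps}) and therefore survives in the limit as a nonnegative measure $\Theta$ (cf. Proposition \ref{Prop_identity}); the identity holds with the measure $\mathcal{R}+\frac{4}{\gamma}\Theta$. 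This is harmless (the sum is still nonnegative and Proposition \ref{propo_limit} applies to it, with $b=\overline{\varrho^\gamma}$ directly — note that the "reweighted measure $(\overline{\varrho^\gamma}+h)^{1/\gamma-1}\mathcal{R}$" you invoke is not a well-defined object; the multiplication is performed inside the proof of the Proposition after mollifying $b$), but it must be accounted for.

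The genuine gap is the inference ``sending $h\to0$ followed by $\eta\to0$ forces $\mathcal{R}\equiv0$''. Proposition \ref{propo_limit} only yields, for each $\eta>0$, a compact $\Omega_\eta$ with $|\Omega\setminus\Omega_\eta|\leq\eta$ and $\mathcal{R}(\Omega_\eta)=0$. Since $\mS_\varepsilon:\D u_\varepsilon$ is bounded only in $L^1$, the defect measure may be singular with respect to Lebesgue measure (think of a Dirac mass, which each Egorov set $\Omega_\eta$ may simply avoid), so smallness of $|\Omega\setminus\Omega_\eta|$ gives no control of $\mathcal{R}(\Omega\setminus\Omega_\eta)$ and letting $\eta\to0$ does not give $\mathcal{R}(\Omega)=0$; accordingly, your Vitali step producing $\int_\Omega|\D u_\varepsilon-\D u|^\ell\,\dd x\to0$ on all of $\Omega$ is unjustified. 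The paper never claims $\mathcal{R}=0$: it uses monotonicity only on $\Omega_\eta$ to get $\limsup_\varepsilon\|\D u_\varepsilon-\D u\|_{L^\ell(\Omega_\eta)}=0$, bounds the complement by H\"older and the uniform $L^r$ bound,
\begin{equation*}
\limsup_{\varepsilon\to0}\|\D u_\varepsilon-\D u\|_{L^p(\Omega)}\leq C\,\eta^{\frac{r-p}{pr}},\qquad p<r,
\end{equation*}
and then lets $\eta\to0$. Your argument must be repaired along these lines; once this is done, the remaining steps (Korn, a.e. convergence and continuity of $\mS$ to identify $\overline{\mS(\D u)}=\mS(\D u)$, and the Newtonian-potential representation of $\varphi\varrho_\varepsilon^\gamma$ — where, by the way, only \emph{weak} convergence of $\varrho_\varepsilon u_\varepsilon\otimes u_\varepsilon$ is available and needed at that stage, not the strong convergence you assert) go through as in the paper.
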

\begin{proof}
    \underline{First step.} From the estimates $\left(
\text{\ref{estimations_niveau_eps}}\right)  $ we obtain the existence
of
\[ \begin{aligned}
    (\varrho, u) &\in L^{\frac{r\gamma}{r-1}(\Omega)}\times (W_0^{1,r}(\Omega))^d, \\
    \overline{\varrho^\gamma} &\in L^{\frac{r}{r-1}}(\Omega), \\
    \overline{\mS} &\in (L^{\frac{r}{r-1}}(\Omega))^{d\times d}
\end{aligned}\]
and positive measures%
\[
\overline{\mathbb{S}:\mathbb{D}u},\Theta\in\mathcal{M}%
(\Omega),
\]
such that up to a subsequence (denoted again by $\varepsilon$), we have that
\[ \left\{
\begin{aligned}
\varrho_{\varepsilon}\rightharpoonup\varrho \quad &\text{weakly in} \quad L^{\frac{r\gamma
}{r-1}}(\Omega),\\[0.5em]
\varrho_{\varepsilon}^{\gamma}\rightharpoonup\overline{\varrho^{\gamma}} \quad &\text{weakly in} \quad L^{\frac{r}{r-1}}(\Omega),\\[0.5em]
u_{\varepsilon}\rightarrow u \quad &\text{strongly in} \quad (L^{r}(\Omega))^{d},\\[0.5em]
\nabla u_{\varepsilon}\rightharpoonup\nabla u \quad &\text{weakly in} \quad (L^{r}%
(\Omega))^{d\times d},\\[0.5em]
\mathbb{S}_{\varepsilon}\rightharpoonup\overline{\mathbb{S}} \quad &\text{weakly in} \quad (L^{\frac{r}{r-1}}(\Omega))^{d\times d},\\[0.5em]
\mathbb{S}_{\varepsilon}:\mathbb{D}u_{\varepsilon}\rightharpoonup
\overline{\mathbb{S}:\mathbb{D}u} \quad &\text{weakly-$\ast$ in} \quad
\mathcal{M}(\Omega),\\[0.5em]
\varepsilon\left\vert \nabla\varrho^{\frac{\gamma}{2}}\right\vert
^{2}\rightharpoonup\Theta \quad &\text{weakly-$\ast$ in} \quad \mathcal{M}(\Omega),
\end{aligned}
\right.
\]
where $\mathcal{M}(\Omega)=\left(  \mathcal{C}\left(  \overline{\Omega
}\right)  \right)  ^{\prime}$ is the space, of Radon measures on $\Omega$ while
$\mathcal{C}\left(  \overline{\Omega}\right)  $ is the set of continuous
functions on $\overline{\Omega}$. From the monotonicity of the stress tensor
we also have the inequality
\[
\overline{\mathbb{S}:\mathbb{D}u}\geq\overline{\mathbb{S}}:\mathbb{D}u.
\]
Moreover, using $\left(  \text{\ref{estimation_passage_limite_eps}}\right)  $
we have that
\[
\lim_{\varepsilon\rightarrow0}\varepsilon\int_{\Omega}|\nabla\varrho
_{\varepsilon}\cdot\nabla u_{\varepsilon}|\;\mathrm{d}x=0.
\]
Passing to the limit in the equations verified by $\left(  \varrho
_{\varepsilon},u_{\varepsilon}\right)  $ we obtain that
\begin{equation}
\left\{
\begin{array}
[c]{r}%
\mathrm{div\,}(\varrho u)=0,\\
\mathrm{div\,}(\varrho u\otimes u)-\mathrm{div\,}\overline{\mathbb{S}}%
+\nabla\overline{\varrho^{\gamma}}=\varrho f+g.
\end{array}
\right.  \label{limit}%
\end{equation}

\underline{Second step.} Having (\ref{limit}), it is now enough to show that in fact $\overline{\mathbb{S}}=\mathbb{S}$ and
$\overline{\varrho^{\gamma}}=\varrho^{\gamma}$. It is also equivalent
to the strong convergence of the respective sequences. To do that, we first show the following:

\begin{proposition}
\label{Prop_identity} The identity
\begin{equation}
\frac{-1}{\gamma-1}\int_{\Omega}\overline{\varrho^{\gamma}}u\cdot\nabla
\varphi \;\mathrm{d}x+\int_{\Omega}\overline{\varrho^{\gamma}}\mathrm{div\,}%
u\,\varphi \;\mathrm{d}x+\int_{\Omega}\varphi\;\mathrm{d}\left[  \overline
{\mathbb{S}:\mathbb{D}u}-\overline{\mathbb{S}}:\mathbb{D}u\right]  +\frac
{4}{\gamma}\int_{\Omega}\varphi\;\mathrm{d}\Theta=0\text{,}\label{identity_rho}%
\end{equation}
holds for all $\varphi\in C^{1}\left(  \overline{\Omega}\right)  $.
\end{proposition}
\begin{proof}
In view of Proposition \ref{Prop_ren2} from the Appendix B., the proof of
which is a minor modification of Lemma $3.3.$ page $160$ of
\cite{NovotnyStraskraba2004}, $\left(  \varrho,u\right)  $ constructed above
verify the renormalized continuity equation:%
\begin{equation}
-
{\displaystyle\int_{\Omega}}
b(\varrho)u\cdot\nabla\varphi\;\mathrm{d}x+
{\displaystyle\int_{\Omega}}
(\left(  b^{\prime}(\varrho)-\varrho b\left(  \varrho\right)  \right)
\mathrm{div\,}u\cdot\varphi\;\mathrm{d}x=0,\label{renorm_up_to_boundary1}%
\end{equation}
for all $\varphi\in\mathcal{C}^{1}\left(  \overline{\Omega}\right)  $. Given
the fact that $\left(  \varrho,u\right)  \in L^{\frac{r\gamma}{r-1}}%
(\Omega)\times(W_{0}^{1,r}(\Omega))^{d}$ we can write $\left(
\text{\ref{renorm_up_to_boundary1}}\right)  $ also for $b\left(
\varrho\right)  =\varrho^{\gamma}$ which gives:
\begin{equation}
-
{\displaystyle\int_{\Omega}}
\varrho^{\gamma}u\cdot\nabla\varphi\;\mathrm{d}x+\left(  \gamma-1\right)
{\displaystyle\int_{\Omega}}
\varrho^{\gamma}\mathrm{div\,}u\cdot\varphi\;\mathrm{d}%
x=0,\label{renorm_pressure}%
\end{equation}
for all $\varphi\in\mathcal{C}^{1}\left(  \overline{\Omega}\right)  $. Observe
that the $W^{2,r}$-regularity of $\varrho_{\varepsilon}$ along with a density argument enables us to use $\gamma\varrho_{\varepsilon
}^{\gamma-1}\varphi$ as a test function in the first equation of $\left(
\text{\ref{system_eps}}\right)  $ for all $\varphi\in\mathcal{C}^{1}\left(
\overline{\Omega}\right)  $ and as such to obtain that
\begin{equation}
-
{\displaystyle\int_{\Omega}}
\varrho_{\varepsilon}^{\gamma}u_{\varepsilon}\cdot\nabla\varphi\;\mathrm{d}x+\left(
\gamma-1\right)
{\displaystyle\int_{\Omega}}
\varrho_{\varepsilon}^{\gamma}\mathrm{div\,}u_{\varepsilon}\cdot
\varphi\;\mathrm{d}x=-\varepsilon\gamma
{\displaystyle\int_{\Omega}}
\varrho_{\varepsilon}^{\gamma-1}\nabla\varrho_{\varepsilon}\nabla
\varphi\;\mathrm{d}x-4\varepsilon\frac{\gamma-1}{\gamma}
{\displaystyle\int_{\Omega}}
|\nabla\varrho_{\varepsilon}^{\gamma/2}|^{2}\varphi\;\mathrm{d}%
x\label{renorm_epsilon}%
\end{equation}
for all $\varphi\in\mathcal{C}^{1}\left(  \overline{\Omega}\right)  $. The
fact that $u_{\varepsilon}\in(W_{0}^{1,r}\left(  \Omega\right)  )^{d}$ along
with the restrictions $\left(  \text{\ref{restriction_r_gamma_1}}\right)
$-$\left(  \text{\ref{restriction_r_gamma_2}}\right)  $ ensure that we can
test the momentum equation by $\varphi u_{\varepsilon}$ and, using a density
argument, the first equation of $\left(  \text{\ref{system_eps}}\right)  $
with $\frac{\left\vert u^{\varepsilon}\right\vert ^{2}}{2}\varphi$ for any
$\varphi\in\mathcal{C}^{1}\left(  \overline{\Omega}\right)  $. Taking the sum
and using $\left(  \text{\ref{renorm_epsilon}}\right)  $, we derive the local
energy equality
\begin{align}\label{energy_eq_eps}
\int_{\Omega}
\left(  -\frac{1}{2}\varrho_{\varepsilon}|u_{\varepsilon}|^{2}u_{\varepsilon} -\frac{\varepsilon}{2}|u_\varepsilon|^2\nabla\varrho_\varepsilon
+\mathbb{S}_{\varepsilon}u_{\varepsilon}-\frac{\gamma}{\gamma-1}%
\varrho_{\varepsilon}^{\gamma}u_{\varepsilon}+\frac{\varepsilon\gamma}%
{\gamma-1}\varrho_{\varepsilon}^{\gamma/2}\nabla\varrho_{\varepsilon}%
^{\gamma/2}\right)  \nabla\varphi\text{ }\mathrm{d}x \\
+\int_{\Omega}%
\mathbb{S}_{\varepsilon}  :\mathbb{D}u_{\varepsilon}\cdot\varphi\text{
}\mathrm{d}x\nonumber
 =-\frac{4\varepsilon}{\gamma}%
%TCIMACRO{\dint _{\Omega}}%
%BeginExpansion
{\displaystyle\int_{\Omega}}
%EndExpansion
|\nabla\varrho_{\varepsilon}^{\gamma/2}|^{2}\mathrm{d}x %
\end{align}
for all $\varphi\in\mathcal{C}^{1}\left(  \overline{\Omega}\right)  $. By virtue of strong convergence of
$u_{\varepsilon}$, estimates (\ref{estimation_passage_limite_eps}) and
\[
\varepsilon\int_{\Omega}\varrho_{\varepsilon}^{\gamma/2}|\nabla\varrho
_{\varepsilon}^{\gamma/2}|\;\mathrm{d}x\leq\varepsilon\Vert\varrho
_{\varepsilon}\Vert_{L^{\gamma}}^{\gamma/2}\Vert\nabla\varrho_{\varepsilon
}^{\gamma/2}\Vert_{L^{2}}\rightarrow0,
\]
after passing to the limit in $\left(  \text{\ref{energy_eq_eps}}\right)  $ we
obtain
\begin{equation}%
%TCIMACRO{\dint _{\Omega}}%
%BeginExpansion
{\displaystyle\int_{\Omega}}
%EndExpansion
\left(  -\frac{1}{2}\varrho|u|^{2}u+\mathbb{\bar{S}}u-\frac{\gamma}{\gamma-1}%
\overline{\varrho^{\gamma}}u\right)  \nabla\varphi\text{ }\mathrm{d}%
x+\int_{\Omega}\varphi\text{ }\mathrm{d}\left[  \overline{\mathbb{S}%
:\mathbb{D}u}\right]  =-\frac{4}{\gamma}\int_{\Omega}\varphi\text{ }%
\mathrm{d}\Theta,\label{st_energy1}%
\end{equation}
for all $\varphi\in\mathcal{C}^{1}\left(  \overline{\Omega}\right)  $. Now
analogically testing the momentum equation in (\ref{limit}) by $\varphi u$,
$\varphi\in\mathcal{C}^{1}\left(  \overline{\Omega}\right)  $ we get
\begin{equation}%
{\displaystyle\int_{\Omega}}
\left(  -\varrho|u|^{2}u+\mathbb{\bar{S}}u-\overline{\varrho^{\gamma}%
}u\right)  \nabla\varphi\text{ }\mathrm{d}x-\int_{\Omega}\overline
{\varrho^{\gamma}}\mathrm{div\,}u\cdot\varphi\text{ }\mathrm{d}x+\int_{\Omega
}\mathbb{\bar{S}}:\mathbb{D}u\varphi\text{ }\mathrm{d}x=0.\label{st_energy2}%
\end{equation}
for all $\varphi\in\mathcal{C}^{1}\left(  \overline{\Omega}\right)  $.
Subtracting $\left(  \text{\ref{st_energy2}}\right)  $ from $\left(
\text{\ref{st_energy1}}\right)  $, we get to the desired conclusion.
\end{proof}
\underline{Third step.} In order to obtain strong convergence for the $\nabla
u_{\varepsilon}$, we prove the following

\begin{proposition}
\label{propo_limit}
Consider $\gamma>1,$ $u\in(W_{0}^{1,r}\left(
\Omega\right)  )^{d}$, $b\in L^{\frac{r}{r-1}}\left(  \Omega\right)$ nonnegative and a positive measure $\mathcal{R}\in\mathcal{M}\left(  \Omega\right)  $ such that the identity
\begin{equation}
\frac{-1}{\gamma-1}\int_{\Omega}b u\nabla\varphi\text{ }\mathrm{d}%
x+\int_{\Omega}b\mathrm{div\,}u\cdot\varphi\text{ }\mathrm{d}%
x+\int_{\Omega}\varphi\text{ }\mathrm{d}\mathcal{R}=0\text{,}\label{identity}%
\end{equation}
holds for all $\varphi\in\mathcal{C}^{1}(\overline{\Omega})$. Then, for all
$\eta>0$ there exists a compact set $\Omega_{\eta}\subset\Omega$ such that
$\left\vert \Omega\backslash\Omega_{\eta}\right\vert \leq\eta$ and $\mathcal{R}\left(
\Omega_{\eta}\right)  =0$.
\end{proposition}
\begin{proof}
Since $\left(  \text{\ref{identity}}\right)  $ holds with $\varphi
\in\mathcal{C}^{1}(\overline{\Omega})$, by denoting by $\tilde{b},\tilde
{u},\tilde{\mathcal{R}}$ the extensions by $0$ outside $\Omega$, i.e.%
\[
\forall\varphi\in\mathcal{C}_{0}(\mathbb{R}^{d}):\left\langle \tilde{\mathcal{R}
},\varphi\right\rangle =\left\langle \mathcal{R},\varphi_{|\overline{\Omega}%
}\right\rangle,
\]
we have that%
\begin{equation}
\operatorname{div}(\tilde{b}\tilde{u})+\left(  \gamma-1\right)
\tilde{b}\operatorname{div}\tilde{u}+\left(  \gamma-1\right)\tilde{\mathcal{R}}=0\text{ in }%
\mathcal{E}^{\prime}(\mathbb{R}^{d}).\label{identity_delta}%
\end{equation}
In the following, in order to ease the reading we will drop the tilde overscript.

For a standard mollifier $\omega_{\alpha}=\frac{1}{\alpha^{d}}\omega\left(
\frac{\cdot}{\alpha}\right)  $, $\alpha>0,$ with $\omega\in\mathcal{C}%
^{\infty}\left(  \mathbb{R}^{d}\right)  $ radial function with support in the
unit ball, we take
\[
b_{\alpha}=\omega_{\alpha}\ast b
\]
and
\[
r_{\alpha}=\mathrm{div\,}[b_{\alpha}u-\omega_{\alpha}\ast(b
u)]+(\gamma-1)[b_{\alpha}\mathrm{div\,}u-\omega_{\alpha}\ast
(b\mathrm{div\,}u)].
\]
Then from $\left(  \text{\ref{identity_delta}}\right)  $ we get
\begin{equation}
\mathrm{div\,}(b_{\alpha}u)+(\gamma-1)b_{\alpha}\mathrm{div\,}%
u+(\gamma-1)\omega_{\alpha}\ast \mathcal{R}=r_{\alpha}.\label{avant_multiplicare}%
\end{equation}
Since $\mathcal{R}\in\mathcal{E}^{\prime}(\mathbb{R}^{d})$ is a compactly supported
distribution on $\mathbb{R}^{n}$ and $\omega_{\alpha}\in\mathcal{C}%
_{c}^{\infty}$ we have that $\omega_{\alpha}\ast \mathcal{R}\in\mathcal{C}_{c}^{\infty
}\left(  \mathbb{R}^{d}\right)  $ and the above relation holds almost everywhere on
$\Omega$. Let $h\in\left(  0,1\right)  $ and multiply $\left(
\text{\ref{avant_multiplicare}}\right)  $ by $\frac{1}{\gamma}(b_{\alpha
}+h)^{1/\gamma-1}$. We obtain that
\[
\mathrm{div\,}\left(  (b_{\alpha}+h)^{1/\gamma}u\right)  +\frac{\gamma
-1}{\gamma}(b_{\alpha}+h)^{1/\gamma-1}\omega_{\alpha}\ast \mathcal{R}
=(b_{\alpha}+h)^{1/\gamma-1}h\mathrm{div\,}u+\frac{1}{\gamma}%
(b_{\alpha}+h)^{1/\gamma-1}r_{\alpha}%
\]
and after integration over $\mathbb{R}^d$ we get
\begin{align*}
\frac{\gamma-1}{\gamma}\int_{\Omega}\omega_{\alpha}\ast(b_{\alpha
}+h)^{1/\gamma-1}\mathrm{d} \mathcal{R} &  =\int_{\Omega}(b_{\alpha}%
+h)^{1/\gamma-1}h\mathrm{div\,}u\text{ }\mathrm{d}x+\frac{1}{\gamma}%
\int_{\mathbb{R}^d}(b_{\alpha}+h)^{1/\gamma-1}r_{\alpha}\;\mathrm{d}x.\\
&  \leq h^{\frac{1}{\gamma}}\int_{\Omega}\left\vert \mathrm{div\,}u\right\vert
\mathrm{d}x+\frac{1}{\gamma}\int_{\mathbb{R}^d}(b_{\alpha}+h)^{1/\gamma
-1}r_{\alpha}\;\mathrm{d}x
\end{align*}
Now we want to pass to the limit with $\alpha\rightarrow0$. By Friedrich's
lemma \ref{Prop_ren1} we know that $r_{\alpha}\rightarrow0$ in a suitable
$L^{p}$ space, therefore the last term converges to $0$. Thus, we obtain that for all $h>0$
\begin{equation}
\limsup\limits_{\alpha\rightarrow0}\int_{\Omega}\omega_{\alpha}\ast
(b_{\alpha}+h)^{1/\gamma-1}\mathrm{d}\mathcal{R}\leq h^{\frac{1}{\gamma}}%
\int_{\Omega}\left\vert \mathrm{div\,}u\right\vert \mathrm{d}%
x.\label{identity_avant passer limite}%
\end{equation}

Next, since up to a subsequence
\[
\omega_{\alpha}\ast(b_{\alpha}+h)^{1/\gamma-1}\rightarrow(b
+h)^{1/\gamma-1}\quad\text{a.e. in }\Omega,
\]
by Egorov's theorem and inner regularity of the Lebesgue measure there exists
a compact subset $\Omega_{\eta,h}\subset\Omega$ such that $|\Omega
\setminus\Omega_{\eta,h}|<\eta2^{-\frac{1}{h}}$ and
\[
\omega_{\alpha}\ast(b_{\alpha}+h)^{1/\gamma-1}\rightarrow(b+h)^{1/\gamma-1}%
\quad\text{uniformly in }\Omega_{\eta,h}.
\]
In particular, for all $\eta_{1}>0$, there exists
$\alpha\left(  \eta_{1}\right)  >0$ such that for $\alpha<\alpha(\eta_1)$
\[
(b+h)^{1/\gamma
-1}\leq\omega_{\alpha}\ast(b_{\alpha}+h)^{1/\gamma-1}+\eta_{1} \quad \text{on} \quad \Omega_{\eta,h}.
\]

Fix $n\in\mathbb{N}$, $\eta_{1}>0$ and consider $\alpha\left(  \eta_{1}\right)  $ as above. On the set $\left\{  x\in\Omega:b<n\right\}  \cap\Omega_{\eta,h}$ we
have that%
\[
(n+1)^{1/\gamma-1}\leq(b+h)^{1/\gamma-1}\leq\omega_{\alpha}\ast
(b_{\alpha}+h)^{1/\gamma-1}+\eta_{1}%
\]
thus, we have that for all $x\in\Omega$%
\[
(n+1)^{1/\gamma-1}\mathbbm{1}_{\left\{  b<n\right\}  \cap\Omega_{\eta,h
}}\leq\omega_{\alpha}\ast(b_{\alpha}+h)^{1/\gamma-1}+\eta_{1}.
\]
Since $(b+h)^{1/\gamma-1}$ is obtained by uniform convergence of continuous functions on $\Omega_{\eta,h}$, we conclude that the set $\left\{  x\in\Omega:b<n\right\} \cap\Omega_{\eta,h}$ is open in $\Omega_{\eta,h}$ with the relative topology and thus it is the intersection of an open set of $\mathbb{R}^{d}$ with the compact set $\Omega_{\eta,h}$ and therefore it is $\mathcal{R}$-measurable. By integration we have that%
\[
(n+1)^{1/\gamma-1}\mathcal{R}\left(  \left\{  \delta<n\right\}  \cap\Omega_{\eta,h
}\right)  \leq\int_{\Omega}\omega_{\alpha}\ast(b_{\alpha}+h)^{1/\gamma
-1}\mathrm{d}\mathcal{R}+\eta_{1}\mathcal{R}\left(  \Omega\right)  .
\]
Taking $\liminf_{\alpha\to 0}$, since $\eta_{1}>0$ is
arbitrary, we obtain that%
\begin{equation}
(n+1)^{1/\gamma-1}\mathcal{R}\left(  \left\{  b<n\right\}  \cap\Omega_{\eta,h
}\right)  \leq\liminf\limits_{\alpha\rightarrow0}\int_{\Omega}\omega_{\alpha
}\ast(b_{\alpha}+h)^{1/\gamma-1}d\mathcal{R}.\label{inegalitate_masuri}%
\end{equation}
Combining $\left(  \text{\ref{inegalitate_masuri}}\right)  $ with $\left(
\text{\ref{identity_avant passer limite}}\right)  $, we obtain%
\begin{equation}
(n+1)^{1/\gamma-1}\mathcal{R}\left(  \left\{  b<n\right\}  \cap\Omega_{\eta,h
}\right)  \leq \frac{\gamma}{\gamma-1}h^{\frac{1}{\gamma}}\int_{\Omega}\left\vert \mathrm{div\,}%
u\right\vert \mathrm{d}x.
\label{avant_passer}
\end{equation}
Now, for each $j\in\mathbb{N}^*$ consider the above construction with $h=2^{-j}$, that is let $\Omega_{\eta,2^{-j}}$ be compact such that $|\Omega \setminus\Omega_{\eta,2^{-j}} |\leq\eta2^{-j}$, verifying \eqref{avant_passer} with $h=2^{-j}$, and consider $\Omega_\eta:=\bigcap_{j\in\mathbb{N}^*}\Omega_{\eta,2^{-j}}$. Then, $\Omega_\eta$ is compact and $|\Omega \setminus\Omega_\eta |\leq\eta$. 
From \eqref{avant_passer} it follows that
\[ (n+1)^{1/\gamma-1}\mathcal{R}\left(  \left\{  b<n\right\}  \cap\Omega_{\eta
}\right)  \leq \frac{\gamma}{\gamma-1}h^{\frac{1}{\gamma}}\int_{\Omega}\left\vert \mathrm{div\,}%
u\right\vert \mathrm{d}x \]
% Using the above inequality we obtain that
% \[
% (n+1)^{1/\gamma-1}\mathcal{R}\left(  \left\{  b<n\right\}  \cap\Omega_{\eta
% }\right)  \leq0.
% \]
and therefore, taking $h\to\infty$ and owing to the positivity of $\mathcal{R}$, we obtain
\[
\mathcal{R}\left(  \left\{  b<n\right\}  \cap\Omega_{\eta}\right)  =0.
\]
In consequence, since $n$ was arbitrary,
\[
\mathcal{R}\left(  \Omega_{\eta}\right)  =0.
\]
This concludes the proof of Proposition \ref{propo_limit}.
\end{proof}

\underline{Fourth step : strong convergence of $\nabla u_{\varepsilon}$ and
$\varrho_{\varepsilon}$.} Owing to Propositions \ref{Prop_identity} and
\ref{propo_limit}, for all $\eta>0$ we obtain the existence of of a compact set
such that $\left\vert \Omega\backslash\Omega_{\eta}\right\vert \leq\eta$ and%
\[
\left[  \overline{\mathbb{S}:\mathbb{D}u}-\overline{\mathbb{S}}:\mathbb{D}%
u\right]  \left(  \Omega_{\eta}\right)  =0.
\]
Owing to monotonicity condition verified by $\mathbb{S}$ we have that%
\[
\limsup_{\varepsilon\rightarrow0}\left\Vert \mathbb{D}u_{\varepsilon
}-\mathbb{D}u\right\Vert _{L^{\ell}\left(  \Omega_{\eta}\right)  }^{\ell}%
\leq\left[  \overline{\mathbb{S}:\mathbb{D}u}-\overline{\mathbb{S}}%
:\mathbb{D}u\right]  \left(  \Omega_{\eta}\right)  =0.
\]
Then, for all $p<r$
\begin{align*}
\Vert\mathbb{D}u_{\varepsilon}-\mathbb{D}u\Vert_{L^{p}(\Omega)} &  \leq
\Vert\mathbb{D}u_{\varepsilon}-\mathbb{D}u\Vert_{L^{p}(\Omega_{\eta})}%
+\Vert\mathbb{D}u_{\varepsilon}-\mathbb{D}u\Vert_{L^{p}(\Omega\setminus
\Omega_{\eta})}\\
&  \leq C(\Omega)\Vert\mathbb{D}u_{\varepsilon}-\mathbb{D}u\Vert_{L^{r}%
(\Omega_{\eta})}+(\Vert\mathbb{D}u_{\varepsilon}\Vert_{L^{r}(\Omega)}%
+\Vert\mathbb{D}u\Vert_{L^{r}(\Omega)})|\Omega\setminus\Omega_{\eta}%
|^{\frac{r-p}{pr}}%
\end{align*}
and taking $\limsup_{\varepsilon\rightarrow0}$ on both sides we obtain
\[
\limsup_{\varepsilon\rightarrow0}\Vert\mathbb{D}u_{\varepsilon}-\mathbb{D}%
u\Vert_{L^{p}(\Omega)}\leq C\eta^{\frac{r-p}{pr}}.
\]
Since $\eta$ is arbitrary, this finally gives us
\[
\mathbb{D}u_{\varepsilon}\rightarrow\mathbb{D}u\quad\text{in}\quad
L^{p}(\Omega).
\]
Using Korn's inequality, we obtain the convergence of the whole gradient. Since up to a subsequence $\nabla u$ converges almost everywhere with respect to each coordinate, owing to the continuity of $\mathbb{S}$ we have that $\mathbb{S}%
_{ij}\left(  \mathbb{D}u_{\varepsilon}\right)  \rightarrow\mathbb{S}%
_{ij}\left(  \mathbb{D}u\right)  $.\footnote{We denote $(\mS(\D u))_{ij}=\mS_{ij}(\D  u)$, $i,j=1,\dots,d$}

For the convergence of $\varrho_{\varepsilon}$, let us localize the momentum
equation and then take the divergence of both sides. For any $\varphi\in
C_{0}^{\infty}(\Omega)$, we obtain that
\[
\Delta(\varphi\varrho_{\varepsilon}^{\gamma}) = -\mathrm{div\,}(\varphi
\varrho_{\varepsilon}u_{\varepsilon}\nabla u_{\varepsilon}) + \mathrm{div\,}%
(\varphi\mathrm{div\,}\mathbb{S}_{\varepsilon}) -\varepsilon\ddiv(\varphi\nabla\varrho_\varepsilon\nabla u_\varepsilon) + \ddiv(\varphi(\varrho_\varepsilon f_\varepsilon+g_\varepsilon)) + \varrho_{\varepsilon
}^{\gamma}\Delta\varphi+ \nabla\varphi\cdot\nabla\varrho_{\varepsilon}%
^{\gamma}.
\]

Therefore
\[\begin{aligned}
\varphi\varrho_{\varepsilon}^{\gamma}= & -\Delta^{-1}\mathrm{div\,}%
(\varphi\varrho_{\varepsilon}u_{\varepsilon}\nabla u_{\varepsilon}%
)+\Delta^{-1}\mathrm{div\,}(\varphi\mathrm{div\,}\mathbb{S}_{\varepsilon
}) - \varepsilon\Delta^{-1}\ddiv(\varphi\nabla\varrho\nabla u)+ \Delta^{-1}\ddiv(\varphi(\varrho f+g)) \\
&+\Delta^{-1}(\varrho_{\varepsilon}^{\gamma}\Delta\varphi)+\Delta^{-1}%
(\nabla\varphi\cdot\nabla\varrho_{\varepsilon}^{\gamma})
\end{aligned}\]
and integrating by parts the right hand side, we get
\begin{equation}\label{p}
\begin{aligned}
\varphi\varrho_{\varepsilon}^{\gamma}(t,x) =& -\int_{\Omega}\nabla K(x-y)\varphi
(y)\varrho_{\varepsilon}(y)u_{\varepsilon}(y)\nabla u_{\varepsilon
}(y)\;\mathrm{d}y+p.v.\int_{\Omega}\varphi(y)\mathbb{S}_{\varepsilon
}(y):\nabla^{2}K(x-y)\;\mathrm{d}y \\
&-\int_{\Omega}\mathbb{S}_{\varepsilon}(y):\nabla\varphi(y)\otimes\nabla
K(x-y)\;\mathrm{d}y
+\varepsilon\int_\Omega \nabla K(x-y)\varphi(y)\nabla\varrho_\varepsilon(y)\nabla u_\varepsilon(y)\;\dd y \\
&- \int_\Omega \nabla K(x-y)\varphi(y)(\varrho_\varepsilon(y)f_\varepsilon(y)+g_\varepsilon(y))\;\dd y +\int_{\Omega}K(x-y)\varrho_{\varepsilon}^{\gamma
}(y)\Delta\varphi(y)\;\mathrm{d}y \\
&+\int_{\Omega}\varrho_{\varepsilon}^{\gamma}(y)\nabla K(x-y)\cdot
\nabla\varphi(y)\;\mathrm{d}y-\int_{\Omega}\varrho_{\varepsilon}^{\gamma
}(y)K(x-y)\Delta\varphi(y)\;\mathrm{d}y,
\end{aligned}
\end{equation}
where $K(x)=\frac{c_{d}}{|x|^{d-2}}$ is the Newtonian potential with the usual
modification if $d=2$. From the elliptic estimates and the strong convergence
of $\mathbb{S}_{\varepsilon}$, it follows that all terms in the right hand
side of (\ref{p}) converge strongly in $L^{p}(\Omega)$ for some $p<r$. Indeed,
since $\nabla^{2}K$ is a Calder\'{o}n-Zygmund kernel, the convergence of the
terms containing the stress tensor follows from the strong convergence of
$\mathbb{S}_{\varepsilon}$. On the other hand, since $\varrho_{\varepsilon
}u_{\varepsilon}\nabla u_{\varepsilon}$ and $\varrho_{\varepsilon}^{\gamma}$
are uniformly bounded in $L^{q}$ for some $q>1$, the rest of the terms in
(\ref{p}) are bounded in $W^{1,q}$ and thus they converge strongly in
$L^{p}(\Omega)$ for some $p>1$. In conclusion,
\[
\varphi\varrho_{\varepsilon}^{\gamma}\rightarrow\varphi\overline
{\varrho^{\gamma}}\quad\text{in}\quad L^{p}(\Omega)
\]
as well, and in consequence $\varrho_{\varepsilon}^{\gamma}$ converges
strongly in $L^{1}(\Omega)$. Finally, using the fact that $\varrho
_{\varepsilon}\rightharpoonup\varrho$ and
\[
\varrho_{\varepsilon}=(\varrho_{\varepsilon}^{\gamma})^{1/\gamma}%
\rightarrow\overline{\varrho^{\gamma}}^{1/\gamma},
\]
we get that $\overline{\varrho^{\gamma}}=\varrho^{\gamma}$. This concludes the
proof of Lemma \ref{Lemma_conv}, and in consequence Theorem \ref{th_st}.
\end{proof}
The arguments and Proposition \ref{Prop_identity} remain valid,
\textit{mutatis mutandis}, in the case of Theorem \ref{Stability}. 

\section{Proof of Theorem \ref{Viscoplastic_is_fantastic}%
\label{Proof of Theorem Viscoplastic}}

In this section, we prove Theorem \ref{Viscoplastic_is_fantastic}. We will use
the same construction for the singular part of the viscous stress tensor as
the one found in \cite{malek2005herschel}. A weak solution will be obtained as
a limit of a subsequence of solutions for the following family of approximate
systems:%
\begin{equation}
\left\{
\begin{array}
[c]{ll}%
\alpha\varrho+\operatorname{div}\left(  \varrho u\right)  =\alpha
\check{\varrho} & \text{in }\Omega,\\
\beta\rho u+\operatorname{div}\left(  \varrho u\otimes u\right)
-\operatorname{div}(\left\vert \mathbb{D}u\right\vert ^{r-2}\mathbb{D}%
u)-\operatorname{div}\mathbb{P}_{\varepsilon}+\nabla\varrho^{\gamma}=\rho
f+g & \text{in }\Omega,
\end{array}
\right.  \label{approx_viscoplastic}%
\end{equation}
where%
\begin{equation}
\mathbb{P}_{\varepsilon}=\mathbb{P}_{\varepsilon}\left(  \mathbb{D}u\right)
=\tau^{\ast}g_{\varepsilon}\left(  \left\vert \mathbb{D}u\right\vert \right)
\mathbb{D}u\label{definition_P_eps}%
\end{equation}
with $g_{\varepsilon}\in C^{1}[0,+\infty)$ non increasing such that%
\begin{equation}
g_{\varepsilon}\left(  s\right)  =\left\{
\begin{array}
[c]{l}%
\frac{1}{\varepsilon}\text{ for }s\in\lbrack0,\frac{\varepsilon}{2}],\\
\frac{1}{s}\text{ for }\frac{3\varepsilon}{2}\leq s,
\end{array}
\right.  \label{definitie_g_eps}%
\end{equation}
along with%
\[
g_{\varepsilon}\left(  s\right)  \leq\min\left\{  \frac{1}{\varepsilon}%
,\frac{1}{s}\right\}  \text{ and }-\frac{4}{9\varepsilon^{2}}\leq
g_{\varepsilon}^{\prime}\left(  s\right)  \leq0\text{ }\forall s\in
\lbrack0,\infty).
\]
These properties ensure that $\mathbb{P}_{\varepsilon}:\mathbb{R}^{d\times
d}\rightarrow\mathbb{R}^{d\times d}$ is continuous for all $A\in
\mathbb{R}^{d\times d}$ and that
\begin{equation}
\left\{
\begin{array}
[c]{l}%
\mathbb{P}_{\varepsilon}\left(  A\right)  :A\geq0,\\
\left\vert \mathbb{P}_{\varepsilon}\left(  A\right)  \right\vert \leq
\tau^{\ast},\\
\left(  \mathbb{P}_{\varepsilon}\left(  A\right)  -\mathbb{P}_{\varepsilon
}\left(  B\right)  \right)  \left(  A-B\right)  \geq0,
\end{array}
\right.  \label{P_eps}%
\end{equation}
see \cite{malek2005herschel} page $1849$.

Consider $d\in\left\{  2,3\right\}  ,$ $r,\gamma>1$ such that one of $\left(
\text{\ref{restriction_r_gamma_1}}\right)  $ or $\left(
\text{\ref{restriction_r_gamma_2}}\right)  $ hold true. Fix $M>0$ and
$f,g\in(L^{\infty}\left(  \Omega\right)  )^{d}$. The construction of
weak-solutions for the system $\left(  \text{\ref{approx_viscoplastic}%
}\right)  $ is obtained with exactly the same arguments as above with a minor
modification which requires that $\gamma\in(1,2]$. Moreover, if we analyze the
proof of Theorem \ref{th_st}, see Section \ref{Uniform_Estimates} it
transpires that
\[
\left\Vert \varrho_{\varepsilon}\right\Vert _{L^{\frac{r\gamma}{r-1}}%
}+\left\Vert \nabla u_{\varepsilon}\right\Vert _{L^{r}}+\left\Vert
\mathbb{P}_{\varepsilon}\left(  \mathbb{D}u_{\varepsilon}\right)  \right\Vert
_{L^{\infty}}\leq C\left(  M,\Omega,\left\Vert m,\check{\varrho}%
,f,g\right\Vert _{L^{\infty}}\right)  .
\]
Thus, up to a subsequence, we obtain the existence of have that%
\begin{align*}
\varrho_{\varepsilon} &  \rightharpoonup\varrho\text{ weakly in }%
L^{\frac{r\gamma}{r-1}}(\Omega),\\
u_{\varepsilon} &  \rightharpoonup u\text{ weakly in }(W_{0}^{1,r}%
(\Omega))^{d},\\
\mathbb{P}_{\varepsilon}\left(  \mathbb{D}u_{\varepsilon}\right)   &
\rightharpoonup\mathbb{P}\text{ weakly-$\ast$ in }\left(  L^{\infty}(\Omega)\right)
^{d\times d}\\
\left\vert \mathbb{D}u_{\varepsilon}\right\vert ^{r-2}\mathbb{D}%
u_{\varepsilon} &  \rightharpoonup\mathbb{S}_{1}\text{ weakly in }(L^{\frac
{r}{r-1}}(\Omega))^{d\times d}.
\end{align*}
We denote
\[
\mathbb{S}=\mathbb{P}+\mathbb{S}_{1}\in(L^{\frac{r}{r-1}}(\Omega))^{d\times
d}.
\]
The importance of $\alpha>0$ lays in the fact that the equivalent of
Proposition \ref{propo_limit} in the present case is the following

\begin{proposition}
\label{Prop_identity_viscoplastic} The identity
\begin{equation}
\alpha\int_\Omega\overline{\varrho^{\gamma}}\varphi\;\dd x-\frac{1}{\gamma-1}\int_{\Omega}%
\overline{\varrho^{\gamma}}u\nabla\varphi \;\mathrm{d}x+\int_{\Omega}%
\overline{\varrho^{\gamma}}\mathrm{div\,}u\varphi \;\mathrm{d}x+\int_{\Omega
}\varphi\mathrm{d}\left[  \overline{\mathbb{S}:\mathbb{D}u}-\overline
{\mathbb{S}}:\mathbb{D}u\right]  =\alpha\int_\Omega\overline{\varrho^{\gamma-1}}%
\check{\varrho}\varphi\;\dd x
\label{identity_visco_plastic}%
\end{equation}
holds for all $\varphi\in C^{1}\left(  \overline{\Omega}\right)  $.
\end{proposition}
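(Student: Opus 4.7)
The proof follows the blueprint of Proposition \ref{Prop_identity}, adapted to account for the damping terms $\alpha\varrho$ and $\beta\varrho u$ and the non-vanishing source $\alpha \check\varrho$ in the continuity equation. The plan is to derive a local energy identity at the $\varepsilon$-level, pass to the limit $\varepsilon\to 0$, derive a second identity by testing the limit momentum equation with $\varphi u$, and subtract the two.

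To obtain the $\varepsilon$-level local energy identity, I would test the approximate momentum equation with $\varphi u_\varepsilon$. Integration by parts in the advection term, combined with the continuity equation $\alpha\varrho_\varepsilon + \ddiv(\varrho_\varepsilon u_\varepsilon) = \alpha \check\varrho$, yields both the familiar convective contribution $-\tfrac 12 \int \varrho_\varepsilon |u_\varepsilon|^2 u_\varepsilon \cdot \nabla\varphi\,\dd x$ and a new kinetic term $\tfrac\alpha2 \int (\check\varrho - \varrho_\varepsilon)|u_\varepsilon|^2 \varphi\,\dd x$ generated by the nontrivial right-hand side. The viscous contribution reads $\int \mathbb{S}_\varepsilon u_\varepsilon\cdot\nabla\varphi \,\dd x + \int \varphi\, \mathbb{S}_\varepsilon : \mathbb{D} u_\varepsilon \,\dd x$, while the pressure term gives $-\int \varrho_\varepsilon^\gamma u_\varepsilon\cdot\nabla\varphi\,\dd x - \int \varrho_\varepsilon^\gamma \varphi \ddiv u_\varepsilon\,\dd x$; the problematic product $\varrho_\varepsilon^\gamma \ddiv u_\varepsilon$ is eliminated by invoking the renormalized continuity equation at the $\varepsilon$-level with $b(\varrho) = \varrho^\gamma$, which here reads
\[
\alpha\gamma\varrho_\varepsilon^\gamma + \ddiv(\varrho_\varepsilon^\gamma u_\varepsilon) + (\gamma-1)\varrho_\varepsilon^\gamma \ddiv u_\varepsilon = \alpha\gamma \check\varrho\, \varrho_\varepsilon^{\gamma-1},
\]
producing remainders proportional to $\alpha \varrho_\varepsilon^\gamma$ and $\alpha \check\varrho\,\varrho_\varepsilon^{\gamma-1}$.

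Next, I would pass to the limit $\varepsilon\to 0$ using the strong convergence of $u_\varepsilon$ in $L^r(\Omega)$, the weak convergences $\varrho_\varepsilon \rightharpoonup \varrho$, $\varrho_\varepsilon^\gamma \rightharpoonup \overline{\varrho^\gamma}$, $\varrho_\varepsilon^{\gamma-1}\rightharpoonup \overline{\varrho^{\gamma-1}}$, $\mathbb{S}_\varepsilon \rightharpoonup \overline{\mathbb{S}}$, and the weak-$\ast$ convergence $\mathbb{S}_\varepsilon : \mathbb{D} u_\varepsilon \rightharpoonup \overline{\mathbb{S} : \mathbb{D} u}$ in $\mathcal{M}(\Omega)$. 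Separately, I would test the limit momentum equation $\beta\varrho u + \ddiv(\varrho u\otimes u) - \ddiv \overline{\mathbb{S}} + \nabla \overline{\varrho^\gamma} = \varrho f + g$ with $\varphi u$, which produces an analogous identity with $\int \varphi\, \overline{\mathbb{S}}:\mathbb{D} u\,\dd x$ in place of the measure term $\int \varphi\,\dd \overline{\mathbb{S}:\mathbb{D} u}$ and with $-\int \overline{\varrho^\gamma}\varphi\ddiv u\,\dd x$ replacing the pressure contribution obtained via renormalization. Subtracting the limit identity from the passed-to-the-limit one, the $\beta$-damping, the kinetic $\alpha|u|^2$ contributions, the convective term, the $\overline{\mathbb{S}} u \cdot \nabla \varphi$ contribution, and the external force terms all cancel, leaving precisely identity \eqref{identity_visco_plastic}.

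The main technical subtlety is the appearance of the weak limit $\overline{\varrho^{\gamma-1}}$ on the right-hand side --- a feature absent from Proposition \ref{Prop_identity} and directly caused by the source $\alpha\check\varrho$. To justify the passage to the limit in $\int \check\varrho\, \varrho_\varepsilon^{\gamma-1}\varphi\,\dd x$, one uses the uniform estimate $\varrho_\varepsilon \in L^{r\gamma/(r-1)}$, which together with the restriction $\gamma \in (1,2]$ ensures that $\varrho_\varepsilon^{\gamma-1}$ is uniformly bounded in $L^p(\Omega)$ for some $p > 1$; its weak limit then pairs with the $L^\infty$-bounded function $\check\varrho \varphi$. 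The rigorous justification of the renormalized continuity equation, both at the $\varepsilon$-level and at the limit level, is a direct adaptation of Proposition \ref{Prop_ren2}, the only modification being that the source term $\alpha(\check\varrho - \varrho_\varepsilon)$ (respectively $\alpha(\check\varrho - \varrho)$) replaces the vanishing right-hand side of the original steady continuity equation.
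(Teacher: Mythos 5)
Your proposal is correct and coincides with the paper's (implicit) proof: the paper states Proposition \ref{Prop_identity_viscoplastic} without a separate argument, intending precisely the mutatis mutandis adaptation of Proposition \ref{Prop_identity} that you carry out (local energy identity at the approximate level via testing with $\varphi u_\varepsilon$ and the renormalized continuity equation with $b(\varrho)=\varrho^{\gamma}$ and source $\alpha\check\varrho$, limit passage, subtraction of the identity obtained by testing the limit momentum equation with $\varphi u$, with the $\beta$- and kinetic $\alpha$-terms cancelling). One harmless remark: your renormalization, divided by $\gamma-1$, yields the zeroth-order and source terms with coefficient $\tfrac{\alpha\gamma}{\gamma-1}$ rather than the $\alpha$ written in \eqref{identity_visco_plastic}; this normalization discrepancy is in the paper's statement, affects both sides identically, and does not alter the sign argument that follows, so it is not a gap in your reasoning.
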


Substracting the renormalized continuity equation from $\left(
\text{\ref{identity_visco_plastic}}\right)  $ we also obtain that%
\begin{align*}
\alpha\int_\Omega\left(  \overline{\varrho^{\gamma}}-\varrho^{\gamma}\right)\varphi\;\dd x
-& \frac{1}{\gamma-1}\int_{\Omega}\left(  \overline{\varrho^{\gamma}}%
- \varrho^{\gamma}\right)  u\nabla\varphi \;\mathrm{d}x+ \int_{\Omega}\left(
\overline{\varrho^{\gamma}}-\varrho^{\gamma}\right)  \mathrm{div\,}%
u\varphi \;\mathrm{d}x \\
&+\int_{\Omega}\varphi\mathrm{d}\left[  \overline
{\mathbb{S}:\mathbb{D}u}-\overline{\mathbb{S}}:\mathbb{D}u\right] =\alpha\int_\Omega\left(  \overline{\varrho^{\gamma-1}}-\varrho^{\gamma-1}\right)
\check{\varrho}\varphi\;\dd x \text{.}%
\end{align*}
The restriction $\gamma\in(1,2]$ ensures that the RHS is negative. From
Proposition \ref{propo_limit}, the monotonicity of the dissipative operator
and Korn's inequality we obtain that
\begin{align*}
\varrho_{\varepsilon}  & \rightarrow\varrho\text{ in }L^{\gamma}\left(
\Omega\right)  ,\\
\nabla u_{\varepsilon}  & \rightarrow\nabla u\text{ strongly in }\left(
L^{q}\left(  \Omega\right)  \right)  ^{d},
\end{align*}
for any $q\in\lbrack1,r)$. Thus, up to a subsequence, we also have that for
all $i,j\in\overline{1,d}$%
\[
\partial_{i}u_{\varepsilon}^{j}\rightarrow\partial_{i}u^{j}\text{ a.e. on
}\Omega.
\]
In particular, this allows us to identify $\mathbb{S}_{1}=\left\vert
\mathbb{D}u\right\vert ^{r-2}\mathbb{D}u.$ From this and the definition
$\left(  \text{\ref{P_eps}}\right)  $ we see that%
\[
\lim_{\varepsilon\rightarrow0}\left\{  \mathbb{P}_{\varepsilon}+\left\vert
\mathbb{D}u_{\varepsilon}\right\vert ^{r-2}\mathbb{D}u_{\varepsilon}\right\}
=\tau^{\ast}\frac{\mathbb{D}u}{\left\vert \mathbb{D}u\right\vert }+\left\vert
\mathbb{D}u\right\vert ^{r-2}\mathbb{D}u\text{ on }\left\{  \left\vert
\mathbb{D}u\right\vert >0\right\}  .
\]
Next, since for all $\varphi\in L^{1}\left(  \Omega\right)$ with $\varphi\geq0$
\begin{align*}
0  & \leq\int_{\Omega}\left\vert \mathbb{P}_{\varepsilon}-\mathbb{P}%
\right\vert ^{2}\varphi\text{ }\mathrm{d}x=\int_{\Omega}\left(  \left\vert
\mathbb{P}\right\vert ^{2}-2\left\langle \mathbb{P}_{\varepsilon}%
,\mathbb{P}\right\rangle +\left\vert \mathbb{P}_{\varepsilon}\right\vert
^{2}\right)  \varphi\text{ }\mathrm{d}x\\
& \leq(\tau^{\ast})^{2}\int_{\Omega}\varphi\text{ }\mathrm{d}x+\int_{\Omega
}\left(  \left\vert \mathbb{P}\right\vert ^{2}-2\left\langle \mathbb{P}%
_{\varepsilon},\mathbb{P}\right\rangle \right)  \varphi\text{ }\mathrm{d}x
\end{align*}
passing to the limit we deduce that%
\[
\left\vert \mathbb{P}\right\vert \leq\tau^{\ast}%
\]
a.e. on $\Omega$ and thus%
\[
\left\vert \mathbb{S}\right\vert \leq\left\vert \mathbb{P}\right\vert
+\left\vert \mathbb{S}_{1}\right\vert \leq\tau^{\ast}+\left\vert
\mathbb{D}u\right\vert ^{r-1}=\tau^{\ast}\text{ on }\left\{  \left\vert
\mathbb{D}u\right\vert =0\right\}  .
\]

\paragraph{Acknowledgements:} The work of M. S. was supported by the National Science Centre grant no. 2022/45/N/ST1/03900 (Preludium). The work of C.B. has been partially supported by the project CRISIS (ANR-20-CE40-0020-01), operated by the French National Research Agency (ANR)

\begin{appendices}
\section{Construction of approximate solutions \label{CAS}}

In this section we prove Theorem \ref{Existence_4_levels} which concerns the
existence of solutions for the approximate system%
\begin{equation}
\left\{
\begin{array}
[c]{l}%
-\varepsilon\Delta\varrho+\eta\left(  \varrho-\dfrac{M}{\left\vert
\Omega\right\vert }\right)  +\operatorname{div}\left(  \varrho\omega_{\delta
}\ast T_{\delta}\left(  u\right)  \right)  =0,\\
-\operatorname{div}\mathbb{S}(\D u)-\alpha\operatorname{div}\left(  \left\vert
\nabla u\right\vert ^{q-2}\nabla u\right)  =F\left(  \varrho,u\right)  ,\\
\varrho\geq0,\text{ }%
%TCIMACRO{\dint _{\Omega}}%
%BeginExpansion
{\displaystyle\int_{\Omega}}
%EndExpansion
\varrho\mathrm{d}x=M,\\
\frac{\partial\varrho}{\partial\overrightarrow{n}}=0\text{ },u=0\text{ on
}\partial\Omega
\end{array}
\right.  \label{Approximare_eps_delta_eta_alpha}%
\end{equation}
where%

\begin{align*}\label{definitie_F}
F\left(  \varrho,u\right)   &  =-\operatorname{div}\left(  \varrho\left(
\omega_{\delta}\ast T_{\delta}(u)\right)  \otimes u\right)  -T_{\delta}%
(\omega_{\delta}\ast\widetilde{\nabla\varrho^{\gamma}})\\
&  -\frac{\eta}{2}\varrho u-\varepsilon\nabla u\nabla\varrho+\varrho f+g.
\end{align*}

Recall that above we have
\[
\omega_{\delta}\left(  \cdot\right)  =\frac{1}{\delta^{3}}\omega\left(
\frac{1}{\delta}\cdot\right)  ,\text{ }T_{\delta}(v)\left(  x\right)
=\boldsymbol{1}_{\Omega_{2\delta}}\left(  x\right)  v\left(  x\right)  ,
\]
where $\omega\in\mathcal{D}\left(  \mathbb{R}^{3}\right)  $ a smooth,
non-negative, radial function which is compactly supported in the unit ball
centered at the origin and with integral $1$ while%
\[
\Omega_{2\delta}=\left\{  x\in\Omega:d\left(  x,\partial\Omega\right)
\geq2\delta\right\}  .
\]
By the $\widetilde{\cdot}$ we denote the trivial extension on $\mathbb{R}^{d}$
by $0$. Of course, we have that for any $u\in\left(  W^{1,q}\left(
\Omega\right)  \right)  ^{d},$ $\omega_{\delta}\ast T_{\delta}\left(
u\right)  \in W_{0}^{1,\infty}\left(  \Omega\right)  $. For the reader'
convenience, we recall below the precise statement of Theorem
\ref{Existence_4_levels_main}

\begin{theorem}
\label{Existence_4_levels}
Consider $\alpha,\delta,\varepsilon,\eta
\in\left(  0,1\right)  $, $d\in\left\{  2,3\right\}  ,$ $\gamma>1$,
$q>d$ and $r>\frac{d}{2}$. Let $\mathbb{S}:\mathbb{R}%
_{sym}^{d\times d}\rightarrow\mathbb{R}_{sym}^{d\times d}$ be a continuous
function verifying the conditions (\ref{growth})-(\ref{monotonicity}). For all $M>0$ and $f,g\in(L^{\infty}\left(  \Omega\right)  )^{d}%
$, there exists a solution $\left(  \varrho,u\right)  \in W^{2,2}\left(
\Omega\right)  \times(W_{0}^{1,q}\left(  \Omega\right)  )^{d}$ of $\left(  \text{\ref{Approximare_eps_delta_eta_alpha}%
}\right)  $ verifying the
identity
\begin{align}
\int_{\Omega}\varrho\text{ }\mathrm{d}x &  =M,\label{mass_conservation_appendix}\\
\int_{\Omega}\mathbb{S}(\D u) &  :\D u\text{ }\mathrm{d}x+\frac{4\varepsilon
}{\gamma}%
%TCIMACRO{\dint _{\Omega}}%
%BeginExpansion
{\displaystyle\int_{\Omega}}
%EndExpansion
\left\vert \nabla\varrho^{\frac{\gamma}{2}}\right\vert ^{2}\mathrm{d}%
x+\alpha\int_{\Omega}\left\vert \nabla u\right\vert ^{q}\mathrm{d}%
x+\frac{\gamma\eta}{2\left(  \gamma-1\right)  }%
%TCIMACRO{\dint _{\Omega}}%
%BeginExpansion
{\displaystyle\int_{\Omega}}
%EndExpansion
\varrho^{\gamma}\mathrm{d}x+\frac{\eta M}{\left\vert \Omega\right\vert }%
\int_{\Omega}\frac{\left\vert u\right\vert ^{2}}{2}\mathrm{d}x\nonumber\\
&  =\frac{\gamma\eta}{\left(  \gamma-1\right)  }\frac{M}{\left\vert
\Omega\right\vert }%
%TCIMACRO{\dint _{\Omega}}%
%BeginExpansion
{\displaystyle\int_{\Omega}}
%EndExpansion
\varrho^{\gamma-1}\mathrm{d}x+\int_{\Omega}\left(  \varrho f+g\right)  u\text{
}\mathrm{d}x,\label{identity_1_appendix}%
\end{align}
along with the following estimates:%
\begin{equation}
\left\{
\begin{array}
[c]{l}%
\int_{\Omega}\mathbb{S(}u):\nabla u\text{ }\mathrm{d}x+\frac{4\varepsilon
}{\gamma}%
%TCIMACRO{\dint _{\Omega}}%
%BeginExpansion
{\displaystyle\int_{\Omega}}
%EndExpansion
\left\vert \nabla\varrho^{\frac{\gamma}{2}}\right\vert ^{2}\mathrm{d}x\\
+\alpha\int_{\Omega}\left\vert \nabla u\right\vert ^{q}\mathrm{d}%
x+\frac{\gamma\eta}{2\left(  \gamma-1\right)  }%
%TCIMACRO{\dint _{\Omega}}%
%BeginExpansion
{\displaystyle\int_{\Omega}}
%EndExpansion
\varrho^{\gamma}\mathrm{d}x+\frac{\eta M}{\left\vert \Omega\right\vert }%
\int_{\Omega}\frac{\left\vert u\right\vert ^{2}}{2}\mathrm{d}x\leq C\left(
M,\Omega,\varepsilon,\left\Vert f,g\right\Vert _{L^{\infty}}\right)  ,\\
\varepsilon\left\Vert \nabla^{2}\varrho\right\Vert _{L^{r}}\leq C\left(
M,\Omega,\varepsilon,\left\Vert f,g\right\Vert _{L^{\infty}}\right)  .
\end{array}
\right.  \label{estimari_eps_delta_prop}%
\end{equation}

\end{theorem}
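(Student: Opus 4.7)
The plan is to construct a solution via the Leray--Schauder fixed point theorem, exploiting the artificial $\alpha|\nabla u|^{q-2}\nabla u$ term together with $q>d$ to gain both compactness and an $L^\infty$-bound on $u$. Given $v\in (W_0^{1,q}(\Omega))^d$, set $w=\omega_\delta*T_\delta(v)\in (W_0^{1,\infty}(\Omega))^d$ and solve the linear elliptic problem
\begin{equation*}
-\varepsilon\Delta\varrho + \eta\varrho + \ddiv(\varrho w) = \eta M/|\Omega|,\qquad \partial\varrho/\partial\overrightarrow{n}=0,
\end{equation*}
for $\varrho\in H^1(\Omega)$ by Lax--Milgram; coercivity comes from $\varepsilon,\eta>0$ and the $L^\infty$-bound on $w$. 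Nonnegativity $\varrho\geq 0$ follows from testing with $\varrho_-=\min(\varrho,0)$, mass conservation $\int_\Omega\varrho\,\dd x=M$ from integrating the equation, and $L^\infty$- then $W^{2,r}$-regularity (with $r>d/2$) from De Giorgi--Moser plus Neumann elliptic bootstrap. Given $\varrho$, solve the monotone problem
\begin{equation*}
-\ddiv\mathbb{S}(\mathbb{D}u) - \alpha\ddiv(|\nabla u|^{q-2}\nabla u) = F(\varrho,v)
\end{equation*}
in $(W_0^{1,q}(\Omega))^d$ via the Browder--Minty theorem: coercivity comes from the $\alpha$-term, monotonicity from (\ref{monotonicity}), and $F(\varrho,v)\in (W^{-1,q'})^d$ because all ingredients (including $\omega_\delta * T_\delta(u)$ and the smoothed pressure term) are uniformly in $L^\infty$. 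This defines a map $\mathcal{T}\colon v\mapsto u$; standard weak--strong monotonicity arguments combined with Rellich--Kondrachov yield continuity and compactness of $\mathcal{T}$.

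The a priori bound on any fixed point $u_t$ of the homotopy $v\mapsto \mathcal{T}_t(v)$ (obtained by scaling $F$ by $t\in[0,1]$) comes from deriving the energy identity (\ref{identity_1_appendix}). I test the momentum equation with $u$ and the continuity equation with two different test functions: first $|u|^2/2$, which cancels exactly the convective contribution $\ddiv(\varrho w\otimes u)\cdot u$; and second $\gamma\varrho^{\gamma-1}/(\gamma-1)$, which converts the pressure term $T_\delta(\omega_\delta * \widetilde{\nabla\varrho^\gamma})\cdot u = \nabla\varrho^\gamma\cdot w$ (using the self-adjointness of the radial mollifier and of $T_\delta$) into the left-hand side contributions $\frac{\gamma\eta}{2(\gamma-1)}\int\varrho^\gamma$ and $\frac{4\varepsilon}{\gamma}\int|\nabla\varrho^{\gamma/2}|^2$. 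The artificial forcings $\frac{\eta}{2}\varrho u$ and $-\varepsilon\nabla u\nabla\varrho$ present in $F$ are engineered precisely so that they combine with the convection--continuity cancellation to produce the clean right-hand side of (\ref{identity_1_appendix}); Young's inequality then absorbs $\varrho f\cdot u$ and $g\cdot u$ to give (\ref{estimari_eps_delta_prop}), after noting that $\int\varrho^{\gamma-1}\,\dd x$ is controlled by $\int\varrho^\gamma\,\dd x + |\Omega|$.

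The main technical obstacle is the derivation of the energy identity (\ref{identity_1_appendix}): one must exploit the symmetry of the operator $v\mapsto \omega_\delta*T_\delta(v)$ to move derivatives between $u$ and $\nabla\varrho^\gamma$, which is exactly the reason the same mollification--truncation scheme appears in both equations of the approximate system. Once (\ref{identity_1_appendix}) is established, the residual $W^{2,r}$-bound in (\ref{estimari_eps_delta_prop}) follows from Neumann elliptic regularity applied to $-\varepsilon\Delta\varrho = \eta(M/|\Omega|-\varrho)-\ddiv(\varrho w)$, using that $\varrho\in L^\infty$ and $w\in W^{1,\infty}$ bound the right-hand side in $L^r$. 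Finally, Leray--Schauder applied to the compact continuous map $\mathcal{T}$ with these uniform a priori bounds provides the desired fixed point $u=\mathcal{T}(u)$, completing the proof.
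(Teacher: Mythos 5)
Your proposal follows essentially the same route as the paper's Appendix A: split the system into a Neumann problem for the density and a monotone ($\mathbb{S}+\alpha|\nabla\cdot|^{q-2}\nabla\cdot$) problem for the velocity solved by Browder's theorem, establish continuity and compactness of the composite map, derive the a priori bound from the energy identity obtained by testing the momentum equation with $u$ and the continuity equation with $|u|^{2}/2$ and $\gamma\varrho^{\gamma-1}/(\gamma-1)$ (exploiting the self-adjointness of $\omega_{\delta}\ast T_{\delta}$ for the pressure term), and conclude by Leray--Schauder. The only cosmetic difference is that you bound fixed points of the homotopy obtained by scaling $F$ by $t$, whereas the paper bounds solutions of $u=\lambda\mathcal{E}(u)$ as required by the Gilbarg--Trudinger form of the theorem it quotes; both variants close via the same energy computation.
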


The fact that we can solve the above system is a consequence of the
Leray-Schauder fixed point theorem, see Theorem \ref{Schauder_Leray} from the
Appendix B. Let us point out that the approximate system with the mollified
truncated velocity $\omega_{\delta}\ast T_{\delta}\left(  u\right)  $ that we
consider here is somehow non-standard and that a similar system was proposed
in \cite{BreschBurtea2021}. The rest of this section is devoted to the proof
of Theorem \ref{Existence_4_levels}.

\underline{Proof of Theorem \ref{Existence_4_levels}:}

Fix $\alpha,\delta,\varepsilon,\eta\in\left(  0,1\right)  ,$ $d\in\left\{
2,3\right\}  ,$ $\gamma>1$ and $q>d$. We begin by recalling the
following proposition which is taken from A. Novotn\'{y}-I.Stra\v{s}kraba
\cite{NovotnyStraskraba2004}, Proposition $4.29$ page $213$ :

\begin{proposition}
\label{transport_stationar}Let $\Omega$ be a domain of class $C^{2}$. Consider
$v\in\left(  W_{0}^{1,\infty}\left(  \Omega\right)  \right)  ^{d}$ and
$M,\delta,\varepsilon>0$. Then there exists an unique positive solution
$\varrho\in W^{2,2}\left(  \Omega\right)  $ for the equation%
\[
\left\{
\begin{array}
[c]{rr}%
-\varepsilon\Delta\varrho+\eta\left(  \varrho-\frac{M}{\left\vert
\Omega\right\vert }\right)  +\operatorname{div}\left(  \varrho v\right)  =0, &
\text{in }\Omega,\\
\frac{\partial\varrho}{\partial\overrightarrow{n}}\left(  x\right)  =0, &
\text{on }\partial\Omega.
\end{array}
\right.
\]
Moreover, there exists a positive constant $C\left(  M,\varepsilon\right)  $
depending on $\varepsilon$ and $M$ such that:%
\[
\left\{
\begin{array}
[c]{l}%
\varrho\geq0,\text{ }%
%TCIMACRO{\dint _{\Omega}}%
%BeginExpansion
{\displaystyle\int_{\Omega}}
%EndExpansion
\varrho\mathrm{d}x=M,\\
\varepsilon%
%TCIMACRO{\dint _{\Omega}}%
%BeginExpansion
{\displaystyle\int_{\Omega}}
%EndExpansion
\left\vert \nabla\varrho\right\vert ^{2}\mathrm{d}x+\eta%
%TCIMACRO{\dint _{\Omega}}%
%BeginExpansion
{\displaystyle\int_{\Omega}}
%EndExpansion
\varrho^{2}\mathrm{d}x\leq C\left(  M,\varepsilon,\eta\right)  \left(
1+\left\Vert v\right\Vert _{W^{1,\infty}(\mathbb{T}^{3})}^{2}\right)  ,\\
\left\Vert \varrho\right\Vert _{W^{2,2}(\Omega)}\leq C\left(  M,\varepsilon
,\eta\right)  \left(  1+\left\Vert v\right\Vert _{W^{1,\infty}(\Omega)}%
^{2}\right)  .
\end{array}
\right.
\]

\end{proposition}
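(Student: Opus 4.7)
The plan is to treat this as a standard linear, second-order elliptic boundary-value problem and to use classical tools: Fredholm alternative for existence, testing with $\varrho$ (and $\varrho_-$) for the \emph{a priori} bounds and positivity, and elliptic regularity for the $W^{2,2}$-estimate. Rewriting the equation in non-divergence form
\[
-\varepsilon\Delta\varrho+(\eta+\operatorname{div}v)\varrho+v\cdot\nabla\varrho=\eta\frac{M}{|\Omega|},
\]
I note that all coefficients are bounded since $v\in W_0^{1,\infty}(\Omega)^d$, and the problem is uniformly elliptic with Neumann boundary condition on a $C^2$ domain. The associated bilinear form on $H^1(\Omega)$ is
\[
a(\varrho,\phi)=\varepsilon\int_\Omega\nabla\varrho\cdot\nabla\phi\,\mathrm{d}x+\eta\int_\Omega\varrho\phi\,\mathrm{d}x-\int_\Omega\varrho\,v\cdot\nabla\phi\,\mathrm{d}x,
\]
where the transport term was integrated by parts, using $v=0$ on $\partial\Omega$. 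Continuity of $a$ is clear, and coercivity follows after splitting the transport term via $|\int\varrho\,v\cdot\nabla\varrho|\le\tfrac{\varepsilon}{2}\|\nabla\varrho\|_{L^2}^2+\tfrac{1}{2\varepsilon}\|v\|_{L^\infty}^2\|\varrho\|_{L^2}^2$ and, if necessary, adding/subtracting a large constant (handled by a Fredholm-alternative argument, with the compact term absorbed via Rellich).

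Existence in $H^1$ then reduces to uniqueness of the homogeneous problem. To see this, I would test the homogeneous equation with $\varrho$ itself and use the identity
\[
\int_\Omega\operatorname{div}(\varrho v)\,\varrho\,\mathrm{d}x=\tfrac{1}{2}\int_\Omega\operatorname{div}(v)\,\varrho^2\,\mathrm{d}x,
\]
combined with Young's inequality to absorb the indefinite term into $\varepsilon\|\nabla\varrho\|_{L^2}^2+\eta\|\varrho\|_{L^2}^2$ up to constants depending on $\varepsilon,\eta,\|v\|_{W^{1,\infty}}$. The mass constraint $\int_\Omega\varrho\,\mathrm{d}x=M$ is then automatic: integrating the full equation over $\Omega$, the Laplacian term vanishes via the Neumann condition and the divergence term vanishes because $v|_{\partial\Omega}=0$, leaving $\eta\int_\Omega\varrho\,\mathrm{d}x=\eta M$. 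For the non-negativity I would test with the admissible function $\varrho_-:=\min(\varrho,0)\in H^1(\Omega)$; the same computation yields
\[
\varepsilon\|\nabla\varrho_-\|_{L^2}^2+\eta\|\varrho_-\|_{L^2}^2+\tfrac{1}{2}\int_\Omega\operatorname{div}(v)\,\varrho_-^2\,\mathrm{d}x\le\eta\frac{M}{|\Omega|}\int_\Omega\varrho_-\,\mathrm{d}x\le 0,
\]
and, after the usual absorption, forces $\varrho_-\equiv 0$.

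The energy bound is then obtained by testing the full inhomogeneous equation with $\varrho$ and using the same inequalities together with $\int_\Omega\varrho\,\mathrm{d}x=M$; the dependence on $v$ enters only through $\|v\|_{W^{1,\infty}}^2$ (from the Young inequality), which matches the claimed $1+\|v\|_{W^{1,\infty}}^2$ structure. To upgrade to $W^{2,2}$, I would recast the equation as a Poisson problem with Neumann data,
\[
-\varepsilon\Delta\varrho=\eta\frac{M}{|\Omega|}-\eta\varrho-v\cdot\nabla\varrho-\varrho\operatorname{div}v,\qquad\frac{\partial\varrho}{\partial\vec{n}}=0\text{ on }\partial\Omega,
\]
whose right-hand side lies in $L^2(\Omega)$ thanks to the $H^1$-bound and $v\in W^{1,\infty}$. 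Standard elliptic regularity on the $C^2$ domain $\Omega$ then gives $\varrho\in W^{2,2}(\Omega)$ with the stated estimate, the $W^{1,\infty}$-norm of $v$ again appearing quadratically.

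The main obstacle I anticipate is the lack of a sign on $\eta+\operatorname{div}v$, which prevents a naive application of Lax--Milgram or of the elementary maximum principle. This is overcome by keeping $\varepsilon,\eta>0$ fixed throughout the proof and exploiting $C(M,\varepsilon,\eta)$ dependence in the final estimate: the Fredholm alternative requires only that the homogeneous problem has no nontrivial solution, which is precisely what the test-with-$\varrho$ computation delivers after absorbing the indefinite term. Uniqueness of the full problem (which is then a byproduct) follows by linearity applied to the difference of two solutions sharing the same mass.
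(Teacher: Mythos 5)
The paper does not actually prove this proposition; it is quoted verbatim from Novotn\'y--Stra\v{s}kraba (Proposition 4.29, p.~213), so your attempt has to stand on its own. Its skeleton (Fredholm alternative, integrate the equation for the mass, elliptic Neumann regularity for $W^{2,2}$) is reasonable, but the three places where you actually have to do work --- injectivity of the homogeneous problem, nonnegativity, and the energy bound --- all rest on the same step that fails: absorbing the indefinite term $\tfrac12\int_\Omega \operatorname{div}(v)\,\varrho^2\,\mathrm{d}x$ (or $\tfrac12\int_\Omega \operatorname{div}(v)\,\varrho_-^2\,\mathrm{d}x$) into $\varepsilon\|\nabla\varrho\|_{L^2}^2+\eta\|\varrho\|_{L^2}^2$. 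No smallness of $v$ relative to $\varepsilon,\eta$ is assumed (in the application $\eta\in(0,1)$ is a vanishing parameter while $v$ is an arbitrary element of $W_0^{1,\infty}$), and there is no Poincar\'e inequality available for $\varrho$ or $\varrho_-$ in this Neumann setting, so the gradient term cannot control the $L^2$ norm. Concretely, testing the homogeneous equation with $\varrho$ only gives $\varepsilon\|\nabla\varrho\|_{L^2}^2+\eta\|\varrho\|_{L^2}^2=-\tfrac12\int_\Omega\operatorname{div}(v)\varrho^2$, which is not a contradiction once $\|\operatorname{div}v\|_{L^\infty}\ge 2\eta$; the same applies to your $\varrho_-$ computation, so neither uniqueness (hence existence via Fredholm) nor $\varrho\ge0$ is actually established. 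Likewise, in the energy estimate the term $\tfrac12\|\operatorname{div}v\|_{L^\infty}\|\varrho\|_{L^2}^2$ cannot be absorbed into $\eta\|\varrho\|_{L^2}^2$, so the claimed bound does not close as written.

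The standard repair (and the one behind the cited proof) is the renormalization device rather than plain energy testing: multiply the equation by $b'(\varrho)$ with $b$ convex and positively $1$-homogeneous (smooth approximations of $s\mapsto s^-$, resp.\ $s^+$). Convexity gives $-\varepsilon\int_\Omega b'(\varrho)\Delta\varrho\,\mathrm{d}x=\varepsilon\int_\Omega b''(\varrho)|\nabla\varrho|^2\,\mathrm{d}x\ge0$ (the boundary term vanishes by the Neumann condition), while the troublesome contribution of the transport term takes the form $\int_\Omega\bigl(\varrho b'(\varrho)-b(\varrho)\bigr)\operatorname{div}v\,\mathrm{d}x$, which vanishes in the limit precisely because $b$ is $1$-homogeneous. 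This yields $\eta\int_\Omega \varrho^-\,\mathrm{d}x\le0$, i.e.\ $\varrho\ge0$, with no smallness assumption, and applied with both $s^+$ and $s^-$ to the homogeneous problem (or to the difference of two solutions) it gives the injectivity you need for the Fredholm alternative. For the a priori bound, after nonnegativity you know $\|\varrho\|_{L^1}=M$, and the term $\tfrac12\int_\Omega\operatorname{div}(v)\varrho^2$ must be handled by interpolating $\|\varrho\|_{L^2}$ between $\|\nabla\varrho\|_{L^2}$ and $\|\varrho\|_{L^1}$ (Gagliardo--Nirenberg/Ehrling) before applying Young; this, not a bare Young inequality, is where the stated dependence on $\|v\|_{W^{1,\infty}}$ comes from. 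Your $W^{2,2}$ step is fine once the $H^1$ bound is in place.
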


The above proposition is the starting point for the construction of solutions
for $\left(  \text{\ref{Approximare_eps_delta_eta_alpha}}\right)  $. These are
obtained as fixed points of the operator that is constructed as follows.
Consider
\begin{equation}
M>0,\qquad f,g\in(L^{\infty}\left(  \Omega\right)  )^{d}, \label{M_et_g}%
\end{equation}
along with $\alpha,\delta,\varepsilon,\eta\in\left(  0,1\right)  $, and for
any $v\in\left(  W_{0}^{1,r}\left(  \Omega\right)  \right)  ^{d},$ we consider
$\mathcal{E}\left(  v\right)  \in\left(  W_{0}^{1,q}\left(  \Omega\right)
\right)  ^{d}$ verifying
\begin{equation}
-\operatorname{div}\mathbb{S}(\D \mathcal{E}\left(  v\right)  )-\alpha
\operatorname{div}\left(  \left\vert \nabla\mathcal{E}\left(  v\right)
\right\vert ^{q-2}\nabla\mathcal{E}\left(  v\right)  \right)  =F\left(
\varrho,v\right)  , \label{definitie_E}%
\end{equation}
where $\varrho\in W^{2,2}\left(  \Omega\right)  $ is the unique solution of%
\begin{equation}
\left\{
\begin{array}
[c]{rr}%
-\varepsilon\Delta\varrho+\eta\left(  \varrho-\frac{M}{\left\vert
\Omega\right\vert }\right)  +\operatorname{div}\left(  \varrho\omega_{\delta
}\ast(T_{\delta}(v))\right)  =0, & \text{in }\Omega,\\
\frac{\partial\varrho}{\partial\overrightarrow{n}}\left(  x\right)  =0, &
\text{on }\partial\Omega.
\end{array}
\right.  \label{definitie_rho_E}%
\end{equation}
The existence of $\mathcal{E}\left(  v\right)  $ is a consequence of Browder's theorem, see Theorem
$1$ from \cite{Browder1963} and the fact that

\begin{proposition}
$F\left(  \varrho,v\right)  \in L^{\frac{q}{q-1}}\left(  \Omega\right)  $.
\end{proposition}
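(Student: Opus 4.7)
The plan is to show each of the five constituent terms of $F(\varrho,v)$ lies in $L^{q/(q-1)}(\Omega)$, exploiting heavily the fact that $\omega_\delta \ast T_\delta(v)$ is a smooth compactly supported mollification and that $\varrho$ has enough Sobolev regularity. First I would invoke Proposition \ref{transport_stationar} to obtain $\varrho\in W^{2,2}(\Omega)$ with norm controlled by $\|\omega_\delta\ast T_\delta(v)\|_{W^{1,\infty}}$, and note that the Sobolev embedding (valid in $d\le 3$) gives $\varrho\in L^\infty\cap\mathcal{C}(\overline{\Omega})$ together with $\nabla\varrho\in L^p(\Omega)$ for suitably large $p$ (any $p<\infty$ in 2D, $p=6$ in 3D, improvable by bootstrapping the elliptic estimate). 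Meanwhile, since $T_\delta(v)$ is compactly supported in $\Omega_{2\delta}$ and $\omega_\delta$ is a smooth kernel, $w:=\omega_\delta\ast T_\delta(v)\in\mathcal{C}_c^\infty(\Omega_\delta)$ with $\|w\|_{C^k(\overline\Omega)}\le C(\delta,k)\|v\|_{L^1}$ for every $k$.

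With these two inputs I would treat the five terms. The source $\varrho f+g$ is immediately in $L^\infty(\Omega)\subset L^{q/(q-1)}(\Omega)$. The mollified pressure gradient $T_\delta(\omega_\delta\ast\widetilde{\nabla\varrho^{\gamma}})$ is likewise a smooth compactly supported function, bounded in $L^\infty$ by $C(\delta,\gamma)\|\gamma\varrho^{\gamma-1}\nabla\varrho\|_{L^1}$, which is finite since $\varrho^{\gamma-1}\in L^\infty$ and $\nabla\varrho\in L^2$. For the zeroth-order term $\tfrac{\eta}{2}\varrho v$, Hölder combined with $\varrho\in L^\infty$ and the Sobolev embedding $W_0^{1,r}(\Omega)\hookrightarrow L^{r^\ast}(\Omega)$ (with $r^\ast\ge d>q/(q-1)$, following from $r>d/2$) yields the claim. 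For $\varepsilon\nabla v\cdot\nabla\varrho$ I would pair $\nabla v\in L^r$ with $\nabla\varrho$ in the appropriate $L^p$ using Hölder, again checking via the embedding chain that the resulting Lebesgue exponent is $\ge q/(q-1)$ on the bounded domain $\Omega$ (bootstrapping $\nabla\varrho\in L^\infty$ if needed).

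The main work goes into the convective term $\operatorname{div}(\varrho\, w\otimes v)$. I would expand by the product rule,
\[
\bigl[\operatorname{div}(\varrho\, w\otimes v)\bigr]_i = v_j\partial_j\varrho\, w_i + \varrho\, v_j\partial_j w_i + \varrho\, w_i\,\operatorname{div} v,
\]
and estimate the three summands separately: the first pairs $\nabla\varrho\in L^p$ with $v\in L^{r^\ast}$ and the bounded $w$; the second uses $\varrho\in L^\infty$, $\nabla w\in L^\infty$, and $v\in L^{r^\ast}$; the third uses $\varrho,w\in L^\infty$ and $\operatorname{div} v\in L^r$. Each sub-term then sits in an $L^s(\Omega)$ whose exponent satisfies $s\ge q/(q-1)$ thanks to $r>d/2$ and $q>d$. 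The one point to watch carefully — and what I would regard as the main obstacle — is the sharpness of the last sub-term $\varrho w\operatorname{div} v\in L^r$: the required embedding $L^r\hookrightarrow L^{q/(q-1)}$ on the bounded domain $\Omega$ needs $r\ge q/(q-1)$, which in $d=3$ is immediate from $r>3/2$ and $q>3$, whereas in $d=2$ it may force one to take $q$ large enough relative to $r$ (equivalently $q\ge r/(r-1)$). Once this is verified for the exponents in play, one concludes $F(\varrho,v)\in L^{q/(q-1)}(\Omega)$ by summing the contributions.
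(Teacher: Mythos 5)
Your overall strategy coincides with the paper's: both proofs go term by term, using $\varrho\in W^{2,2}(\Omega)\hookrightarrow L^{\infty}(\Omega)$ in $d\le 3$ (Proposition \ref{transport_stationar}), the smoothness of the mollified quantities $\omega_{\delta}\ast T_{\delta}(v)$ and $T_{\delta}(\omega_{\delta}\ast\widetilde{\nabla\varrho^{\gamma}})$, the product rule on the convective term, and H\"older. The one substantive divergence is that you estimate $v$ only through its $W_0^{1,r}$-norm ($\nabla v\in L^{r}$, $v\in L^{r^{\ast}}$), and this is precisely what creates the obstacle you flag at the end. The operator $\mathcal{E}$ is a self-map of $(W_0^{1,q}(\Omega))^{d}$ — that is the Banach space in which the Leray--Schauder argument runs, and the paper's own estimates are all in terms of $\|v\|_{W^{1,q}}$ (the "$v\in (W_0^{1,r})^d$" in the displayed definition of $\mathcal{E}$ is inconsistent with the rest of the appendix). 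Using $\nabla v\in L^{q}$ and, since $q>d$, $v\in L^{\infty}(\overline{\Omega})$, the sub-term $\varrho\,w_i\,\partial_j v^j$ lies in $L^{q}\subset L^{q/(q-1)}$ because $q\ge 2>\tfrac{q}{q-1}$, and the $d=2$ caveat ("take $q$ large relative to $r$") disappears with no extra hypothesis. Similarly, $\varepsilon\nabla v\nabla\varrho$ needs no elliptic bootstrap to $\nabla\varrho\in L^{\infty}$: the paper pairs $\nabla v\in L^{q}$ with $\nabla\varrho\in L^{q/(q-2)}$, which is controlled by $\|\varrho\|_{W^{2,2}}$ since $q/(q-2)\le \tfrac{2d}{d-2}$ follows from $q>d\ge\tfrac{4d}{d+2}$. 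Your bootstrap is legitimate but superfluous. Be aware, though, that if you genuinely restrict yourself to $\|v\|_{W^{1,r}}$ with only $r>d/2$, the estimate does not close in $d=2$ for $r$ near $1$ and $q$ near $2$, so invoking the $W^{1,q}$ regularity of $v$ is not a cosmetic choice but the step that makes the proposition true as stated.
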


Recall that%
\begin{align*}
F\left(  \varrho,v\right)   &  =-\operatorname{div}\left(  \varrho\left(
\omega_{\delta}\ast T_{\delta}(v)\right)  \otimes v\right)  -T_{\delta}%
(\omega_{\delta}\ast\widetilde{\nabla\varrho^{\gamma}})\\
&  -\dfrac{\eta}{2}\varrho v-\varepsilon\nabla v\nabla\varrho+\varrho f+g.
\end{align*}

First, we write that%
\[
\left\Vert \varrho f+g\right\Vert _{L^{\infty}}\leq C\left(  M,\varepsilon
,\delta,\eta\right)  \left\Vert f\right\Vert _{L^{\infty}}\left\Vert
\varrho\right\Vert _{W^{2,2}}+\left\Vert g\right\Vert _{L^{\infty}}.
\]

Here we need $d\in\left\{  2,3\right\}  $ to insure that $L^{\infty}$ is imbedded
in $W^{2,2}\left(  \Omega\right)  $.

Next we write that
\[
\left\Vert \varrho v\right\Vert _{L^{\frac{q}{q-1}}}\leq C\left(
M,\varepsilon,\delta,\eta\right)  \left\Vert \varrho\right\Vert _{W^{2,2}%
}\left\Vert v\right\Vert _{L^{\frac{q}{q-1}}}\leq C\left(  \Omega
,M,\varepsilon,\delta,\eta\right)  \left\Vert \varrho\right\Vert _{W^{2,2}%
}\left\Vert v\right\Vert _{W^{1,q}}%
\]
where we use for instance $q\geq2$ for the last inequality.

Next, we observe that%
\[
\left\Vert \nabla v \nabla\varrho \right\Vert _{L^{\frac{q}{q-1}}}\leq\left\Vert
\nabla v\right\Vert _{L^{q}}\left\Vert \nabla\varrho\right\Vert _{\frac
{q}{q-2}}\leq C\left(  \Omega,M,\varepsilon,\delta,\eta\right)  \left\Vert
\varrho\right\Vert _{W^{2,2}}\left\Vert v\right\Vert _{W^{1,q}}.
\]
where we use%
\[
\frac{q}{q-2}\leq\frac{2d}{d-2}\Leftrightarrow1-\frac{2}{q}\geq\frac{1}%
{2}-\frac{1}{d}\Leftrightarrow\frac{1}{2}+\frac{1}{d}\geq\frac{2}{q},
\]
which is valid since $d\ge\frac{4d}{d+2}$

Writing%
\begin{align*}
\partial_{j}\left(  \varrho\left(  \omega_{\delta}\ast T_{\delta}(v^j)\right)
v^{i}\right)   &  =\partial_{j}\varrho\left(  \omega_{\delta}\ast T_{\delta
}(v^{j})\right)  v^{i}+\varrho\partial_{j}\left(  \omega_{\delta}\ast
T_{\delta}(v^{j})\right)  v^{i}\\
&  +\varrho\left(  \omega_{\delta}\ast T_{\delta}(v^{j})\right)  \partial
_{j}v^{i},
\end{align*}
we get that%
\[
\left\Vert \operatorname{div}\left(  \varrho\left(  \omega_{\delta}\ast
T_{\delta}(v)\right)  \otimes v\right)  \right\Vert _{L^{\frac{q}{q-1}}}\leq
C\left(  \Omega,M,\alpha,\varepsilon,\delta,\eta\right)  \left\Vert
\varrho\right\Vert _{W^{2,2}}\left\Vert v\right\Vert _{W^{1,q}}.
\]

Finally one has that%
\[
\left\Vert T_{\delta}(\omega_{\eta}\ast\widetilde{\nabla\varrho^{\gamma}%
})\right\Vert _{L^{\frac{q}{q-1}}}\leq C\left(  \Omega,M,\varepsilon
,\delta,\eta\right)  \left\Vert \varrho\right\Vert _{W^{2,2}}.
\]

It remains now to prove that we have a fixed point to solve the nonlinear
approximate system. This is the object of the following proposition.

\begin{proposition}
\label{Point_fix}The operator $\mathcal{E}$ defined by $\left(
\text{\ref{definitie_E}}\right)  $-$\left(  \text{\ref{definitie_rho_E}%
}\right)  $ admits a fixed point.
\end{proposition}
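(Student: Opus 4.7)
The plan is to apply the Leray--Schauder fixed point theorem (Theorem~\ref{Schauder_Leray}) to $\mathcal{E}:W_0^{1,q}(\Omega)^d\to W_0^{1,q}(\Omega)^d$ defined by (\ref{definitie_E})--(\ref{definitie_rho_E}). Three ingredients must be established: well-definedness of $\mathcal{E}$, its continuity and compactness, and a uniform $W_0^{1,q}$-bound on every $u$ satisfying $u=\sigma\mathcal{E}(u)$ with $\sigma\in[0,1]$.

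\emph{Well-definedness.} For $v\in W_0^{1,q}(\Omega)^d$ the smooth truncated mollification $\omega_\delta\ast T_\delta v\in W_0^{1,\infty}(\Omega)^d$ is supported inside $\Omega_\delta$, so Proposition~\ref{transport_stationar} yields a unique nonnegative $\varrho_v\in W^{2,2}(\Omega)$ satisfying (\ref{definitie_rho_E}) with $\int\varrho_v\,\dd x=M$. The $L^{q/(q-1)}$-bounds for $F(\varrho_v,v)$ established just above, combined with Browder's theorem applied to the monotone, coercive, hemicontinuous principal operator $u\mapsto -\ddiv\mS(\D u)-\alpha\ddiv(|\nabla u|^{q-2}\nabla u)$ (coercivity coming from the $q$-Laplacian part, which also dominates the $r$-growth of $\mS$ after, if needed, enlarging $q$ so that $q\geq r$), deliver a unique $\mathcal{E}(v)\in W_0^{1,q}(\Omega)^d$.

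\emph{Continuity and compactness.} Let $v_n\rightharpoonup v$ weakly in $W_0^{1,q}$. Since $q>d$, the embedding into $L^p$ is compact for every $p<\infty$, hence $v_n\to v$ strongly in every $L^p$; the smoothing of the mollifier then promotes this to $\omega_\delta\ast T_\delta v_n\to\omega_\delta\ast T_\delta v$ in every $C^k$. The $W^{2,2}$-estimate of Proposition~\ref{transport_stationar}, together with the uniqueness stated therein, forces $\varrho_{v_n}\to\varrho_v$ strongly in $W^{2,2}(\Omega)$. Every component of $F(\varrho_{v_n},v_n)$ then converges strongly in $L^{q/(q-1)}$, and Minty's trick applied to the monotone principal operator yields $\mathcal{E}(v_n)\to\mathcal{E}(v)$ strongly in $W_0^{1,q}$. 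Since only weak convergence of $(v_n)$ was assumed, this simultaneously proves continuity and compactness of $\mathcal{E}$.

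\emph{A priori bound for the homotopy.} For $\sigma=0$ the only solution is $u=0$. For $\sigma\in(0,1]$, if $u=\sigma\mathcal{E}(u)$ then $w:=u/\sigma=\mathcal{E}(u)$ solves $-\ddiv\mS(\D w)-\alpha\ddiv(|\nabla w|^{q-2}\nabla w)=F(\varrho_u,u)$, where $\varrho_u$ solves (\ref{definitie_rho_E}) with $v=u$. Testing against $w$, using $\mS(A):A\geq C_2|A|^r$, and multiplying by $\sigma$ gives, in view of $\sigma^{1-r},\sigma^{1-q}\geq 1$,
\[
C_2\|\D u\|_{L^r}^r+\alpha\|\nabla u\|_{L^q}^q\leq\int_\Omega F(\varrho_u,u)\cdot u\;\dd x.
\]
The crucial cancellations producing identity (\ref{identity_1_appendix}) are then: testing (\ref{definitie_rho_E}) against $|u|^2/2$ combines the convective term $-\ddiv(\varrho_u(\omega_\delta\ast T_\delta u)\otimes u)\cdot u$ with $-\varepsilon\nabla u\nabla\varrho_u\cdot u$ and $-\frac{\eta}{2}\varrho_u|u|^2$, leaving only the coercive contribution $\frac{\eta M}{|\Omega|}\int|u|^2/2$; testing (\ref{definitie_rho_E}) against $\frac{\gamma}{\gamma-1}\varrho_u^{\gamma-1}$ (a renormalization legitimate since $\varrho_u\in W^{2,2}$), together with the self-adjointness $\int T_\delta(\omega_\delta\ast\widetilde{\nabla\varrho_u^{\gamma}})\cdot u\,\dd x=\int\widetilde{\nabla\varrho_u^{\gamma}}\cdot(\omega_\delta\ast T_\delta u)\,\dd x$, turns the mollified pressure into the positive terms $\frac{4\varepsilon}{\gamma}\int|\nabla\varrho_u^{\gamma/2}|^2$ and a positive multiple of $\int\varrho_u^{\gamma}$, plus a lower-order remainder in $\int\varrho_u^{\gamma-1}$. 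No boundary term appears: the support of $\omega_\delta\ast T_\delta u$ lies in $\Omega_\delta$, and $\varrho_u$ satisfies the Neumann condition $\frac{\partial\varrho_u}{\partial\overrightarrow{n}}=0$. Finally, the remainder in $\int\varrho_u^{\gamma-1}$ and the forcing $\int(\varrho_u f+g)\cdot u$ are absorbed by Young's inequality, the mass bound $\int\varrho_u=M$, and the Sobolev embedding $W_0^{1,q}\hookrightarrow L^\infty$ (since $q>d$). One obtains $\|u\|_{W_0^{1,q}}\leq R$ with $R=R(M,\Omega,\alpha,\delta,\varepsilon,\eta,\|f,g\|_{L^\infty})$ independent of $\sigma$, and Leray--Schauder supplies the desired fixed point.

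\emph{Main obstacle.} The delicate part is the structural cancellation above. The convective divergence and the mollified pressure force are not individually controllable by the dissipation, and only the joint use of the continuity equation tested with $|u|^2/2$ and with $\varrho_u^{\gamma-1}$ exposes the correct sign structure; the very design of the approximation---in particular the compact support of $\omega_\delta\ast T_\delta u$ inside $\Omega$ together with the Neumann boundary condition on $\varrho_u$---is what ensures that all boundary integrals vanish and the argument can be carried through.
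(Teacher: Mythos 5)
Your proposal follows essentially the same route as the paper: well-definedness via Proposition~\ref{transport_stationar} and Browder's theorem, complete continuity of $\mathcal{E}$ established through a Minty-type monotonicity argument, and the uniform bound on the homotopy set obtained from the energy identity, whose structural cancellations (continuity equation tested with $|u|^2/2$ and with $\frac{\gamma}{\gamma-1}\varrho^{\gamma-1}$, together with the compact support of $\omega_\delta\ast T_\delta u$ and the Neumann condition on $\varrho$) you identify correctly, including the bookkeeping in the homotopy parameter $\sigma$.

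One claim in your compactness step is inaccurate as stated: under $v_n\rightharpoonup v$ in $W_0^{1,q}$ the components of $F(\varrho_{v_n},v_n)$ containing $\nabla v_n$ — namely $\varepsilon\nabla v_n\nabla\varrho_{v_n}$ and $\operatorname{div}\bigl(\varrho_{v_n}(\omega_\delta\ast T_\delta v_n)\otimes v_n\bigr)$ — converge only \emph{weakly}, since $\nabla v_n$ does not converge strongly in $L^q$. This does not break the argument: for Minty's trick one only needs $\int F(\varrho_{v_n},v_n)\cdot\mathcal{E}(v_n)\,\mathrm{d}x$ to converge, and each such term is a pairing of a weakly convergent factor ($\nabla v_n$ or $\nabla\mathcal{E}(v_n)$) with a strongly convergent one (products of $\varrho_{v_n}$, $\nabla\varrho_{v_n}$, $v_n$, $\mathcal{E}(v_n)$ in the relevant Lebesgue spaces, by compact embedding). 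This term-by-term weak-strong pairing is exactly how the paper passes to the limit, so your argument is repaired by replacing the strong-convergence claim with it; otherwise the proposal is sound.
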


Proposition \ref{Point_fix} is a consequence of the Schauder-Leray theorem, see Theorem \ref{Schauder_Leray}. We will first prove that $\mathcal{E}$ is
continuous and compact and in a second time that the set%
\begin{equation}
\mathcal{P=}\left\{  u\in\left(  W_{0}^{1,q}\left(  \Omega\right)  \right)
^{d}:u=\lambda\mathcal{E}(u)\text{ for some }\lambda\in(0,1]\right\}
\label{set_P}%
\end{equation}
is bounded. \smallskip\noindent

\underline{ We begin by showing that the operator $\mathcal{E}$ is
continuous.}

First, let us recall that%
\[
-\varepsilon\Delta\varrho+\eta\left(  \varrho-\frac{M}{\left\vert
\Omega\right\vert }\right)  +\operatorname{div}\left(  \varrho\omega_{\delta
}\ast T_{\delta}(v)\right)  =0,
\]
then%
\begin{equation}
\left\Vert \varrho\right\Vert _{W^{2,2}}\leq C\left(  M,\Omega,\varepsilon
,\delta\right)  \left(  1+\left\Vert v\right\Vert _{W^{1,q}}^{2}\right)  ,
\label{Nouvelle_estimation_reg_rho}%
\end{equation}
see Proposition \ref{transport_stationar}.

Consider $R>0$ and $\left(  v_{0},v_{1}\right)  \in\left(  W_{0}^{1,q}\left(
\Omega\right)  \right)  ^{d}\times\left(  W_{0}^{1,q}\left(  \Omega\right)
\right)  ^{d}$ such that
\[
\left\Vert v_{1}-v_{0}\right\Vert _{W^{1,q}}\leq1\text{ and }\left\Vert
v_{1}\right\Vert _{W^{1,q}},\left\Vert v_{0}\right\Vert _{W^{1,q}}\leq R.
\]
Also, for $i\in\left\{  0,1\right\}  $ consider%
\[
-\varepsilon\Delta\varrho_{i}+\eta\left(  \varrho_{i}-\frac{M}{\left\vert
\Omega\right\vert }\right)  +\operatorname{div}\left(  \varrho_{i}%
\omega_{\delta}\ast T_{\delta}(v_{i})\right)  =0
\]
and%
\[
-\operatorname{div}\mathbb{S}(\D \mathcal{E}\left(  v_{i}\right)  )-\alpha
\operatorname{div}\left(  \left\vert \nabla\mathcal{E}\left(  v_{i}\right)
\right\vert ^{q-2}\nabla\mathcal{E}\left(  v_{i}\right)  \right)  =F\left(
\varrho_{i},v_{i}\right)  .
\]

First of all, the estimates $\left(  \text{\ref{Nouvelle_estimation_reg_rho}%
}\right)  $ allow us to conclude that%
\[
\left\Vert \varrho_{0}\right\Vert _{W^{2,2}}+\left\Vert \varrho_{1}\right\Vert
_{W^{2,2}}\leq C\left(  R,M,\Omega,\varepsilon,\delta,\eta\right)  .
\]
The difference $\left(  \varrho_{1}-\varrho_{0}\right)  $ verifies%
\begin{equation}
-\varepsilon\Delta\left(  \varrho_{1}-\varrho_{0}\right)  +\eta\left(  \left(
\varrho_{1}-\varrho_{0}\right)  \right)  +\operatorname{div}\left(
\varrho_{1}\omega_{\delta}\ast T_{\delta}(v_{1})-\varrho_{0}\omega_{\delta
}\ast T_{\delta}(v_{0})\right)  =0, \label{diferenta}%
\end{equation}
which provides the following estimate:%
\begin{equation}
\eta\int_{\Omega}\left\vert \varrho_{1}-\varrho_{0}\right\vert \leq
\int_{\Omega}\left\vert \operatorname{div}\left(  \varrho_{0}\left(
v_{1}-v_{0}\right)  \right)  \right\vert \leq\left\Vert \varrho_{0}\right\Vert
_{W^{1,2}}\left\Vert v_{1}-v_{0}\right\Vert _{W^{1,2}}\leq C(\Omega)\left\Vert
\varrho_{0}\right\Vert _{W^{1,2}}\left\Vert v_{1}-v_{0}\right\Vert _{W^{1,q}}.
\label{dependenta_norma_L1}%
\end{equation}
where we use that $q\geq2$. Next, we see that%
\begin{align}
\left\Vert \varrho_{1}-\varrho_{0}\right\Vert _{W^{1,2}}  &  \leq C\left\Vert
\varrho_{1}\omega_{\delta}\ast T_{\delta}(v_{1})-\varrho_{0}\omega_{\delta
}\ast T_{\delta}(v_{0})\right\Vert _{L^{2}}\nonumber\\
&  \leq C\left\Vert \omega_{\delta}\ast T_{\delta}(v_{1})\right\Vert
_{L^{\infty}}\left\Vert \varrho_{1}-\varrho_{0}\right\Vert _{L^{2}%
}+C\left\Vert \varrho_{0}\right\Vert _{L^{2}}\left\Vert \omega_{\delta}\ast
T_{\delta}(v_{1})-\omega_{\delta}\ast T_{\delta}(v_{0})\right\Vert
_{L^{\infty}}\nonumber\\
&  \leq C\left(  R,M,\Omega,\varepsilon,\delta,\eta\right)  \left(  \left\Vert
\varrho_{1}-\varrho_{0}\right\Vert _{L^{1}}^{\frac{1}{2}}+\left\Vert
v_{1}-v_{0}\right\Vert _{W^{1,q}}\right) \nonumber\\
&  \leq C\left(  R,M,\Omega,\varepsilon,\delta,\eta\right)  \left(  \left\Vert
v_{1}-v_{0}\right\Vert _{W^{1,q}}^{\frac{1}{2}}+\left\Vert v_{1}%
-v_{0}\right\Vert _{W^{1,q}}\right)  . \label{continuite_T_1}%
\end{align}
where we use $\left(  \text{\ref{dependenta_norma_L1}}\right)  $.

Next, by taking the difference of the velocity equations we end up with%
\begin{align}
&  -\operatorname{div}\left[  \mathbb{S}(\D\mathcal{E}\left(  v_{1}\right)
)-\mathbb{S}(\D \mathcal{E}\left(  v_{0}\right)  )\right]  -\alpha
\operatorname{div}\left(  \left\vert \nabla\mathcal{E}\left(  v_{1}\right)
\right\vert ^{q-2}\nabla\mathcal{E}\left(  v_{1}\right)  -\left\vert
\nabla\mathcal{E}\left(  v_{0}\right)  \right\vert ^{q-2}\nabla\mathcal{E}%
\left(  v_{0}\right)  \right) \nonumber\\
&  =F\left(  \varrho_{1},v_{1}\right)  -F\left(  \varrho_{0},v_{0}\right)  ,
\label{diff_v}%
\end{align}
where, we recall%
\begin{align*}
F\left(  \varrho,v\right)   &  =-\operatorname{div}\left(  \varrho\left(
\omega_{\delta}\ast T_{\delta}(v)\right)  \otimes v\right)  -T_{\delta}%
(\omega_{\delta}\ast\widetilde{\nabla\varrho^{\gamma}})\\
&  -\dfrac{\eta}{2}\varrho v-\varepsilon\nabla\varrho\nabla v+\varrho f+g.
\end{align*}
From $\left(  \text{\ref{diff_v}}\right)  $ we deduce that%
\begin{align*}
&  \left\Vert \nabla v_{1}-\nabla v_{0}\right\Vert _{W^{1,r}}^{r}%
+\alpha\left\Vert \nabla v_{1}-\nabla v_{0}\right\Vert _{W^{1,q}}^{q}\\
&  \leq C\left(  R,M,\Omega,\varepsilon,\delta,\eta,\alpha\right)  \left\{
\left\Vert \varrho_{1}v_{1}-\varrho_{0}v_{0}\right\Vert _{L^{1}}^{\frac
{q}{q-1}}+\left\Vert \nabla v_{1}\nabla\varrho_{1}-\nabla v_{0}\nabla
\varrho_{0}\right\Vert _{L^{1}}^{\frac{q}{q-1}}+\left\Vert f\left(
\varrho_{1}-\varrho_{0}\right)  \right\Vert _{L^{1}}^{\frac{q}{q-1}}\right. \\
&  \left.  \text{ \ \ \ \ \ \ \ \ \ \ \ \ \ \ \ \ \ \ \ \ \ \ \ \ \ \ \ \ }%
+\left\Vert T_{\delta}(\omega_{\delta}\ast\widetilde{\nabla\varrho_{1}%
^{\gamma}})-T_{\delta}(\omega_{\delta}\ast\widetilde{\nabla\varrho_{0}%
^{\gamma}}\right\Vert _{L^{1}}^{\frac{q}{q-1}}+\left\Vert \varrho_{1}%
\omega_{\delta}\ast T_{\delta}(v_{1})\otimes v_{1}-\varrho_{0}\omega_{\delta
}\ast T_{\delta}(v_{0})\otimes v_{0}\right\Vert _{L^{\frac{q}{q-1}}}^{\frac
{q}{q-1}}\right\}  .
\end{align*}
Using $\left(  \text{\ref{dependenta_norma_L1}}\right)  $, the first term is
treated as follows%
\begin{align}
\left\Vert \varrho_{0}v_{0}-\varrho_{1}v_{1}\right\Vert _{L^{1}}  &
\leq\left\Vert v_{0}\right\Vert _{L^{\infty}}\left\Vert \varrho_{1}%
-\varrho_{0}\right\Vert _{L^{1}}+\left\Vert \varrho_{1}\right\Vert _{L^{1}%
}\left\Vert v_{1}-v_{0}\right\Vert _{L^{\infty}}\nonumber\\
&  \leq C\left(  R,M,\Omega,\varepsilon,\delta\right)  \left\Vert v_{1}%
-v_{0}\right\Vert _{W^{1,q}}. \label{Term0}%
\end{align}
The second term is estimated as follows%
\begin{align}
&  \left\Vert \varrho_{1}\omega_{\delta}\ast T_{\delta}(v_{1})\otimes
v_{1}-\varrho_{0}\omega_{\delta}\ast T_{\delta}(v_{0})\otimes v_{0}\right\Vert
_{L^{\frac{q}{q-1}}}\nonumber\\
&  \leq\left\Vert \omega_{\delta}\ast T_{\delta}(v_{1})\otimes v_{1}%
\right\Vert _{L^{\infty}}\left\Vert \varrho_{1}-\varrho_{0}\right\Vert
_{L^{\frac{q}{q-1}}}+\left\Vert \varrho_{0}\right\Vert _{L^{\frac{q}{q-1}}%
}\left\Vert \omega_{\delta}\ast T_{\delta}(v_{1})\otimes v_{1}-\varrho
_{0}\omega_{\delta}\ast T_{\delta}(v_{0})\otimes v_{0}\right\Vert _{L^{\infty
}}\nonumber\\
&  \leq\left\Vert v_{1}\right\Vert _{L^{\infty}}^{2}\left\Vert \varrho
_{1}-\varrho_{0}\right\Vert _{L^{2}}\nonumber\\
&  +\left\Vert \varrho_{0}\right\Vert _{L^{\infty}}\left\{  \left\Vert
v_{1}\right\Vert _{L^{\infty}}\left\Vert \omega_{\delta}\ast
T_{\delta}(v_{1})-\omega_{\delta}\ast T_{\delta}(v_{0})\right\Vert
_{L^{\infty}}+\left\Vert \omega_{\delta}\ast T_{\delta}(v_{0})\right\Vert
_{L^{\infty}}\left\Vert v_{1}-v_{0}\right\Vert _{L^{\infty}}\right\}
\nonumber\\
&  \leq C\left(  R,M,\Omega,\varepsilon,\delta,\eta,\alpha\right)  \left(
\left\Vert v_{1}-v_{0}\right\Vert _{W^{1,q}}^{\frac{1}{2}}+\left\Vert
v_{1}-v_{0}\right\Vert _{W^{1,q}}\right)  . \label{Term1}%
\end{align}
The third term is treated using the Sobolev inequality along with $\left(
\text{\ref{Nouvelle_estimation_reg_rho}}\right)  $ and $\left(
\text{\ref{dependenta_norma_L1}}\right)  $%
\begin{align}
\left\Vert T_{\delta}(\omega_{\delta}\ast\widetilde{\nabla\varrho_{1}^{\gamma
}})-T_{\delta}(\omega_{\delta}\ast\widetilde{\nabla\varrho_{1}^{\gamma}%
})\right\Vert _{L^{1}}  &  \leq C\left(  \delta\right)  \left\Vert \varrho
_{1}^{\gamma}-\varrho_{1}^{\gamma}\right\Vert _{L^{1}}\nonumber\\
&  \leq C\left(  \delta\right)\left(  \left\Vert \varrho_{1}\right\Vert _{L^{\infty}}^{\gamma
-1}+\left\Vert \varrho_{0}\right\Vert _{L^{\infty}}^{\gamma-1}\right)
\left\Vert \varrho_{1}-\varrho_{0}\right\Vert _{L^{1}}\nonumber\\
&  \leq C\left(  R,M,\Omega,\varepsilon,\delta,\eta,\alpha\right)  \left\Vert
v_{1}-v_{0}\right\Vert _{W^{1,q}}. \label{Term2}%
\end{align}
The fourth term is treated with the help of relations $\left(
\text{\ref{Nouvelle_estimation_reg_rho}}\right)  $ and $\left(
\text{\ref{continuite_T_1}}\right)  $
\begin{align}
&  \left\Vert \nabla v_{1}\nabla\varrho_{1}-\nabla v_{0}\nabla\varrho
_{0}\right\Vert _{L^{1}}\nonumber\\
&  \leq\left\Vert \nabla v_{1}\right\Vert _{L^{2}}\left\Vert \nabla
(\varrho_{1}-\varrho_{0})\right\Vert _{L^{2}}+\left\Vert \nabla\varrho
_{1}\right\Vert _{L^{2}}\left\Vert \nabla v_{1}-\nabla v_{0}\right\Vert
_{L^{2}}\nonumber\\
&  \leq C\left(  R,M,\Omega,\alpha,\varepsilon,\delta,\eta\right)  \left(
\left\Vert v_{1}-v_{0}\right\Vert _{W^{1,q}}^{\frac{1}{2}}+\left\Vert
v_{1}-v_{0}\right\Vert _{W^{1,q}}\right)  . \label{Term3}%
\end{align}
From $\left(  \text{\ref{Term0}}\right)  $, $\left(  \text{\ref{Term1}%
}\right)  $, $\left(  \text{\ref{Term2}}\right)  $, $\left(  \text{\ref{Term3}%
}\right)  $ we obtain that $\mathcal{E}$ is continuous.

\underline{Let us show now that the operator $\mathcal{E}$ is compact.}
\medskip\noindent

Consider $R>0$ and a sequence $\left(  v_{n}\right)  _{n}\subset\left(
W_{0}^{1,q}\left(  \Omega\right)  \right)  ^{d}$ such that for all $n\geq0$ we
have
\[
\left\Vert v_{n}\right\Vert _{W^{1,q}}\leq R.
\]
We consider%
\[
-\varepsilon\Delta\varrho_{n}+\eta\left(  \varrho_{n}-\frac{M}{\left\vert
\Omega\right\vert }\right)  +\operatorname{div}\left(  \varrho_{n}%
\omega_{\delta}\ast T_{\delta}(v_{n})\right)  =0
\]
and $\left(  \mathcal{E}\left(  v_{n}\right)  \right)  _{n}\subset\left(
W_{0}^{1,q}\left(  \Omega\right)  \right)  ^{d}$ constructed as the solutions of
\begin{equation}
-\operatorname{div}\mathbb{S}(\D\mathcal{E}\left(  v_{n}\right)  )-\alpha
\operatorname{div}\left(  \left\vert \nabla\mathcal{E}\left(  v_{n}\right)
\right\vert ^{q-2}\nabla\mathcal{E}\left(  v_{n}\right)  \right)  =F\left(
\varrho_{n},v_{n}\right)  . \label{equation_vn}%
\end{equation}
In the follow, by $C$ we mean a generic constant that $C=C\left(
R,M,\Omega,\varepsilon,\delta,\eta\right)  $ independent of $n$. First of all,
we obtain that%
\[
\left\Vert \varrho_{n}\right\Vert _{W^{2,2}}\leq C.
\]
such that up to a subsequence that is still denoted $\left(  \varrho
_{n}\right)  _{n}$ by slightly abusing notation:
\begin{equation}
\left.
\begin{array}
[c]{r}%
\varrho_{n}\rightarrow\varrho\text{ strongly in }L^{\infty}\left(
\Omega\right)  ,\\
\\
\nabla\varrho_{n}\rightarrow\nabla\varrho_{n}\text{ strongly in }L^{m}\left(
\Omega\right)  \text{ for all }m\in\left\{
\begin{array}
[c]{c}%
\lbrack2,6)\text{ if }d=3,\\
\lbrack2,\infty)\text{ for }d=2
\end{array}
\right.  ,\\
\\
v_{n}\rightarrow v\text{ and }\mathcal{E}\left(  v_{n}\right)  \rightarrow
\overline{\mathcal{E}\left(  v\right)  }\text{ strongly in }(L^{\infty}\left(
\Omega\right)  )^{d},\\
\\
\nabla v_{n}\rightharpoonup\nabla v\text{ and }\nabla\mathcal{E}\left(
v_{n}\right)  \rightharpoonup\nabla\overline{\mathcal{E}\left(  v\right)
}\text{ weakly in }(W_{0}^{1,q}\left(  \Omega\right)  ^{d\times d},\\
\\
\mathbb{S(D}\mathcal{E}\left(  v_{n}\right)  )\rightharpoonup\overline
{\mathbb{S(D}\mathcal{E}\left(  v\right)  )},\left\vert \nabla\mathcal{E}%
\left(  v_{n}\right)  \right\vert ^{q-2}\nabla\mathcal{E}\left(  v_{n}\right)
\rightharpoonup\overline{\left\vert \nabla\mathcal{E}\left(  v\right)
\right\vert ^{q-2}\nabla\mathcal{E}\left(  v\right)  }\text{ weakly in }%
(W_{0}^{1,\frac{q}{q-1}}\left(  \Omega\right)  )^{d\times d},
\end{array}
\right.  \text{ }\label{list_conv_n}%
\end{equation}
Using theese relations, we obtain that%
\[
-\varepsilon\Delta\varrho+\eta\left(  \varrho-\frac{M}{\left\vert
\Omega\right\vert }\right)  +\operatorname{div}\left(  \varrho\omega_{\delta
}\ast T_{\delta}\left(  v\right)  \right)  =0
\]
along with%
\begin{equation}
-\operatorname{div}\overline{\mathbb{S(D}\mathcal{E}\left(  v\right)
)}-\alpha\operatorname{div}\left(  \overline{\left\vert \nabla\mathcal{E}%
\left(  v\right)  \right\vert ^{q-2}\nabla\mathcal{E}\left(  v\right)
}\right)  =F\left(  \varrho,v\right)  \label{limit_equation_appendix}%
\end{equation}
Testing $\left(  \text{\ref{equation_vn}}\right)  $ with $\mathcal{E}\left(
v_{n}\right)  $ we obtain that%
\begin{align*}
\int_{\Omega}\mathbb{S(D}\mathcal{E}\left(  v_{n}\right)  ) &  :\mathbb{D}%
\mathcal{E}\left(  v_{n}\right)  +\alpha\int_{\Omega}\left\vert \nabla
\mathcal{E}\left(  v_{n}\right)  \right\vert ^{q}\mathrm{d}x\\
&  =\int_{\Omega}\varrho_{n}\left(  \omega_{\delta}\ast T_{\delta}%
(v_{n})\right)  \otimes v_{n}:\nabla\mathcal{E}\left(  v_{n}\right)
\mathrm{d}x\\
&  +\frac{\eta}{2}\int_{\Omega}\varrho_{n}v_{n}\mathcal{E}\left(
v_{n}\right)  \mathrm{d}x+\varepsilon\int_{\Omega}\nabla\varrho_{n}\nabla
v_{n}\mathcal{E}\left(  v_{n}\right)  \mathrm{d}x+\int_{\Omega}\left(
\varrho_{n}f+g\right)  \mathcal{E}\left(  v_{n}\right)  \mathrm{d}x.
\end{align*}
Passing to the limit, and using $\left(  \text{\ref{list_conv_n}}\right)  $ we
find that%
\begin{align*}
&  \lim_{n\rightarrow\infty}\left\{  \int_{\Omega}\mathbb{S(}\D\mathcal{E}%
\left(  v_{n}\right)  ):\mathbb{D}\mathcal{E}\left(  v_{n}\right)
\mathrm{d}x+\alpha\int_{\Omega}\left\vert \nabla\mathcal{E}\left(
v_{n}\right)  \right\vert ^{q}\mathrm{d}x\right\}  \\
&  =\int_{\Omega}\varrho\left(  \omega_{\delta}\ast T_{\delta}(v)\right)
\otimes v:\nabla\overline{\mathcal{E}\left(  v\right)  }\mathrm{d}x\\
&  +\frac{\eta}{2}\int_{\Omega}\varrho v\overline{\mathcal{E}\left(  v\right)
}\mathrm{d}x+\varepsilon\int_{\Omega}\nabla\varrho\nabla v\overline
{\mathcal{E}\left(  v\right)  }\mathrm{d}x+\int_{\Omega}\left(  \varrho
f+g\right)  \overline{\mathcal{E}\left(  v\right)  }\mathrm{d}x\\
&  =\int_{\Omega}\overline{\mathbb{S(}\D\mathcal{E}\left(  v\right)
)}:\mathbb{D}\overline{\mathcal{E}\left(  v\right)  }\mathrm{d}x+\alpha
\int_{\Omega}\overline{\left\vert \nabla\mathcal{E}\left(  v\right)
\right\vert ^{q-2}\nabla\mathcal{E}\left(  v\right)  }:\nabla\overline
{\mathcal{E}\left(  v\right)  }\mathrm{d}x.
\end{align*}
Of course, the last equality comes from testing $\left(
\text{\ref{limit_equation_appendix}}\right)  $. Using the monotonicity of the
operator $\mathbb{S}\left(  \cdot\right)  \mathbb{+\alpha}\left\vert
\nabla\cdot\right\vert ^{q-2}\nabla\cdot$ we obtain that%
\[
\lim\limits_{n\rightarrow\infty}\mathcal{E}\left(  v_{n}\right)
=\overline{\mathcal{E}\left(  v\right)  }%
\]
in $(W^{1,q}(\Omega))^{d}$. The conclusion is that the operator $\mathcal{E}$
is compact.

\underline{Next, we prove that the set $\mathcal{P}$ defined in $\left(
\text{\ref{set_P}}\right)  $ is bounded.} Recall that%
\[
\mathcal{P=}\left\{  u\in(W_{0}^{1,q}\left(  \Omega\right)  )^{d}%
:u=\lambda\mathcal{E}\left(  u\right)  \text{ for some }\lambda\in
(0,1]\right\}  .
\]
Consider $\lambda\in(0,1]$ and $u\in(W_{0}^{1,q}\left(  \Omega\right)  )^{d}$
such that $u=\lambda\mathcal{E}\left(  u\right)  $. Obviously, one has%
\begin{equation}
\left\{
\begin{array}
[c]{l}%
-\varepsilon\Delta\varrho+\eta\left(  \varrho-\dfrac{M}{\left\vert
\Omega\right\vert }\right)  +\operatorname{div}\left(  \varrho\omega_{\delta
}\ast T_{\delta}(\lambda\mathcal{E}\left(  u\right)  )\right)  =0,\\
-\operatorname{div}\mathbb{S(}\mathcal{E}\left(  \mathbb{D}u\right)
)-\alpha\operatorname{div}\left(  \left\vert \nabla\mathcal{E}\left(
u\right)  \right\vert ^{q-2}\nabla\mathcal{E}\left(  u\right)  \right)
=F\left(  \varrho,\lambda\mathcal{E}\left(  u\right)  \right)  ,\\%
%TCIMACRO{\dint _{\Omega}}%
%BeginExpansion
{\displaystyle\int_{\Omega}}
%EndExpansion
\varrho=M,\varrho\geq0,\text{ }\mathcal{E}\left(  u\right)  \in(W_{0}%
^{1,q}\left(  \Omega\right)  )^{d}.
\end{array}
\right.  \label{Leray-Schauder}%
\end{equation}

Observe that%
\begin{align}
&  \left\langle \operatorname{div}\left(  \varrho\omega_{\delta}\ast
T_{\delta}(\lambda\mathcal{E}\left(  u\right)  )\otimes\lambda\mathcal{E}%
\left(  u\right)  \right)  +\varepsilon\nabla\left(  \lambda\mathcal{E}\left(
u\right)  \right)  \nabla\varrho,\mathcal{E}\left(  u\right)  \right\rangle \\
&  =\frac{\lambda}{2}\operatorname{div}\left(  \varrho\omega_{\delta}\ast
T_{\delta}(\lambda\mathcal{E}\left(  u\right)  )\left\vert \mathcal{E}\left(
u\right)  \right\vert ^{2}\right)  +\operatorname{div}\left(  \varrho
\omega_{\delta}\ast T_{\delta}(\lambda\mathcal{E}\left(  u\right)  \right)
\frac{\lambda\left\vert \mathcal{E}\left(  u\right)  \right\vert ^{2}}%
{2}+\dfrac{\varepsilon\lambda}{2}\left\langle \nabla\left\vert \mathcal{E}%
\left(  u\right)  \right\vert ^{2},\nabla\varrho\right\rangle \nonumber\\
&  =\frac{\lambda}{2}\operatorname{div}\left(  \varrho\omega_{\delta}\ast
T_{\delta}(\lambda\mathcal{E}\left(  u\right)  )\left\vert \mathcal{E}\left(
u\right)  \right\vert ^{2}\right)  +\left(  \varepsilon\Delta\varrho
-\eta\left(  \varrho-\dfrac{M}{\left\vert \Omega\right\vert }\right)  \right)
\frac{\lambda\left\vert \mathcal{E}\left(  u\right)  \right\vert ^{2}}%
{2}+\dfrac{\varepsilon\lambda}{2}\left\langle \nabla\left\vert \mathcal{E}%
\left(  u\right)  \right\vert ^{2},\nabla\varrho\right\rangle \nonumber\\
&  =\frac{\lambda}{2}\operatorname{div}\left(  \varrho\omega_{\delta}\ast
T_{\delta}(\lambda\mathcal{E}\left(  u\right)  )\left\vert \mathcal{E}\left(
u\right)  \right\vert ^{2}\right)  +\dfrac{\varepsilon\lambda}{2}%
\operatorname{div}\left(  \left\vert \mathcal{E}\left(  u\right)  \right\vert
^{2}\nabla\varrho\right)  -\left(  \varrho-\dfrac{M}{\left\vert \Omega
\right\vert }\right)  \frac{\lambda\eta\left\vert \mathcal{E}\left(  u\right)
\right\vert ^{2}}{2}. \label{calcul_div(u_otimesu)}%
\end{align}
Next%
\begin{align*}
\int_{\Omega}\mathcal{E}\left(  u\right)  T_{\delta}(\omega_{\delta}%
\ast\widetilde{\nabla\varrho^{\gamma}})\mathrm{d}x  &  =-\int_{\Omega}%
\varrho^{\gamma}\operatorname{div}\omega_{\delta}\ast T_{\delta}%
(\mathcal{E}\left(  u\right)  )\mathrm{d}x\\
&  =\frac{4\varepsilon}{\lambda\gamma}\int_{\Omega}\left\vert \nabla
\varrho^{\frac{\gamma}{2}}\right\vert ^{2}\mathrm{d}x+\frac{\gamma\eta
}{\left(  \gamma-1\right)  \lambda}\left(  \int_{\Omega}\varrho^{\gamma
}\mathrm{d}x-\frac{M}{\left\vert \Omega\right\vert }\int_{\Omega}%
\varrho^{\gamma-1}\mathrm{d}x\right)  .
\end{align*}
Thus, we have that%
\begin{align}
\int_{\Omega}\mathbb{S}(\D \mathcal{E}\left(  u\right)  )  &  :\D\mathcal{E}\left(  u\right)  \mathrm{d}x+\alpha\int_{\Omega}\left\vert
\nabla\mathcal{E}\left(  u\right)  \right\vert ^{q}\mathrm{d}x+\frac
{4\varepsilon}{\lambda\gamma}\int_{\Omega}\left\vert \nabla\varrho
^{\frac{\gamma}{2}}\right\vert ^{2}\mathrm{d}x+\frac{\gamma\eta}{\left(
\gamma-1\right)  \lambda}\int_{\Omega}\varrho^{\gamma}\mathrm{d}x+\frac{\eta
M}{\left\vert \Omega\right\vert }\int_{\Omega}\frac{\left\vert \mathcal{E}%
\left(  u\right)  \right\vert ^{2}}{2}\mathrm{d}x\nonumber\\
&  =\frac{\gamma\eta}{\left(  \gamma-1\right)  \lambda}\frac{M}{\left\vert
\Omega\right\vert }\int_{\Omega}\varrho^{\gamma-1}\mathrm{d}x+\int_{\Omega
}\left(  \varrho f+g\right)  \mathcal{E}\left(  u\right)  \mathrm{d}x
\label{bound_vitesse_ind_eps_delta}%
\end{align}
We multiply by $\lambda^{q}$ the previous identity and since $\lambda
\mathcal{E}\left(  u\right)  =u$, $\lambda\leq1$ and some straightforward
interpolation inequalities, we find that
\[
\alpha\int_{\Omega}\left\vert \nabla u\right\vert ^{q}\mathrm{d}x\leq C
\]
by a constant $C$ that is independent of $\lambda$. The last estimate implies
that $\mathcal{P}$ is a bounded set of $\left(  W^{1,q}\left(  \Omega\right)
\right)  ^{d}$. Having proved that the operator $\mathcal{E}$ verifies the
hypothesis announced in Theorem \ref{Schauder_Leray} we conclude
that$\mathcal{\ E}$ admits a fixed point. This concludes the proof of
Proposition \ref{Point_fix}.

As an immediate consequence of Proposition \ref{Point_fix} we get the
existence part of Theorem \ref{Existence_4_levels} follows by observing that a
fixed point $u\in\left(  W_{0}^{1,q}\left(  \Omega\right)  \right)  ^{d}$ of
the operator $\mathcal{E}$ defined by $\left(  \text{\ref{definitie_E}%
}\right)  $ turns out to verify $\left(
\text{\ref{Approximare_eps_delta_eta_alpha}}\right)  $. The first estimate of
$\left(  \text{\ref{estimari_eps_delta_prop}}\right)  $ is nothing else but
$\left(  \text{\ref{bound_vitesse_ind_eps_delta}}\right)  $ with $\lambda=1$
followed by straightforward use of interpolation and H\"{o}lder inequalities.
The second estimate also follows from straightforward use of interpolation and
H\"{o}lder inequalities.

\section{Functional analysis tools\label{Section_tools}}

\subsection*{Functional analysis tools}

We begin with the classical

\begin{proposition}
[Sobolev-Poincar\'{e} inequality]Consider $p\in\lbrack1,d)$ and $g\in
W_{0}^{1,p}\left(  \Omega\right)  $ or $g\in W^{1,p}\left(  \Omega\right)  $
with $\int_{\Omega}g\mathrm{d}x=0$. Then,
\[
\left\Vert g\right\Vert _{L^{p_{\star}}}\leq\left\Vert \nabla g\right\Vert
_{L^{p}}%
\]
where $\frac{1}{p^{\star}}=\frac{1}{p}-\frac{1}{d}$.
\end{proposition}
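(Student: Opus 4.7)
The plan is to handle the two assertions separately, noting that as written the inequality lacks a constant $C = C(d,p,\Omega)$ on the right hand side, which I assume to be a typographical omission. The first case, $g \in W_0^{1,p}(\Omega)$, reduces to the Gagliardo-Nirenberg-Sobolev inequality on $\mathbb{R}^d$ applied to the extension of $g$ by zero, which lies in $W^{1,p}(\mathbb{R}^d)$ by density of $\mathcal{C}_c^\infty(\Omega)$ in $W_0^{1,p}(\Omega)$. The second case, $g \in W^{1,p}(\Omega)$ with $\int_\Omega g\,\mathrm{d}x = 0$, will follow by combining the Sobolev embedding $W^{1,p}(\Omega) \hookrightarrow L^{p^\star}(\Omega)$ with the Poincar\'e-Wirtinger inequality for zero-mean functions.

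For the first case, by density it suffices to prove GNS for $g \in \mathcal{C}_c^1(\mathbb{R}^d)$. I would first establish the critical $p=1$ case:
\[
\|g\|_{L^{d/(d-1)}(\mathbb{R}^d)} \leq \prod_{i=1}^{d} \|\partial_i g\|_{L^1(\mathbb{R}^d)}^{1/d} \leq \|\nabla g\|_{L^1(\mathbb{R}^d)},
\]
which follows from the pointwise estimate $|g(x)| \leq \int_{\mathbb{R}} |\partial_i g|\,\mathrm{d}x_i$ (fundamental theorem of calculus), taking the product over $i$ to the power $1/(d-1)$, integrating, and iterating the generalized H\"older inequality in each coordinate direction. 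The general exponent $p \in (1,d)$ is then recovered by applying this inequality to $|g|^{\beta}$ with the sharp choice $\beta = p(d-1)/(d-p)$, and using H\"older on the right to separate a factor of $\|g\|_{L^{p^\star}}^{\beta-1}$ from $\|\nabla g\|_{L^p}$, which can then be divided out.

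For the second case, the Sobolev embedding $W^{1,p}(\Omega) \hookrightarrow L^{p^\star}(\Omega)$ follows from the first case together with a bounded extension operator $E: W^{1,p}(\Omega) \to W^{1,p}(\mathbb{R}^d)$, which exists because $\Omega$ is $\mathcal{C}^2$. The Poincar\'e-Wirtinger inequality $\|g\|_{L^p(\Omega)} \leq C\|\nabla g\|_{L^p(\Omega)}$ for $g$ of zero mean is then obtained by a classical compactness/contradiction argument: if it failed, one would produce a sequence $(g_n)$ of zero-mean $W^{1,p}$ functions with $\|g_n\|_{L^p} = 1$ and $\|\nabla g_n\|_{L^p} \to 0$; Rellich-Kondrachov yields a subsequence converging strongly in $L^p(\Omega)$ to a function which must be constant (since its gradient is a strong limit of $\nabla g_n$ hence zero) and of zero mean, hence identically zero, contradicting the normalization. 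Combining the embedding with Poincar\'e-Wirtinger immediately gives $\|g\|_{L^{p^\star}} \leq C(\|g\|_{L^p} + \|\nabla g\|_{L^p}) \leq C\|\nabla g\|_{L^p}$. No genuine obstacle is expected here, as this is a standard textbook result whose only inputs are the $\mathcal{C}^2$ regularity of $\Omega$ (already assumed throughout the paper) and Rellich-Kondrachov compactness.
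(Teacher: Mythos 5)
Your proof is correct. Note that the paper does not prove this proposition at all: it is stated in Appendix B as a classical fact (the appendix results are quoted from standard references such as Novotn\'y--Stra\v{s}kraba), so there is no argument to compare against; your write-up simply supplies the standard textbook proof, namely the Gagliardo--Nirenberg--Sobolev inequality via the iterated H\"older/Loomis--Whitney argument for $p=1$ and the substitution $|g|^{\beta}$ with $\beta=p(d-1)/(d-p)$ for $p\in(1,d)$, combined for the zero-mean case with a Sobolev extension operator (available since $\Omega$ is $\mathcal{C}^2$, hence Lipschitz) and the Poincar\'e--Wirtinger inequality obtained by the Rellich--Kondrachov compactness/contradiction argument. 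Two minor points you handle implicitly and correctly: the inequality as printed is indeed missing a constant $C(d,p,\Omega)$ (and, in the zero-mean case, cannot hold with constant $1$ in general), and the conclusion that a function with vanishing gradient is constant uses that $\Omega$, being a domain, is connected.
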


This section is devoted to a quick recall of the main results from functional
analysis that we used thought the text. Consider $p\in\lbrack1,\infty)$, $g\in
L^{p}\left(  \mathbb{R}^{d}\right)  $ and $\omega\in\mathcal{D}\left(
\mathbb{R}^{3}\right)  $ a smooth, nonnegative, even function compactly
supported in the unit ball centered at the origin and with integral equal to
$1$. For all $\varepsilon>0$, we introduce the averaged functions
\begin{equation}
g_{\varepsilon}=g\ast\omega_{\varepsilon}(x)\qquad\hbox{ where }\qquad
\omega_{\varepsilon}=\frac{1}{\varepsilon^{3}}\omega(\frac{x}{\varepsilon}).
\label{notation_approx}%
\end{equation}
We recall the following classical analysis result
\[
\lim_{\varepsilon\rightarrow0}\left\Vert g_{\varepsilon}-g\right\Vert _{L^{p}%
}=0.
\]
Moreover, for any multi-index $\alpha$ there exists a constant $C\left(
\varepsilon,\alpha\right)  $ such that
\[
\left\Vert \partial^{\alpha}g_{\varepsilon}\right\Vert _{L^{\infty}}\leq
C\left(  \varepsilon,\alpha\right)  \left\Vert g\right\Vert _{L^{p}}.
\]
Next let us recall the following result concerning the commutator between the
convolution with $\omega_{\varepsilon}$ and the product with a given function.
More precisely, we have that

\begin{proposition}
\label{Prop_ren1}Consider $d\geq2,$ $1<q,\beta<\infty$, $\frac{1}{s}=\frac
{1}{\beta}+\frac{1}{q}\leq1$ and let $\left(  a,b\right)  $ be such that $a\in
L_{loc}^{\beta}\left(  \mathbb{R}^{d}\right)  $ and $b,\nabla b\in L_{loc}%
^{q}\left(  \mathbb{R}^{d}\right)  $. Then, we have%
\[
\lim r_{\varepsilon}\left(  a,b\right)  =0\text{ in }L_{loc}^{s}\left(
\mathbb{R}^{d}\right)
\]
where
\begin{equation}
r_{\varepsilon}\left(  a,b\right)  =\partial_{i}\left(  a_{\varepsilon
}b\right)  -\partial_{i}\left(  \left(  ab\right)  _{\varepsilon}\right)  ,
\label{def_reminder}%
\end{equation}
with $i\in\overline{1,d}$.
\end{proposition}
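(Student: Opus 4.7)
The statement is a form of Friedrichs' commutator lemma. My plan is to decompose $r_\varepsilon(a,b)$ so that the cancellation becomes explicit, and then analyze each piece using the $W^{1,q}_{\mathrm{loc}}$ regularity of $b$ together with $L^p_{\mathrm{loc}}$ continuity of translations. Throughout, I work in $L^s_{\mathrm{loc}}(\mathbb{R}^d)$, where $\tfrac{1}{s}=\tfrac{1}{\beta}+\tfrac{1}{q}$.

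First I would write the derivative explicitly and split
\[
r_\varepsilon(a,b)(x)=b(x)\,\partial_i a_\varepsilon(x)+a_\varepsilon(x)\,\partial_i b(x)-\int a(y)b(y)\,\partial_{x_i}\omega_\varepsilon(x-y)\,\mathrm{d}y.
\]
Combining the first and third terms yields
\[
r_\varepsilon(a,b)(x)=I_\varepsilon(x)+a_\varepsilon(x)\,\partial_i b(x),\qquad I_\varepsilon(x)=\int a(y)\bigl(b(x)-b(y)\bigr)\partial_{x_i}\omega_\varepsilon(x-y)\,\mathrm{d}y.
\]
The term $a_\varepsilon\,\partial_i b$ is easy: $a_\varepsilon\to a$ in $L^\beta_{\mathrm{loc}}$, $\partial_i b\in L^q_{\mathrm{loc}}$, so by Hölder's inequality $a_\varepsilon\partial_i b\to a\,\partial_i b$ in $L^s_{\mathrm{loc}}$.

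The heart of the proof is to show $I_\varepsilon\to -a\,\partial_i b$ in $L^s_{\mathrm{loc}}$. After the substitution $z=(x-y)/\varepsilon$,
\[
I_\varepsilon(x)=\int_{\mathbb{R}^d}a(x-\varepsilon z)\,\frac{b(x)-b(x-\varepsilon z)}{\varepsilon}\,\partial_i\omega(z)\,\mathrm{d}z.
\]
For $b\in W^{1,q}_{\mathrm{loc}}$ one has the representation $\tfrac{b(x)-b(x-\varepsilon z)}{\varepsilon}=\int_0^1 \nabla b(x-(1-t)\varepsilon z)\cdot z\,\mathrm{d}t$ (valid after a standard smoothing argument). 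Plugging in and taking the pointwise a.e. limit as $\varepsilon\to 0$ gives the candidate limit
\[
a(x)\,\partial_j b(x)\int_{\mathbb{R}^d}z_j\,\partial_i\omega(z)\,\mathrm{d}z=-a(x)\,\partial_i b(x),
\]
where the final equality comes from an integration by parts combined with $\int \omega=1$. Adding the two limits then yields $r_\varepsilon(a,b)\to 0$ almost everywhere.

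The main obstacle is upgrading a.e.\ convergence to $L^s_{\mathrm{loc}}$ convergence, which requires a dominated-convergence argument uniform in $\varepsilon$. For this I would fix a compact $K$ and a slightly larger compact $K'$ containing $K+\mathrm{supp}\,\omega$. Using $z\,\partial_i\omega(z)\in L^\infty$ with compact support in $B(0,1)$, Hölder's inequality bounds the integrand pointwise (for $x\in K$, $z\in B(0,1)$, $t\in[0,1]$) by an expression whose $L^s(K)$-norm is controlled by $\|a(\cdot-\varepsilon z)\|_{L^\beta(K')}\,\|\nabla b(\cdot-(1-t)\varepsilon z)\|_{L^q(K')}$, which is uniformly bounded via $L^p$-continuity of translations. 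This furnishes a dominating function on the compact parameter set, and one concludes by the dominated convergence theorem applied on $\mathbb{R}^d\times[0,1]$ valued in $L^s(K)$. Since $K$ was arbitrary, this gives convergence in $L^s_{\mathrm{loc}}(\mathbb{R}^d)$ and completes the proof.
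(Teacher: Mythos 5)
Your proof is correct, and it is essentially the standard Friedrichs/DiPerna--Lions commutator argument: the paper itself does not prove this proposition but defers to Lemma 3.1 of Novotn\'y--Stra\v{s}kraba, which proceeds by the same decomposition $r_\varepsilon = I_\varepsilon + a_\varepsilon\partial_i b$, the difference-quotient representation of $(b(x)-b(x-\varepsilon z))/\varepsilon$, and the identity $\int z_j\partial_i\omega\,\mathrm{d}z=-\delta_{ij}$. The only cosmetic point is that the "pointwise a.e.\ limit" you invoke mid-proof to identify the candidate limit is not literally available for $a\in L^\beta_{loc}$; but your final paragraph correctly replaces it by $L^s(K)$-valued convergence for each fixed $(z,t)$ (via $L^p$-continuity of translations) dominated on the compact parameter set, which is exactly what is needed.
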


Next, we recall the definition and basic properties of the so-called Bogovskii
operator. We cite the precise form that can be found in
\cite{NovotnyStraskraba2004} Lemma $3.17,$ page $169.$

\begin{proposition}
\label{Bogo}Let $\Omega\subset\mathbb{R}^{d}$ a bounded Lipschitz domain.
There exists a linear operator $\mathcal{B}=\left(  \mathcal{B}^{1}%
,\mathcal{B}^{2},...\mathcal{B}^{d}\right)  $ with the following properties:%
\[
\left\{
\begin{array}
[c]{l}%
\mathcal{B}:\left\{  f\in L^{p}\left(  \Omega\right)  :\int_{\Omega}f\left(
x\right)  \mathrm{d}x=0\right\}  \rightarrow W_{0}^{1,p}\left(  \Omega\right)
,\text{ }1<p<\infty,\\
\operatorname{div}\mathcal{B}\left(  f\right)  =f\text{ a.e. in }\Omega,\\
\left\Vert \nabla\mathcal{B}\left(  f\right)  \right\Vert _{W_{0}^{1,p}}\leq
C\left(  p,\Omega\right)  \left\Vert f\right\Vert _{L^{p}}.
\end{array}
\right.
\]

\end{proposition}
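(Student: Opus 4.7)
The plan is to prove this classical result of Bogovskii in two stages: first construct the operator explicitly on a domain star-shaped with respect to a ball, then patch together via a partition of unity to handle general Lipschitz $\Omega$.

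For the star-shaped case, I would fix a ball $B \Subset \Omega$ with respect to which $\Omega$ is star-shaped, and a smooth cutoff $\omega \in \mathcal{C}_c^\infty(B)$ with $\int \omega = 1$. Then, for $f \in L^p(\Omega)$ with $\int_\Omega f = 0$, I define the candidate by Bogovskii's explicit formula
\[
\mathcal{B}(f)(x) = \int_\Omega f(y)\, \frac{x-y}{|x-y|^d}\int_{|x-y|}^{\infty} \omega\!\left(y + \xi\,\tfrac{x-y}{|x-y|}\right)\xi^{d-1}\, d\xi\, dy,
\]
understood componentwise. A direct (if tedious) change of variables shows that $\operatorname{div}_x \mathcal{B}(f)(x) = f(x) - \omega(x) \int_\Omega f(y)\,dy$, which by the mean-zero hypothesis reduces to $f(x)$. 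The boundary vanishing $\mathcal{B}(f) = 0$ on $\partial\Omega$ follows from the support properties of the inner integral together with the star-shapedness of $\Omega$ (the ray from $y$ exits $\Omega$ before reaching the support of $\omega$ for $x$ near $\partial\Omega$ suitably).

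The crucial and hardest step is the bound $\|\nabla \mathcal{B}(f)\|_{L^p} \le C\|f\|_{L^p}$. I would differentiate under the integral and decompose $\partial_j \mathcal{B}^i(f)$ as
\[
\partial_j \mathcal{B}^i(f)(x) = \mathrm{p.v.}\!\int_\Omega K_{ij}(x,x-y)\,f(y)\,dy \;+\; R_{ij}(f)(x),
\]
where $K_{ij}(x,z)$ is homogeneous of degree $-d$ in $z$, smooth away from $z=0$, and has vanishing mean over the sphere in $z$ (the Calderón--Zygmund cancellation condition), while $R_{ij}$ has a weakly singular kernel of order $-(d-1)$ and is therefore easy to bound. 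The $L^p$ boundedness of the principal-value operator then follows from the Calderón--Zygmund theorem combined with the T(1) criterion applied to the $x$-dependence of the kernel; this is the step I expect to be the main obstacle, since one must verify the size/regularity estimates on $K_{ij}$ carefully and exploit the compact support of $\omega$ to reduce everything to classical singular-integral bounds on $\mathbb{R}^d$.

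For a general bounded Lipschitz $\Omega$, I would use that one can cover $\overline{\Omega}$ by finitely many open sets $U_1,\dots,U_N$ such that each $\Omega \cap U_i$ is star-shaped with respect to some ball $B_i \Subset \Omega \cap U_i$. Choose a partition of unity $\{\chi_i\}$ subordinate to $\{U_i\}$, set $f_i = \chi_i f$, and note that $\int_{\Omega \cap U_i} f_i$ is in general not zero. To correct this, I would inductively absorb the resulting masses by using $\mathcal{B}_j$ on a neighbouring piece: if $U_i \cap U_{i+1} \neq \emptyset$, pick a fixed $\rho_i \in \mathcal{C}_c^\infty(U_i \cap U_{i+1})$ with $\int \rho_i = 1$ and replace $f_i$ by $f_i - (\int f_i)\rho_i$ while adding $(\int f_i)\rho_i$ to $f_{i+1}$; iterating this produces a decomposition $f = \sum \tilde f_i$ with $\tilde f_i \in L^p(\Omega \cap U_i)$ of mean zero and $\|\tilde f_i\|_{L^p} \le C\|f\|_{L^p}$. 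Finally $\mathcal{B}(f) := \sum_i \mathcal{B}_i(\tilde f_i)$ (each extended by zero) yields the required operator, with the $W_0^{1,p}$-estimate and divergence identity inherited from the star-shaped case.
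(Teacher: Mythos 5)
The paper does not actually prove this proposition: it is quoted as a known result from Novotn\'y--Stra\v{s}kraba (Lemma 3.17, p.~169), and your outline is precisely the standard proof of Bogovskii's lemma found there and in Galdi's book --- explicit kernel on a domain star-shaped with respect to a ball, Calder\'on--Zygmund estimate for the gradient, then decomposition of the Lipschitz domain into finitely many star-shaped pieces with a mean-zero splitting of $f$ --- so it is essentially the same approach as the paper's source. The only steps that deserve explicit care in a full write-up are (i) the divergence identity and the differentiation under the integral sign, which should first be established for $f\in\mathcal{C}_c^\infty(\Omega)$ and then extended by density using the $W^{1,p}$ bound; (ii) the verification of the cancellation $\int_{\mathbb{S}^{d-1}}K_{ij}(x,\theta)\,\mathrm{d}\theta=0$ together with the uniform-in-$x$ size and smoothness bounds required by the variable-kernel Calder\'on--Zygmund theorem (the classical Calder\'on--Zygmund variable-kernel result suffices here; $T(1)$ is not needed); and (iii) in the patching step, ordering the covering $\{U_i\}$ as a chain of consecutively overlapping sets (possible since $\Omega$ is connected) so that the mass-transfer functions $\rho_i$ can be supported in $\Omega\cap U_i\cap U_{i+1}$ and the last piece automatically has zero mean because $\int_\Omega f=0$.
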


We will also use see the following variant of Lemma $4.27$, page $211$ from
\cite{NovotnyStraskraba2004}:

\begin{proposition}
\label{caldura}Let $\Omega$ be a $C^{2}$ bounded domain, $p\in\left(
1,\infty\right)  ,M>0$ and $b\in\left(  L^{p}\left(  \Omega\right)  \right)
^{d}$. The equation
\[
\left\{
\begin{array}
[c]{c}%
-\varepsilon\Delta\varrho=\operatorname{div}b\text{ in }\Omega,\\
\partial_{n}\varrho=b\cdot\overrightarrow{n}\text{ on }\partial\Omega,\\
\int_{\Omega}\varrho\mathrm{d}x=M,
\end{array}
\right.
\]
admits an unique weak solution $\varrho\in W^{1,p}\left(  \Omega\right)  $ and
the estimate%
\[
\varepsilon\left\Vert \nabla\varrho\right\Vert _{L^{p}}\leq C\left(
p,\Omega\right)  \left\Vert b\right\Vert _{L^{p}}.
\]

Moreover, if $b\in E_{0}^{p}\left(  \Omega\right)  =\overline{\left(
\mathcal{D}\left(  \Omega\right)  \right)  ^{d}}^{\left\Vert \cdot\right\Vert
_{E^{p}}}$ where $\left\Vert b\right\Vert _{E^{p}}=\left\Vert b\right\Vert
_{L^{p}}+\left\Vert \operatorname{div}b\right\Vert _{L^{p}}$ then, moreover%
\[
\varepsilon\left\Vert \nabla^{2}\rho\right\Vert _{L^{p}}\leq C\left(
p,\Omega\right)  \left(  \left\Vert b\right\Vert _{L^{p}}+\left\Vert
\operatorname{div}b\right\Vert _{L^{p}}\right)  .
\]

\end{proposition}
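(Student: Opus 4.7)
The plan is to treat Proposition \ref{caldura} as a standard inhomogeneous Neumann problem for the Poisson equation with divergence-form right-hand side, and to invoke classical $L^p$ elliptic theory on the bounded $C^2$ domain $\Omega$. The natural weak formulation (after integrating by parts and absorbing the boundary condition) is: find $\varrho \in W^{1,p}(\Omega)$ with $\int_\Omega \varrho\,\mathrm{d}x = M$ such that
\[
\varepsilon\int_\Omega \nabla\varrho\cdot\nabla\varphi\,\mathrm{d}x = -\int_\Omega b\cdot\nabla\varphi\,\mathrm{d}x\qquad\text{for all }\varphi\in W^{1,p'}(\Omega).
\]
No compatibility condition on the data appears, since the right-hand side is already in divergence form.

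For existence and the first estimate, I would begin with the Hilbert case $p=2$: Lax-Milgram on the closed affine subspace $\{v\in H^{1}(\Omega):\int v\,\mathrm{d}x=M\}$ produces a unique $\varrho$, and taking $\varphi=\varrho-M/|\Omega|$ directly gives $\varepsilon\|\nabla\varrho\|_{L^2}\leq\|b\|_{L^2}$ via Cauchy-Schwarz and Poincaré-Wirtinger. For general $p\in(1,\infty)$, I would invoke the classical Agmon-Douglis-Nirenberg $L^p$ theory for the Neumann Laplacian on a bounded $C^2$ domain: the solution operator sending divergence-form data $\operatorname{div}b$ with $b\in L^p(\Omega)$ to the mean-zero solution is bounded from $L^p(\Omega)$ into $W^{1,p}(\Omega)/\mathbb{R}$, which yields both existence and the inequality $\varepsilon\|\nabla\varrho\|_{L^p}\leq C(p,\Omega)\|b\|_{L^p}$. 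The mean constraint then fixes the additive constant, and uniqueness in $W^{1,p}$ follows because the difference of two solutions lies in the kernel, which is reduced to constants by the same elliptic regularity.

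For the second estimate, the key observation is that $b\in E_0^p(\Omega)$ means $b$ is an $E^p$-limit of $\mathcal{D}(\Omega)$ fields, so the normal trace $b\cdot\vec{n}$ vanishes on $\partial\Omega$ in the relevant duality. In particular, for every $\varphi\in W^{1,p'}(\Omega)$ one has $\int_\Omega b\cdot\nabla\varphi\,\mathrm{d}x = -\int_\Omega(\operatorname{div}b)\varphi\,\mathrm{d}x$, so the weak formulation rewrites as
\[
\varepsilon\int_\Omega \nabla\varrho\cdot\nabla\varphi\,\mathrm{d}x = \int_\Omega(\operatorname{div}b)\varphi\,\mathrm{d}x,
\]
that is, $-\varepsilon\Delta\varrho=\operatorname{div}b\in L^p(\Omega)$ in $\Omega$ together with the \emph{homogeneous} Neumann condition $\partial_n\varrho=0$ on $\partial\Omega$. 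Applying the classical $W^{2,p}$ regularity for the Neumann Laplacian on a $C^2$ domain then yields
\[
\varepsilon\|\nabla^{2}\varrho\|_{L^p}\leq C(p,\Omega)\bigl(\|\operatorname{div}b\|_{L^p}+\varepsilon\|\varrho-\overline{\varrho}\|_{L^p}\bigr),
\]
and the last term is absorbed, via Poincaré-Wirtinger and the first estimate, into $C(p,\Omega)(\|b\|_{L^p}+\|\operatorname{div}b\|_{L^p})$.

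The only nontrivial ingredient is the classical $L^p$-elliptic regularity for the Neumann problem on a bounded $C^2$ domain, both in the $W^{1,p}$ form (for the first estimate) and in the $W^{2,p}$ form (for the second); both are standard Agmon-Douglis-Nirenberg statements. The rest of the argument is reduction to the correct weak formulation, absorption of lower-order terms, and a density argument in $E_0^p$ to justify the vanishing normal trace. I would expect the main delicate point to be the justification of $b\cdot\vec{n}=0$ in the relevant trace sense and the corresponding rewriting of the boundary term; once that is in place, the proof is essentially bookkeeping around the cited elliptic results.
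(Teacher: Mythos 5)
Your proposal is correct and is essentially the argument behind the paper's own treatment: the paper gives no proof, simply citing the variant of Lemma 4.27 in Novotn\'y--Stra\v{s}kraba, whose proof is exactly this reduction to the divergence-form weak Neumann problem, the classical $L^p$ elliptic theory for the $W^{1,p}$ bound, and the $E_0^p$ density argument turning the problem into a homogeneous Neumann problem with right-hand side $\operatorname{div}b\in L^p$ for the $W^{2,p}$ bound. The only cosmetic remark is that the boundary condition as written should be read as $\varepsilon\partial_n\varrho=b\cdot\overrightarrow{n}$ in the weak (normal-trace) sense, which is precisely the interpretation your weak formulation encodes and the one used in the paper's applications where $b\cdot\overrightarrow{n}=0$.
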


Regarding the continuity equation we have the following

\begin{proposition}
\label{prolonged_continuity}Let $d\geq2$ and let $K$ a compact subset of a
bounded Lipschitz domain $\Omega\subset\mathbb{R}^{d}$. Let $\varrho\in
L^{(q^{\ast})^{\prime}}\left(  \Omega\right)  \cap L^{q^{\prime}}\left(
\Omega\backslash K\right)  $, $u\in(W_{0}^{1,q}\left(  \Omega\right)  )^{d}$
and $f\in L^{1}\left(  \Omega\right)  $. Suppose that%
\[
\operatorname{div}\left(  \varrho u\right)  =f\text{ in }\mathcal{D}^{\prime
}\left(  \Omega\right)  .
\]
Then, prolonging $\varrho,u$ and $f$ by $0$ outside $\Omega$ and, by slightly
abusing the notation, denoting the function again by $\varrho,u$ and $f$ we
have that%
\[
\operatorname{div}\left(  \varrho u\right)  =f\text{ in }\mathcal{D}^{\prime
}\left(  \mathbb{R}^{d}\right)  \text{.}%
\]

\end{proposition}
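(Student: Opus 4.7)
}

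The plan is to reduce the claim to the known identity $\int_\Omega \varrho u \cdot \nabla \varphi \;\mathrm{d}x = -\int_\Omega f\varphi\;\mathrm{d}x$ for $\varphi \in \mathcal{D}(\Omega)$, and then to extend this identity to arbitrary test functions $\varphi \in \mathcal{D}(\mathbb{R}^d)$ via a boundary cutoff argument. The extensions by zero satisfy
\[
-\int_{\mathbb{R}^d}\tilde{\varrho}\tilde{u}\cdot\nabla\varphi\;\mathrm{d}x-\int_{\mathbb{R}^d}\tilde{f}\varphi\;\mathrm{d}x= -\int_\Omega\varrho u\cdot\nabla\varphi\;\mathrm{d}x-\int_\Omega f\varphi\;\mathrm{d}x,
\]
so the task is to show that the right-hand side vanishes for every $\varphi\in\mathcal{D}(\mathbb{R}^d)$.

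First, I would introduce a family of cutoffs $\psi_\epsilon\in\mathcal{C}_c^\infty(\Omega)$ with $0\le\psi_\epsilon\le 1$, $\psi_\epsilon=1$ on $\Omega\setminus V_\epsilon$, $\psi_\epsilon=0$ in an $\epsilon/2$-neighborhood of $\partial\Omega$, and $|\nabla\psi_\epsilon|\le C/\epsilon$ supported in the tubular neighborhood $V_\epsilon=\{x\in\Omega:\operatorname{dist}(x,\partial\Omega)<\epsilon\}$. For $\epsilon$ small, $V_\epsilon\subset\Omega\setminus K$, which is crucial because on this set one has the additional integrability $\varrho\in L^{q'}$. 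Plugging $\psi_\epsilon\varphi\in\mathcal{D}(\Omega)$ into the given distributional equation yields
\begin{equation*}
-\int_\Omega\varrho u\cdot\nabla\varphi\;\psi_\epsilon\;\mathrm{d}x-\int_\Omega\varrho u\cdot\nabla\psi_\epsilon\;\varphi\;\mathrm{d}x=\int_\Omega f\psi_\epsilon\varphi\;\mathrm{d}x.
\end{equation*}

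Next, I would pass to the limit $\epsilon\to 0$. The two extreme terms converge respectively to $-\int_\Omega\varrho u\cdot\nabla\varphi\;\mathrm{d}x$ and $\int_\Omega f\varphi\;\mathrm{d}x$ by dominated convergence: $\varrho u\in L^1(\Omega)$ because $\varrho\in L^{(q^*)'}$ and $u\in L^{q^*}$ by the Sobolev embedding of $W_0^{1,q}$, while $f\in L^1(\Omega)$ by hypothesis. The conclusion will follow once the boundary commutator term $I_\epsilon:=\int_\Omega \varrho\, \varphi\, u\cdot\nabla\psi_\epsilon\;\mathrm{d}x$ is shown to vanish in the limit.

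The main obstacle, and the only nontrivial step, is precisely this vanishing. Here I would use the Hardy-type inequality $\|u/\operatorname{dist}(\cdot,\partial\Omega)\|_{L^q(\Omega)}\le C\|\nabla u\|_{L^q(\Omega)}$ valid for $u\in W_0^{1,q}(\Omega)$ on a bounded Lipschitz domain. Since $|\nabla\psi_\epsilon|\le C\,\mathbf{1}_{V_\epsilon}/\epsilon\le C\mathbf{1}_{V_\epsilon}/\operatorname{dist}(\cdot,\partial\Omega)$, Hölder's inequality on $V_\epsilon\subset\Omega\setminus K$ gives
\begin{equation*}
|I_\epsilon|\le \|\varphi\|_{L^\infty}\,\|\varrho\|_{L^{q'}(\Omega\setminus K)}\,\left\|\frac{u}{\operatorname{dist}(\cdot,\partial\Omega)}\right\|_{L^q(V_\epsilon)}.
\end{equation*}
The Hardy bound controls the last factor uniformly in $\epsilon$, and since $|V_\epsilon|\to 0$, dominated convergence forces $\|u/\operatorname{dist}(\cdot,\partial\Omega)\|_{L^q(V_\epsilon)}\to 0$. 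Therefore $I_\epsilon\to 0$, which closes the argument and delivers the extended equation in $\mathcal{D}'(\mathbb{R}^d)$.
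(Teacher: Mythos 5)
Your argument is correct and is essentially the standard proof: the paper itself only cites Lemma 3.2 of Novotn\'y--Stra\v{s}kraba, whose argument is exactly this boundary-cutoff scheme, with the commutator term killed by the smallness of $u\in W_0^{1,q}$ near $\partial\Omega$ (there via the estimate $\|u\|_{L^q(V_\epsilon)}\leq C\epsilon\|\nabla u\|_{L^q}$, which is interchangeable with the Hardy inequality you invoke, valid on bounded Lipschitz domains for $q>1$, as holds here since $q=r>1$). So this matches the paper's (referenced) proof in all essentials.
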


One also has the following:

\begin{proposition}
\label{Prop_ren2}Let $d\geq2$ and consider $2\leq\beta<\infty$ and
$\lambda_{0},\lambda_{1}$ such that
\[
0\leq1+\lambda_{1}\leq\dfrac{(r-1)\beta}{r}%
\]
Consider $\varrho\in L^{\beta}\left(  \mathbb{R}^{d}\right)  $, $\varrho\geq0$
a.e. and $u\in W^{1,r}\left(  \mathbb{R}^{d}\right)  $ verifying the steady
continuity equation%
\[
\operatorname{div}\left(  \varrho u\right)  =0
\]
in the sense of distributions. Then, for any function $b\in\mathcal{C}%
^{1}\left(  [0,\infty\right)  )$ such that%
\[
\left\vert b^{\prime}\left(  t\right)  \right\vert \leq ct^{\lambda_{1}}\text{
for }t\geq1
\]
it holds that%
\begin{equation}
\operatorname{div}\left(  b\left(  \varrho\right)  u\right)  +\left\{  \varrho
b^{\prime}\left(  \varrho\right)  -b\left(  \varrho\right)  \right\}
\operatorname{div}u=0. \label{renorm}%
\end{equation}
in the sense of distributions.
\end{proposition}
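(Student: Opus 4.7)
The plan is to follow the classical DiPerna--Lions regularization scheme adapted to the steady setting. Let $\omega_\varepsilon$ denote the standard mollifier from (\ref{notation_approx}) and set $\varrho_\varepsilon := \omega_\varepsilon \ast \varrho$. First, convolving the equation $\operatorname{div}(\varrho u)=0$ with $\omega_\varepsilon$ and rearranging gives
\begin{equation*}
\operatorname{div}(\varrho_\varepsilon u) = r_\varepsilon, \qquad r_\varepsilon := \operatorname{div}(\varrho_\varepsilon u) - \omega_\varepsilon \ast \operatorname{div}(\varrho u).
\end{equation*}
Applying Proposition \ref{Prop_ren1} componentwise with $a=\varrho \in L^\beta_{\mathrm{loc}}$ and $b=u^i \in W^{1,r}_{\mathrm{loc}}$ yields $r_\varepsilon \to 0$ in $L^s_{\mathrm{loc}}(\mathbb{R}^d)$, where $\tfrac{1}{s}=\tfrac{1}{\beta}+\tfrac{1}{r}$.

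Next, since $\varrho_\varepsilon$ is smooth, the classical chain rule gives pointwise
\begin{equation*}
\operatorname{div}(b(\varrho_\varepsilon)\,u) + \{\varrho_\varepsilon b'(\varrho_\varepsilon) - b(\varrho_\varepsilon)\}\operatorname{div} u = b'(\varrho_\varepsilon)\, r_\varepsilon.
\end{equation*}
I would then pass to the limit $\varepsilon \to 0$ in $\mathcal{D}'(\mathbb{R}^d)$. Extracting a subsequence so that $\varrho_\varepsilon \to \varrho$ almost everywhere and in $L^\beta_{\mathrm{loc}}$, the growth assumption $|b'(t)| \le c\, t^{\lambda_1}$ for $t\ge 1$ combined with continuity of $b$ implies the pointwise bounds $|b(\varrho_\varepsilon)| + |\varrho_\varepsilon b'(\varrho_\varepsilon)| \le C(1+\varrho_\varepsilon^{1+\lambda_1})$. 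Since $1+\lambda_1 \le (r-1)\beta/r < \beta$, the sequences $b(\varrho_\varepsilon)$ and $\varrho_\varepsilon b'(\varrho_\varepsilon)$ are equi-integrable in $L^{\beta/(1+\lambda_1)}_{\mathrm{loc}}$, so Vitali's theorem gives strong convergence to $b(\varrho)$ and $\varrho b'(\varrho)$ respectively. Pairing with $u$ and $\operatorname{div}u \in L^r$, the inequality $(1+\lambda_1)/\beta+1/r\le 1$ ensures that the two left-hand side terms converge in $L^1_{\mathrm{loc}}$ to the corresponding expressions in (\ref{renorm}).

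The main obstacle is to show that the right-hand side $b'(\varrho_\varepsilon)r_\varepsilon$ tends to zero. The growth on $b'$ furnishes the uniform bound $\|b'(\varrho_\varepsilon)\|_{L^{\beta/\lambda_1}_{\mathrm{loc}}}\le C$ (understood as $L^\infty$ when $\lambda_1=0$), while $r_\varepsilon \to 0$ in $L^s_{\mathrm{loc}}$. By Hölder's inequality, $b'(\varrho_\varepsilon)\, r_\varepsilon \to 0$ in $L^p_{\mathrm{loc}}$ with
\begin{equation*}
\frac{1}{p} = \frac{\lambda_1}{\beta} + \frac{1}{s} = \frac{1+\lambda_1}{\beta} + \frac{1}{r}.
\end{equation*}
The hypothesis $1+\lambda_1 \le (r-1)\beta/r$ is precisely equivalent to $p\ge 1$, so $b'(\varrho_\varepsilon)\, r_\varepsilon \to 0$ in $L^1_{\mathrm{loc}}$, and a fortiori in $\mathcal{D}'(\mathbb{R}^d)$. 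Passing to the limit in the mollified chain rule identity then yields (\ref{renorm}).
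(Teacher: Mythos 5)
Your argument is correct and is essentially the same proof the paper relies on: the paper proves Proposition \ref{Prop_ren2} by adapting Lemmas 3.1--3.3 of Novotn\'y--Stra\v{s}kraba, i.e.\ precisely the DiPerna--Lions mollification scheme with the Friedrichs commutator lemma (Proposition \ref{Prop_ren1}), the chain rule for the smooth $\varrho_\varepsilon$, and a H\"older/Vitali limit passage in which the hypothesis $1+\lambda_1\le (r-1)\beta/r$ guarantees local integrability of all products, exactly as you use it.
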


\noindent The proof of the above results follow by adapting in a
straightforward manner lemmas $3.1.,3.2.$ and $3.3.$ from the book of A.
Novotn\'{y}- I.Stra\v{s}kraba \cite{NovotnyStraskraba2004} pages $155-163$.

We end up this section with the following theorem that will be used to prove
existence of solutions:

\begin{theorem}
[Schauder-Leray]\label{Schauder_Leray}Let $\mathcal{T}$ be a continuous
compact mapping of a Banach space $\mathcal{B}$ into itself with the property
that there exists a real positive number $M>0$ such that%
\[
\left\Vert x\right\Vert _{\mathcal{B}}\leq M,
\]
for all $x$ such that $x=\lambda\mathcal{T}x$ for some $\lambda\in\left[
0,1\right]  $. Then $\mathcal{T}$ admits a fixed point.
\end{theorem}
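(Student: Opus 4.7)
The plan is to reduce this to the classical Schauder fixed-point theorem---every continuous compact self-map of a closed, bounded, convex subset of a Banach space admits a fixed point---by composing $\mathcal{T}$ with a radial retraction onto a sufficiently large closed ball; the a priori bound hypothesis will be invoked only at the very end in order to rule out a boundary pathology.

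First, I would fix a radius $R>M$ (say $R=2M$) and set $\overline{B}_R:=\{x\in\mathcal{B}:\|x\|_{\mathcal{B}}\le R\}$. Define the continuous radial retraction $r:\mathcal{B}\to\overline{B}_R$ by $r(x)=x$ whenever $\|x\|_{\mathcal{B}}\le R$ and $r(x)=Rx/\|x\|_{\mathcal{B}}$ otherwise, and introduce the operator $\mathcal{S}:=r\circ\mathcal{T}$ viewed as a map from $\overline{B}_R$ into itself. I would then verify that $\mathcal{S}$ is continuous (composition of continuous maps) and compact: because $\mathcal{T}$ is compact, the set $\mathcal{T}(\overline{B}_R)$ is relatively compact in $\mathcal{B}$, and $r$ being continuous sends relatively compact sets to relatively compact sets, so $\mathcal{S}(\overline{B}_R)=r(\mathcal{T}(\overline{B}_R))$ is relatively compact in $\overline{B}_R$. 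Since $\overline{B}_R$ is closed, bounded and convex, Schauder's theorem will then produce a fixed point $x_\ast\in\overline{B}_R$ satisfying $x_\ast=r(\mathcal{T}(x_\ast))$.

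Next I would upgrade $x_\ast$ to a genuine fixed point of $\mathcal{T}$. In the case $\|\mathcal{T}(x_\ast)\|_{\mathcal{B}}\le R$, the retraction acts as the identity on $\mathcal{T}(x_\ast)$ and $x_\ast=\mathcal{T}(x_\ast)$ immediately. In the opposite case $\|\mathcal{T}(x_\ast)\|_{\mathcal{B}}>R$, the definition of $r$ gives $x_\ast=\lambda\,\mathcal{T}(x_\ast)$ with $\lambda:=R/\|\mathcal{T}(x_\ast)\|_{\mathcal{B}}\in(0,1)$; the a priori hypothesis then forces $\|x_\ast\|_{\mathcal{B}}\le M$, while a direct computation yields $\|x_\ast\|_{\mathcal{B}}=\lambda\,\|\mathcal{T}(x_\ast)\|_{\mathcal{B}}=R>M$, a contradiction. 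Therefore only the first case can occur and $x_\ast$ is a fixed point of $\mathcal{T}$.

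There is no serious obstacle here: the entire argument is essentially bookkeeping around Schauder's theorem, which explains why the paper presents this statement as a cited black-box tool rather than proving it. The only mildly delicate point is checking that the composition $r\circ\mathcal{T}$ inherits compactness from $\mathcal{T}$, which follows from the elementary fact that a continuous map sends relatively compact sets to relatively compact sets.
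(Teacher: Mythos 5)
Your argument is correct. The paper itself gives no proof of this statement---it is used as a black box with a citation to Theorem 11.3 of Gilbarg--Trudinger---and your retraction-plus-Schauder argument is precisely the standard proof found there, so there is nothing to compare beyond noting one small refinement: since the hypothesis here is the non-strict bound $\|x\|_{\mathcal{B}}\le M$, your choice of a strictly larger radius $R=2M$ is exactly what is needed to get the contradiction $\|x_\ast\|_{\mathcal{B}}=R>M$ in the second case (a retraction onto the ball of radius $M$ itself would not yield a contradiction), and all the remaining steps (continuity and compactness of $r\circ\mathcal{T}$, convexity and closedness of $\overline{B}_R$, the case distinction at the fixed point) are verified correctly.
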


\noindent For a proof of this result see Theorem $11.3.$ page $280$ from
\cite{GilbargTrudinger1977}.

\end{appendices}

\bibliographystyle{acm}
\bibliography{BibliotecaGenerala}

\end{document}